\documentclass[onefignum,onetabnum]{siamart190516}

\usepackage{color}
\usepackage{mathdots}
\usepackage{amsmath,amssymb,amsfonts}
\usepackage{textcomp}
\usepackage{wrapfig}
\usepackage{fancybox}
\usepackage{pgf,tikz}
\usetikzlibrary{arrows}
\usepackage{booktabs}
\usepackage{hyperref}
\usepackage{csquotes}

\newcommand{\RR}{\mathbb{R}}
\newcommand{\NN}{\mathbb{N}}


\usepackage{lipsum}
\usepackage{amsfonts}
\usepackage{graphicx,epstopdf} 
\usepackage[caption=false]{subfig} 
\usepackage{algorithmic}
\ifpdf
\DeclareGraphicsExtensions{.eps,.pdf,.png,.jpg}
\else
\DeclareGraphicsExtensions{.eps}
\fi


\newsiamremark{remark}{Remark}
\newsiamremark{hypothesis}{Hypothesis}
\crefname{hypothesis}{Hypothesis}{Hypotheses}
\newsiamthm{claim}{Claim}

\headers{EPH curves}{L. Romani and A. Viscardi}

\title{Construction and evaluation of PH curves in exponential-polynomial spaces
	\thanks{\funding{This work was partially funded by INdAM-GNCS 2020 project \enquote{Interpolation and smoothing: theoretical, computational and applied aspects} (Prot. U-UFMBAZ-2020-000564).}}
}

\author{
	Lucia Romani\thanks{$AM^2$ - Dipartimento di Matematica, Alma Mater Studiorum Università di Bologna, Bologna, Italy (\email{lucia.romani@unibo.it})}
	\and Alberto Viscardi\thanks{Dipartimento di Matematica \enquote{Giuseppe Peano}, Università di Torino, Torino, Italy (\email{alberto.viscardi@unito.it})}
}

\usepackage{amsopn}


\ifpdf
\hypersetup{
	pdftitle={Construction and evaluation of PH curves in exponential-polynomial spaces},
	pdfauthor={L. Romani, A. Viscardi}
}
\fi

\begin{document}

\maketitle

\begin{abstract}
	In the past few decades polynomial curves with Pythagorean Hodograph (for short PH curves) have received considerable attention due to their usefulness in various CAD/CAM areas, manufacturing, numerical control machining and robotics.
	This work deals with classes of PH curves built-upon exponential-polynomial spaces (for short EPH curves). In particular, for the two most frequently encountered exponential-polynomial spaces, we first provide necessary and sufficient conditions to be satisfied by the control polygon of the B\'{e}zier-like curve in order to fulfill the PH property. Then, for such EPH curves, fundamental characteristics like parametric speed or arc length are discussed to show the interesting analogies with their well-known polynomial counterparts. Differences and advantages with respect to ordinary PH curves become commendable when discussing the solutions to application problems like the interpolation of first-order Hermite data.
	Finally, a new evaluation algorithm for EPH curves is proposed and shown to compare favorably with the celebrated de Casteljau-like algorithm and two recently proposed methods: Wo\'zny and Chudy's algorithm and the dynamic evaluation procedure by Yang and Hong.
\end{abstract}

\begin{keywords}
	exponential-polynomial curves, B-basis, evaluation, stability, pythagorean hodograph
\end{keywords}

\begin{AMS}
	65D17, 65D18, 65Y20 
\end{AMS}

\section{Introduction}
Ordinary polynomial curve segments with the Pythagorean-Hodograph (PH) property have been extensively studied \cite{Fbook}, and their construction has been satisfactorily extended also to spaces spanned by algebraic-trigonometric polynomials \cite{CS20,GAPL18,KKRV15,RM19,RSA14}. Although spaces spanned by algebraic-hyperbolic polynomials have close analogies with the ones spanned by algebraic-trigonometric polynomials (see \cref{sec2}), on the one hand they offer complementary solutions and, on the other hand, their handling might require some additional caution which is important to underline. 
Indeed, in the remainder of this manuscript we first show (see \cref{sec3,sec4}) that the constraints to be satisfied by the control points of the algebraic-hyperbolic B\'{e}zier curve segments in order to achieve the PH property, mimick very closely the necessary and sufficient conditions known in the polynomial and algebraic-trigonometric cases. In addition, also the computed expressions for their fundamental characteristics (parametric speed or arc length) sound to be very similar.
{But, when used in application contexts like interpolating $C^1$ Hermite data (see \cref{sec:Hermite}), algebraic-hyperbolic B\'{e}zier curves allow one to get regular curves without undesired loops or self-intersections, whose shapes differ from those achievable by means of algebraic-trigonometric  B\'{e}zier curves}. For instance, when considering the planar Hermite data of \cref{fig:PH}, none of the { four solutions (see \cite[Chapter 25]{Fbook})} provided by the ordinary polynomial PH quintics are free of loops.
{Instead, when the same Hermite problem is solved by using either algebraic-trigonometric PH (for short ATPH) curves or algebraic-hyperbolic PH (for short EPH) curves, for suitable choices of the free parameter (which both families are equipped with) several good solutions exist (see \cref{fig:ATPH_EPH}). The further advantage offered by EPH curves is shown in \cref{fig:reproduction}: when the $C^1$ Hermite data are sampled from some hyperbolic functions, then the EPH Hermite interpolant is able to reconstruct such functions exactly (similar to what ATPH curves do in the trigonometric case). These are of course practical reasons that motivate the study of EPH curves.\\
}
An additional reason that prompted us to investigate algebraic-hyperbolic PH curves arises from the observation that, even if the hyperbolic cosine and sine are just the opposite side of the exponential coin from the trigonometric cosine and sine, the normalized B-basis (also known as Chebyshevian Bernstein basis) of the underlying Extended Chebyshev (EC) space is known to be affected by numerical instability when large exponential shape parameters are selected \cite{Roth}. Thus, one of the main goals of this work is also to suggest a stable formulation of the normalized B-basis of the two exponential-polynomial spaces (or, more precisely, algebraic-hyperbolic spaces) that are most frequently encountered when working with non-polynomial PH curves, so that numerical instabilities are avoided. Furthermore, for such spaces, we aim at proposing a novel evaluation algorithm that is stable for a wide range of the exponential shape parameter, in contrast to the dynamic evaluation procedure in \cite{YH19}, and has a lower computational time (see \cref{sec:eval}), compared with the de Casteljau-like B-algorithm \cite{CP94,MP99,MP07,MP10} (analogue of the de Casteljau algorithm for classical polynomial B\'{e}zier curves), and with the algorithm introduced by Wo\'zny and Chudy in \cite{WC20}.  

\begin{figure}[h!]
	\begin{center}
		\hspace{-0.75cm}	\includegraphics[width=0.75\textwidth]{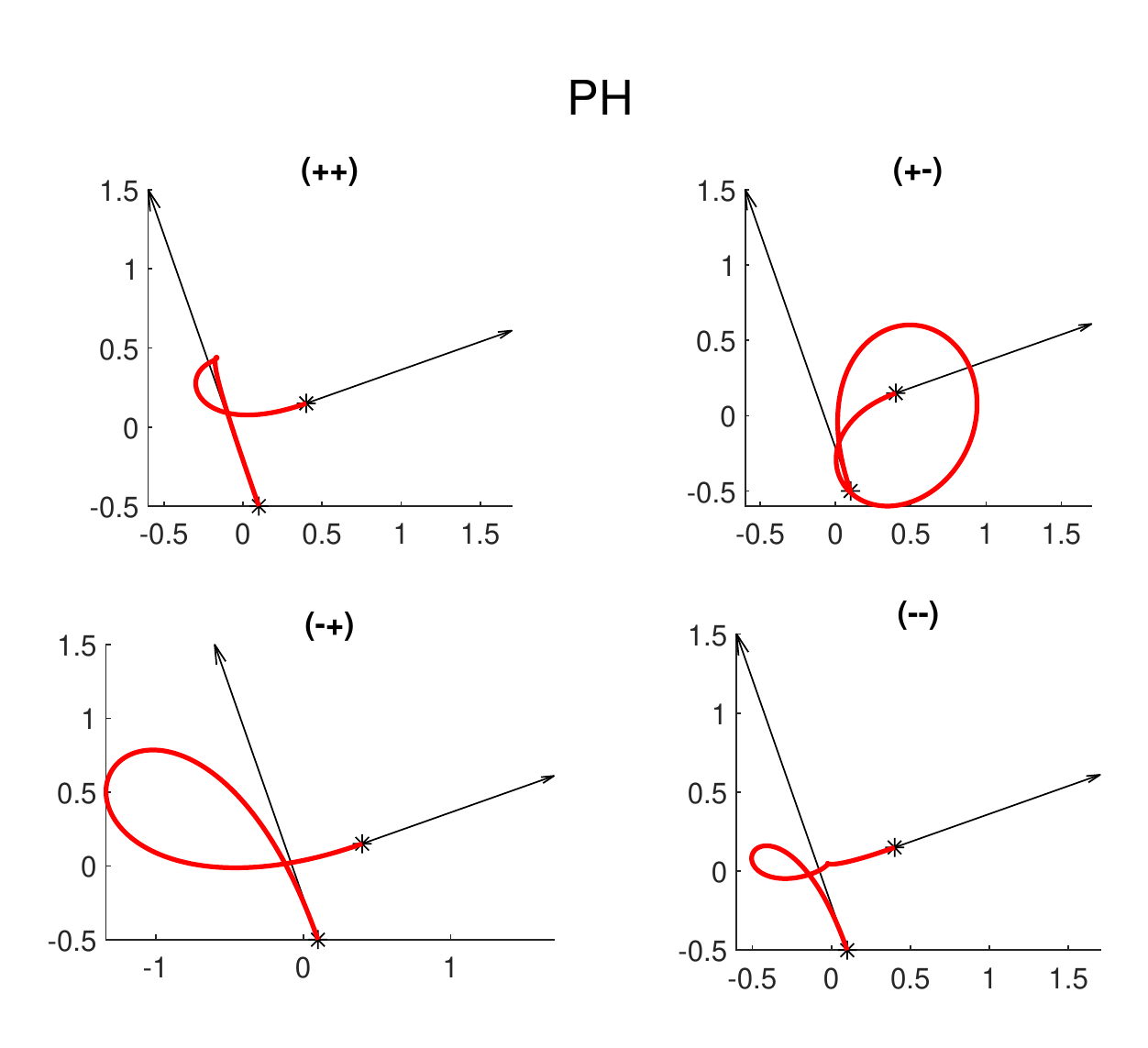}\\
	\end{center}
	\caption{{The four planar PH interpolants to the points $\mathbf{r}_0=(0.1,-0.5)$, $\mathbf{r}_5=(0.4,0.15)$ and associated first derivatives $\mathbf{d}_i=(-3.5,10)$, $\mathbf{d}_f=(6.5,2.3)$ (here plotted with a scale factor of $1/5$ to fit into the picture). As for the meaning of the notation $++$, $+-$, $-+$, $--$ the reader can consult \cite[Chapter 25]{Fbook}.}}
	\label{fig:PH}
\end{figure}

\begin{figure}[h!]
	\begin{center}
		$ $\\
		\hspace{-0.75cm}\includegraphics[width=0.4\textwidth]{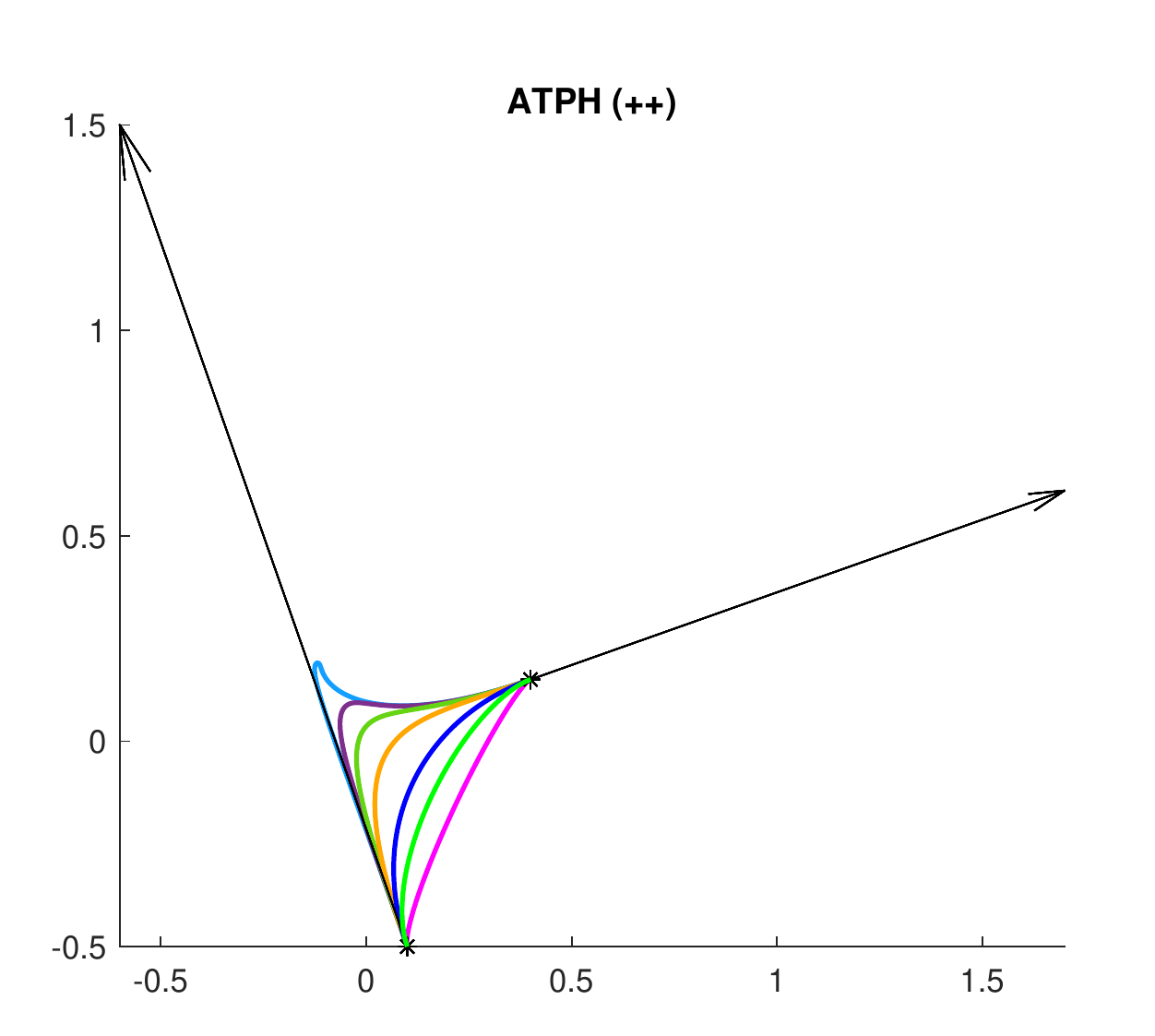}
		\includegraphics[width=0.4\textwidth]{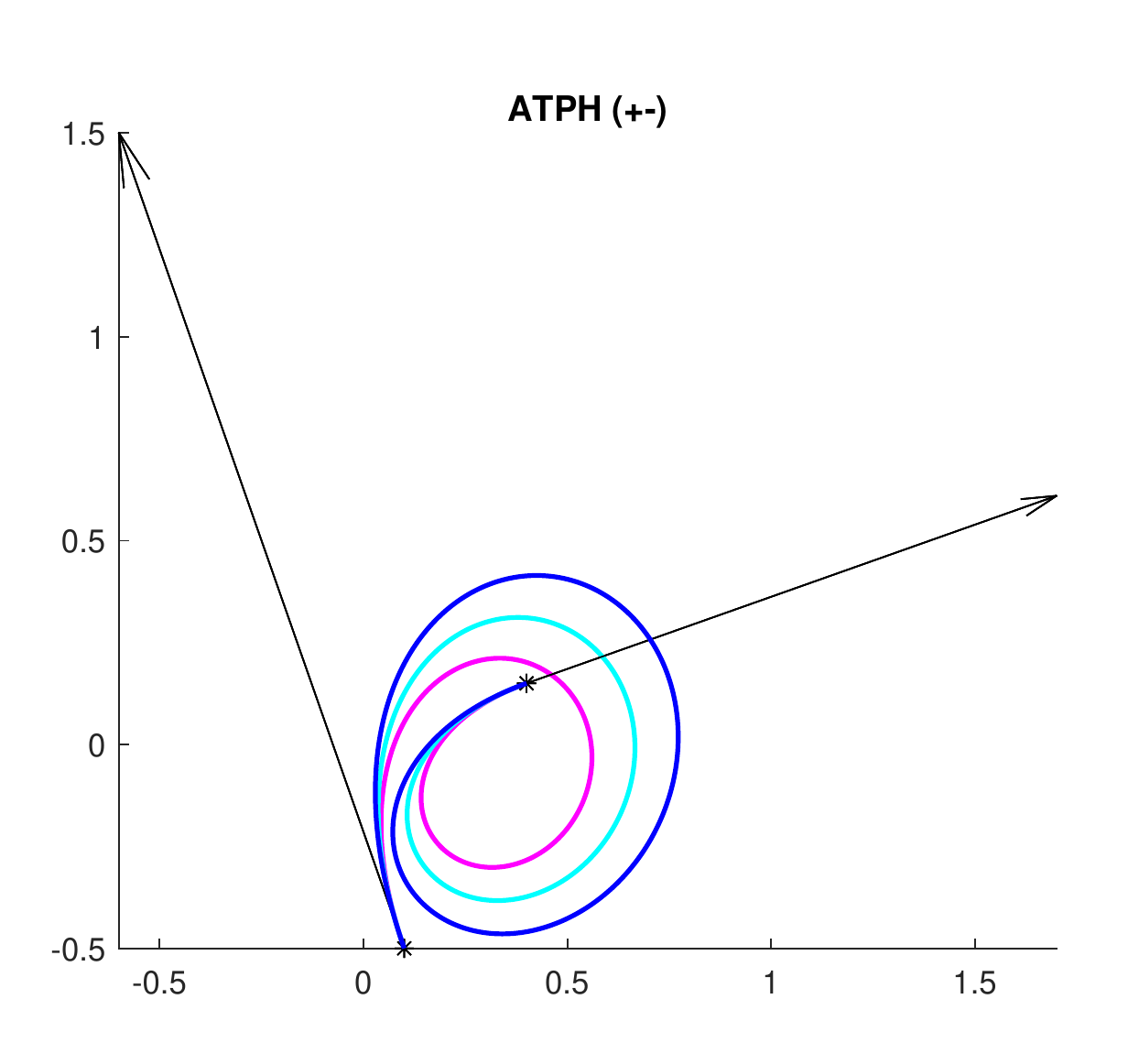}	
		\\
		$ $\\ 
		\hspace{-0.75cm}%
		\includegraphics[width=0.4\textwidth]{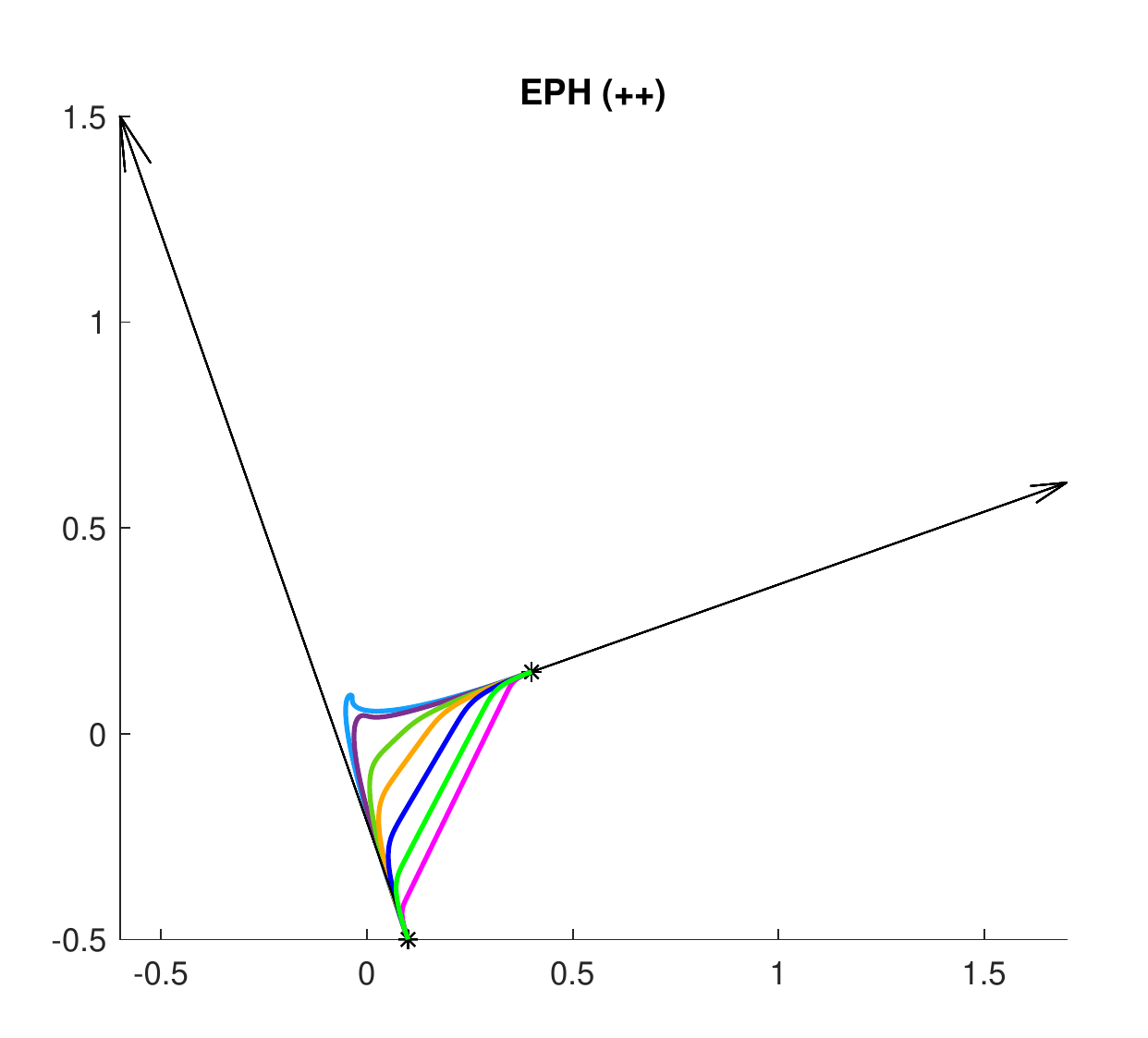}
		\includegraphics[width=0.4\textwidth]{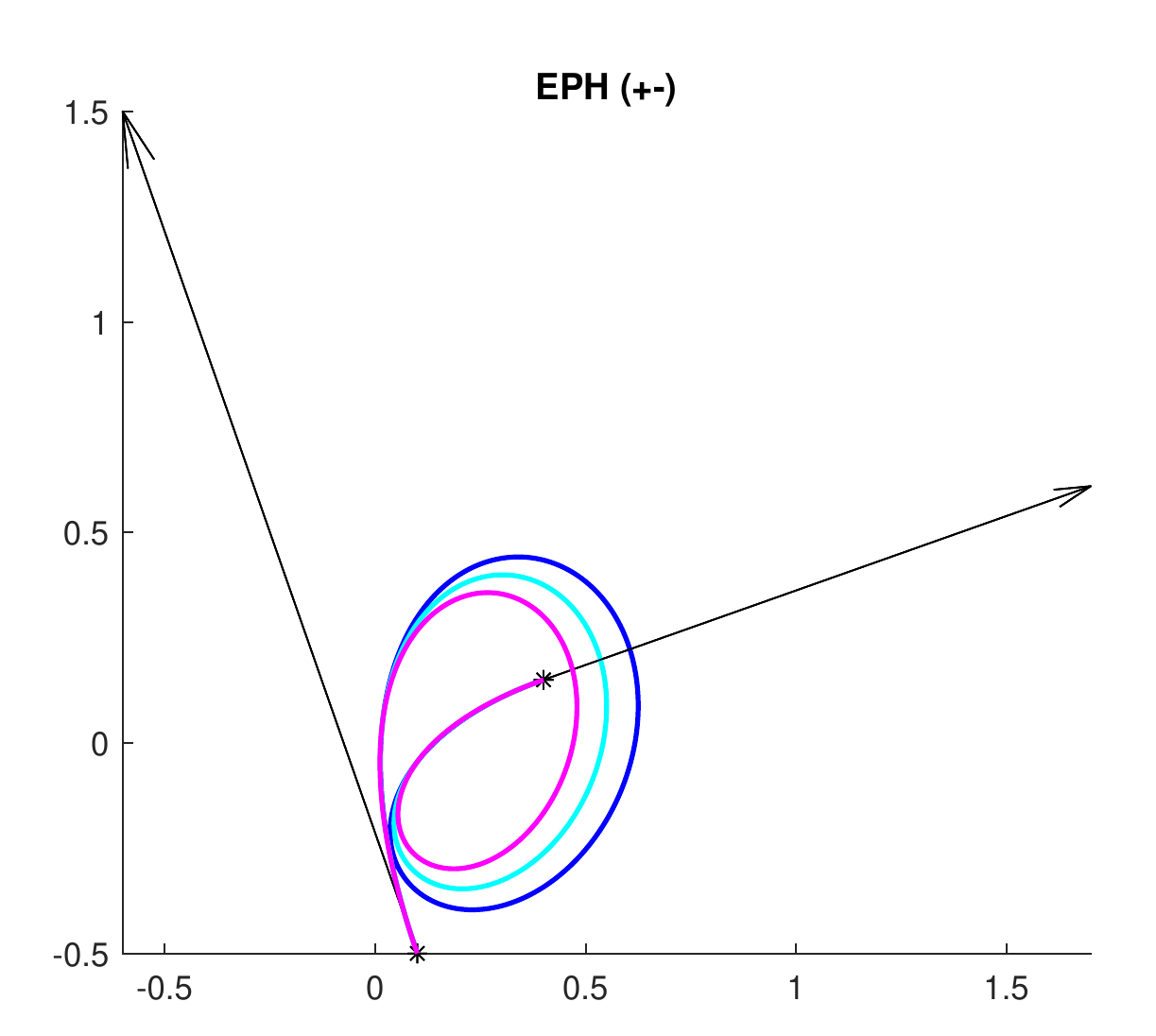}		
	\end{center}
	\caption{{
			Top:	 Planar ATPH interpolants (see \cite{RSA14}) to the same Hermite data of Fig. \ref{fig:PH}. Precisely, on the left the solution $++$ obtained with shape parameter $\alpha \in \{0.01,0.05,0.1,0.2,0.3,0.4,0.6\}$ and on the right the solution $+-$ obtained with shape parameter $\alpha \in \{0.6,0.7,0.8\}$.     
			Bottom: Planar EPH interpolants to the same Hermite data of Fig. \ref{fig:PH}. Precisely, on the left the solution $++$ obtained with the exponential shape parameter $\omega \in \{8,10,15,20,30,50,100\}$ and on the right the solution $+-$ obtained with the exponential shape parameter $\omega \in \{3,3.5,4\}$. As for the meaning of the notation $++$, $+-$ the reader can consult \cite{RSA14} and Section \ref{sec:Hermite}, respectively.}
	}
	\label{fig:ATPH_EPH}
\end{figure}

\begin{figure}[h!]
	\begin{center}
		$ $\\
		\hspace{-0.25cm}
		\includegraphics[width=0.40\textwidth]{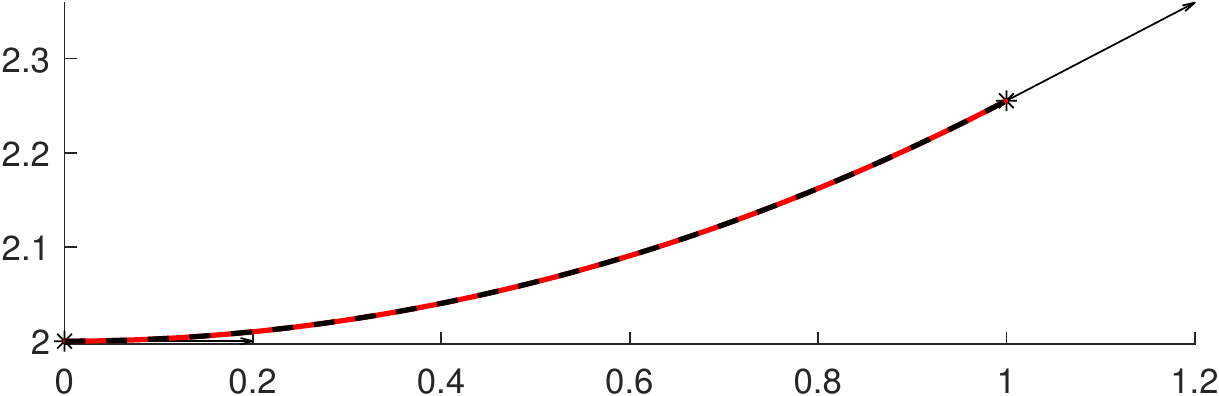}	\hspace{-0.01cm} 
		\includegraphics[width=0.40\textwidth]{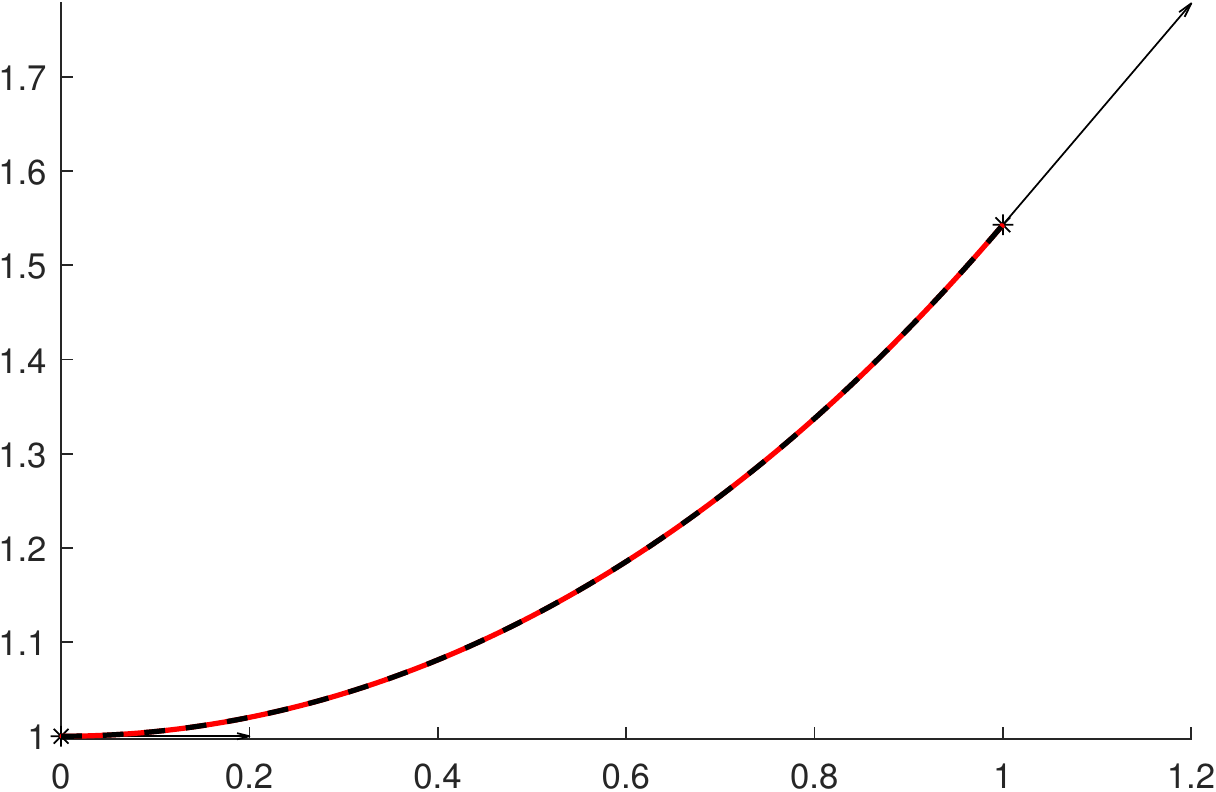}	\hspace{-0.01cm}
		\includegraphics[width=0.18\textwidth]{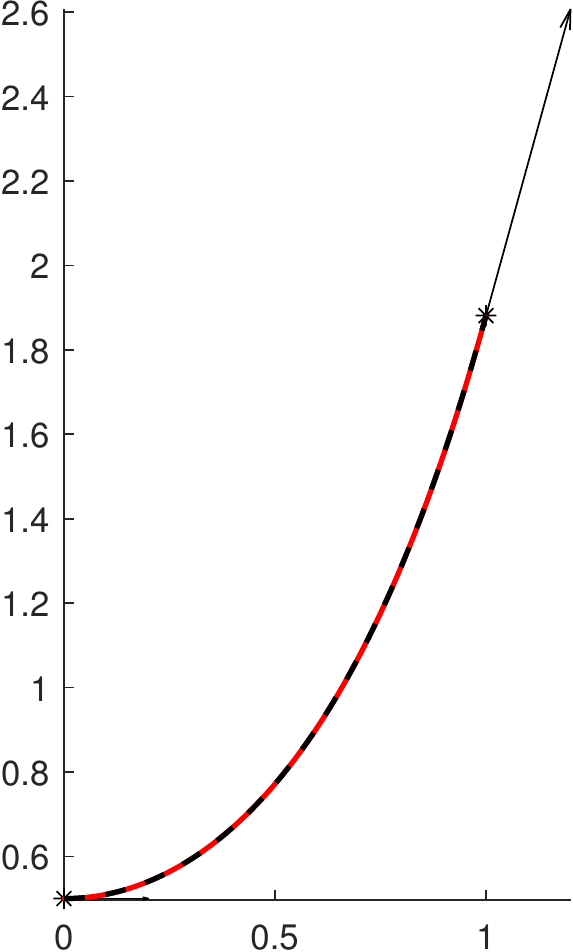}	
	\end{center}
	\caption{{
			Planar EPH Hermite interpolant  (solution $++$) to the points $\mathbf{r}_0=(0,(2 \omega)^{-1})$, $\mathbf{r}_5=(1,(2\omega)^{-1} \, \cosh(2\omega))$ and associated first derivatives $\mathbf{d}_i=(1,0)$, $\mathbf{d}_f=(1, \sinh(2 \omega))$ (here plotted with a scale factor of $1/5$ to fit into the picture), overlapping the function $(2\omega)^{-1} \, \cosh(2\omega t)$, $t \in [0,1]$, for the following choices of the exponential shape parameter: $\omega =0.25$ (left), $\omega =0.5$ (center), $\omega =1$ (right).}
	}
	\label{fig:reproduction}
\end{figure}


\section{PH curves in exponential-polynomial spaces: EPH curves}\label{sec2}

Let $m \in \NN_0$ and $\omega \in \RR^+$, where $\mathbb{N}_0=\mathbb{N}\cup\{0\}$ and $\RR^+$ denotes the set of positive real numbers.
\begin{definition}[Exponential-polynomial spaces] \label{def:EP}
	We define, in terms of $m$ and $\omega$, the following spaces of exponential polynomials
	\[
	{\cal EP}^{\omega}_{m}\;:=\;span\left\{\;\{ 1,t\}\;\cup\;\bigcup_{k=1}^m \{e^{k \omega t}, e^{-k \omega t} \} \; \right\},
	\]
	and
	\[
	{\cal OEP}^{\omega}_{m}\;:=\;span\left\{\;\{ 1,t\}\;\cup\;\bigcup_{k=1}^m \{e^{(2k-1) \omega t}, e^{-(2k-1) \omega t} \} \; \right\}.
	\]
\end{definition}
We observe that
\[
{\cal OEP}^\omega_0\;=\;{\cal EP}^\omega_0\;=\;span\left\{1,t\right\}, \qquad {\cal OEP}^\omega_1\;=\;{\cal EP}^\omega_1\;=\;span\left\{1,t,e^{\omega t},e^{-\omega t}\right\},
\]
but, for $m>1$, ${\cal OEP}^\omega_m\subsetneq{\cal EP}^\omega_{2m-1}$. Moreover, denoting with $\mathcal{D}$ the differential operator $d/dt$, it is easy to check that
\[
{\cal DEP}^{\omega}_{m}\;=\;span\left\{\;\{ 1\}\;\cup\;\bigcup_{k=1}^m \{e^{k \omega t}, e^{-k \omega t} \} \; \right\}\;\subset\;{\cal EP}^{\omega}_{m},
\]
\[
{\cal D}^2{\cal EP}^{\omega}_{m}\;=\;span\left\{\;\bigcup_{k=1}^m \{e^{k \omega t}, e^{-k \omega t} \} \; \right\}\;\subset\;{\cal DEP}^{\omega}_{m},
\]
and similarly
\[
{\cal DOEP}^{\omega}_{m}\;=\;span\left\{\;\{ 1\}\;\cup\;\bigcup_{k=1}^m \{e^{(2k-1) \omega t}, e^{-(2k-1) \omega t} \} \; \right\}\;\subset\;{\cal OEP}^{\omega}_{m},
\]
\[
{\cal D}^2{\cal OEP}^{\omega}_{m}\;=\;span\left\{\;\bigcup_{k=1}^m \{e^{(2k-1) \omega t}, e^{-(2k-1) \omega t} \} \; \right\}\;\subset\;{\cal DOEP}^{\omega}_{m}.
\]
Again, we observe that
\[
{\cal DOEP}^{\omega}_{0}\;=\;{\cal DEP}^{\omega}_{0}\;=\;span\left\{1\right\},\qquad{\cal DOEP}^{\omega}_{1}\;=\;{\cal DEP}^{\omega}_{1}\;=\;span\left\{1,\;e^{\omega t},\;e^{-\omega t}\right\},
\]
\[
{\cal D}^2\mathcal{OEP}^{\omega}_{0}\;=\;{\cal D}^2\mathcal{EP}^{\omega}_{0}\;=\;\left\{0\right\},\qquad{\cal D}^2\mathcal{OEP}^{\omega}_{1}\;=\;{\cal D}^2\mathcal{EP}^{\omega}_{1}\;=\;span\left\{e^{\omega t},\;e^{-\omega t}\right\},
\]
but, for $m>1$, ${\cal DOEP}^\omega_m\subsetneq{\cal DEP}^\omega_{2m-1}$ and ${\cal D}^2{\cal OEP}^\omega_m\subsetneq{\cal D}^2{\cal EP}^\omega_{2m-1}$. \\

\begin{definition}[PH curve in ${\cal EP}^{\omega}_{m}$] \label{def:spatial_EPH}
	A parametric curve $\mathbf{r}:[0,1]\rightarrow\RR^d$, $d\in\{2,3\}$, is called a PH curve in ${\cal EP}^{\omega}_{m}$ if and only if one of the following holds
	\begin{itemize}
		\item (planar EPH curve) $d=2$, $\mathbf{r}(t)=(x(t),y(t))$, $x,y\in{\cal EP}^\omega_{m}$, with
		\begin{equation}\label{eq:firstder_2d_Ig}
			\left\{\begin{array}{l}
				x'(t)\;=\;\left(a_1(t)\right)^2\;-\;\left(a_2(t)\right)^2,\\ 
				y'(t)\;=\;2 a_1(t) a_2(t),
			\end{array}\right.
		\end{equation}
		for some exponential polynomials $a_1$, $a_2$.
		\item (spatial EPH curve) $d=3$, $\mathbf{r}(t)=(x(t),y(t),z(t))$, $x,y,z\in{\cal EP}^\omega_{m}$, with
		\begin{equation}\label{eq:firstder_3d_Ig}
			\left\{\begin{array}{l}
				x'(t)\;=\;\left(a_0(t)\right)^2\;+\;\left(a_1(t)\right)^2\;-\;\left(a_2(t)\right)^2\;-\;\left(a_3(t)\right)^2,\\ 
				y'(t)\;=\;2 \left( \;a_1(t) a_2(t) \;+\; a_0(t) a_3(t) \;\right),\\ 
				z'(t)\;=\;2 \left( \;a_1(t) a_3(t)\; -\; a_0(t) a_2(t)\; \right),
			\end{array}\right.
		\end{equation}
		for some exponential polynomials $a_0$, $a_1$, $a_2$, $a_3$.
	\end{itemize}
\end{definition}

\begin{remark} \label{rem:com_root}
	The curves defined via \eqref{eq:firstder_2d_Ig} and \eqref{eq:firstder_3d_Ig} are regular if and only if the exponential polynomials $a_k$ do not have a common root in $[0,1]$ (see, e.g., \cite{Fbook}). 
\end{remark}
In what follows we only consider the case $d=3$, i.e., spatial curves: planar curves ($d=2$) can be easily obtained setting $a_0(t)=a_3(t)=0$ since this condition and \eqref{eq:firstder_3d_Ig} imply \eqref{eq:firstder_2d_Ig} along with $z(t)$ being constant.
From \eqref{eq:firstder_3d_Ig} then, it is easy to get 
$$\left(x'(t)\right)^2\;+\;\left(y'(t)\right)^2\;+\;\left(z'(t)\right)^2\;=\;\left( \;\left(a_0(t)\right)^2\;+\;\left(a_1(t)\right)^2\;+\;\left(a_2(t)\right)^2\;+\;\left(a_3(t)\right)^2\; \right)^2.$$
Thus, the defining characteristic of a PH curve in ${\cal EP}^{\omega}_{m}$ is the fact that the coordinate components of its derivative (or hodograph) comprise a Pythagorean $d$-tuple of functions in ${\cal DEP}^{\omega}_{m}$
- i.e., the sum of their squares coincides with the perfect square of a function in ${\cal DEP}^{\omega}_{m}$.
By virtue of this remarkable property, the parametric speed of the curve $\sigma(t)=|\mathbf{r}'(t)|$ satisfies
$$
\sigma(t)\;=\;\left(a_0(t)\right)^2\;+\;\left(a_1(t)\right)^2\;+\;\left(a_2(t)\right)^2\;+\;\left(a_3(t)\right)^2.
$$
Clearly, a plethora of combinations of exponential polynomials $a_k$ exists so that \eqref{eq:firstder_2d_Ig} and \eqref{eq:firstder_3d_Ig} identify PH curves in ${\cal EP}^{\omega}_m$. In order to simplify both the analysis and the construction, it is easier to identify spaces so that for every choice of the exponential polynomials $a_k$ belonging to such spaces we can guarantee the resulting curves to be PH curves in ${\cal EP}^{\omega}_m$.

\begin{proposition} \label{prop:As}
	Let ${\cal A}^\omega_m$ be either ${\cal DEP}^\omega_{\lfloor m/2\rfloor}$ or ${\cal D}^2{\cal OEP}^{\omega/2}_{\lfloor (m+1)/2\rfloor}$. Then, for every $a_0$, $a_1$, $a_2$, $a_3\in{\cal A}^\omega_m$, equation \eqref{eq:firstder_3d_Ig} defines a PH curve in ${\cal EP}^\omega_m$. 
\end{proposition}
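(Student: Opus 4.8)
The plan is to reduce the statement to a purely algebraic closure property of the spanning exponentials. By the identity displayed immediately before the proposition, once the coordinate functions $x,y,z$ produced by \eqref{eq:firstder_3d_Ig} are known to lie in ${\cal EP}^\omega_m$, the Pythagorean identity for the parametric speed holds automatically; so the only thing I need to check is that the three right-hand sides of \eqref{eq:firstder_3d_Ig} are derivatives of functions in ${\cal EP}^\omega_m$, i.e. that $x',y',z'\in{\cal DEP}^\omega_m$. This last implication is clear: integrating an element of ${\cal DEP}^\omega_m=span\bigl(\{1\}\cup\bigcup_{k=1}^m\{e^{k\omega t},e^{-k\omega t}\}\bigr)$ term by term turns the constant term into a multiple of $t$ and each $e^{\pm k\omega t}$ into a multiple of itself, while the integration constant is absorbed by the function $1\in{\cal EP}^\omega_m$; hence the antiderivative lies in ${\cal EP}^\omega_m$.

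Next I would use linearity. Each of $x'$, $y'$, $z'$ in \eqref{eq:firstder_3d_Ig} is an $\RR$-linear combination of the products $a_ia_j$ with $a_i,a_j\in{\cal A}^\omega_m$, and ${\cal DEP}^\omega_m$ is a linear subspace, so it suffices to show that the product of any two basis exponentials of ${\cal A}^\omega_m$ belongs to ${\cal DEP}^\omega_m$ — equivalently, that such a product is of the form $e^{j\omega t}$ with $j\in\ZZ$ and $|j|\le m$ (with $e^0=1$). For ${\cal A}^\omega_m={\cal DEP}^\omega_{\lfloor m/2\rfloor}$ this is immediate: the basis exponentials are $1$ and $e^{\pm k\omega t}$ with $1\le k\le\lfloor m/2\rfloor$, and a product of two of them is $e^{j\omega t}$ with $|j|\le 2\lfloor m/2\rfloor\le m$. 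For ${\cal A}^\omega_m={\cal D}^2{\cal OEP}^{\omega/2}_{\lfloor (m+1)/2\rfloor}$ the basis exponentials are $e^{\pm(2k-1)\omega t/2}$ with $1\le k\le\lfloor(m+1)/2\rfloor$, so a product of two of them is $e^{n\omega t/2}$ with $n=\pm(2i-1)\pm(2j-1)$; since a sum or difference of two odd integers is even, we get $n/2\in\ZZ$ and $|n/2|\le 2\lfloor(m+1)/2\rfloor-1$, and a one-line case split on the parity of $m$ shows $2\lfloor(m+1)/2\rfloor-1\le m$ (it equals $m$ for odd $m$ and $m-1$ for even $m$). In either case the product lies in ${\cal DEP}^\omega_m$, which completes the argument.

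The genuinely load-bearing observation — the closest thing to an obstacle here, though it is not deep — is the second case: one has to notice that halving the shape parameter makes the exponents of the spanning functions of ${\cal D}^2{\cal OEP}^{\omega/2}_\bullet$ the \emph{half-odd-integer} multiples of $\omega$, and that products of such exponentials land back exactly on integer multiples of $\omega$ precisely because an odd integer plus or minus an odd integer is even. The remaining work is the floor-function bookkeeping that confirms the resulting index never exceeds $m$, together with the (routine) verification that antiderivatives of ${\cal DEP}^\omega_m$ stay inside ${\cal EP}^\omega_m$.
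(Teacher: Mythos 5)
Your proposal is correct and follows essentially the same route as the paper: both reduce the claim, via linearity, to showing that products of (basis) exponentials of ${\cal A}^\omega_m$ land in ${\cal DEP}^\omega_m$, with the identical floor-function bookkeeping and the same parity observation that sums of odd integers are even in the ${\cal D}^2{\cal OEP}^{\omega/2}_{\lfloor(m+1)/2\rfloor}$ case. The only difference is that you spell out explicitly why antiderivatives of ${\cal DEP}^\omega_m$ lie in ${\cal EP}^\omega_m$, which the paper leaves implicit in its earlier discussion of the operator ${\cal D}$.
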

\begin{proof} 
	From \eqref{eq:firstder_3d_Ig}, since ${\cal DEP}^\omega_m$ is a linear space, we only need to prove that
	\[
	\left({\cal DEP}^\omega_{\lfloor m/2\rfloor}\right)^2\;\subseteq\;{\cal DEP}^\omega_{m}\qquad \textrm{ and }\qquad \left({\cal D}^2{\cal OEP}^{\omega/2}_{\lfloor (m+1)/2\rfloor}\right)^2\;\subseteq\;{\cal DEP}^\omega_{m}.
	\]
	Consider then, for $\alpha_k$, $\beta_k\in\RR$, $k=-\lfloor m/2\rfloor,\dots,\lfloor m/2\rfloor$, 
	\[
	\sum_{k=-\lfloor m/2\rfloor}^{\lfloor m/2\rfloor}\alpha_k e^{k\omega t},\;\sum_{k=-\lfloor m/2\rfloor}^{\lfloor m/2\rfloor}\beta_k e^{k\omega t}\;\in\:{\cal DEP}^\omega_{\lfloor m/2\rfloor}.	
	\]
	We have
	\[\begin{array}{c}
		\left(\sum_{k=-\lfloor m/2\rfloor}^{\lfloor m/2\rfloor}\alpha_k e^{k\omega t}\right)\left(\sum_{k=-\lfloor m/2\rfloor}^{\lfloor m/2\rfloor}\beta_k e^{k\omega t}\right)=\sum_{j,k=-\lfloor m/2\rfloor}^{\lfloor m/2\rfloor}\alpha_j\beta_k e^{(j+k)\omega t}\in{\cal DEP}^{\omega}_m,
	\end{array}\]
	since $-m\leq -2\lfloor m/2\rfloor \leq j+k\leq 2\lfloor m/2\rfloor \leq m$.\\ Similarly, for $\alpha_k$, $\beta_k\in\RR$, $k=1-\lfloor (m+1)/2\rfloor,\dots,\lfloor (m+1)/2\rfloor$, and
	\[
	\sum_{k=1-\lfloor (m+1)/2\rfloor}^{\lfloor (m+1)/2\rfloor}\alpha_k e^{\frac{2k-1}{2}\omega t},\;\sum_{k=1-\lfloor (m+1)/2\rfloor}^{\lfloor (m+1)/2\rfloor}\beta_k e^{\frac{2k-1}{2}\omega t}\;\in\:{\cal D}^2{\cal OEP}^{\omega/2}_{\lfloor (m+1)/2\rfloor},
	\]
	we have
	\begin{multline*}
		\left(\sum_{k=1-\lfloor (m+1)/2\rfloor}^{\lfloor (m+1)/2\rfloor}\alpha_k e^{\frac{2k-1}{2}\omega t}\right)\left(\sum_{k=1-\lfloor (m+1)/2\rfloor}^{\lfloor (m+1)/2\rfloor}\beta_k e^{\frac{2k-1}{2}\omega t}\right)\;=\\
		=\;\sum_{j,k=1-\lfloor (m+1)/2\rfloor}^{\lfloor (m+1)/2\rfloor}\alpha_j\beta_k e^{(j+k-1)\omega t}\;\in\;{\cal DEP}^{\omega}_m,
	\end{multline*}
	since $-m\leq 1-2\lfloor (m+1)/2\rfloor \leq j+k-1\leq 2\lfloor (m+1)/2\rfloor-1 \leq m$.
\end{proof}
Examples of major interest that we consider in the following are:\\
\begin{itemize}
	\item $m=1$: \ ${\bf r}(t)$ is a PH curve in ${\cal EP}^{\omega}_{1}=span\{1, t, e^{\omega t}, e^{-\omega t} \}$, its hodograph ${\bf r}'(t)$ is in ${\cal DEP}^{\omega}_{1}=span\{1, e^{\omega t}, e^{- \omega t}\}$ and 
	$\mathcal{A}^\omega_1$ is either ${\cal DEP}^\omega_0=span\{1\}$ or ${\cal D}^2{\cal OEP}^{\omega/2}_{1}=span\{e^{\omega t/2},e^{-\omega t/2}\}$; \\
	
	\item $m=2$: ${\bf r}(t)$ is a PH curve in  ${\cal EP}^{\omega}_{2}=span\{1, t, e^{\omega t}, e^{-\omega t}, e^{2\omega t}, e^{-2\omega t} \}$, its hodograph ${\bf r}'(t)$ is in ${\cal DEP}^{\omega}_{2}=span\{1, e^{\omega t}, e^{-\omega t}, e^{2\omega t}, e^{-2\omega t} \}$ and 
	$\mathcal{A}^\omega_2$ is either ${\cal DEP}^\omega_1=span\{1,e^{\omega t}, e^{-\omega t}\}$ or ${\cal D}^2{\cal OEP}^{\omega/2}_{1}=span\{e^{\omega t/2},e^{-\omega t/2}\}$. \\
\end{itemize}
For $m=1$, $\mathcal{A}^\omega_1={\cal DEP}^\omega_0$ corresponds to trivial line segments. Similarly, for $m=2$, $\mathcal{A}^\omega_2={\cal D}^2{\cal OEP}^{\omega/2}_{1}$ leads to PH curves in ${\cal EP}^\omega_1\subsetneq{\cal EP}^\omega_2$. In general, for $m$ odd, $\mathcal{A}^\omega_m={\cal DEP}^\omega_{\lfloor m/2\rfloor}$ describes curves which actually live in ${\cal EP}^{\omega}_{m-1}$, while, for $m$ even, this is the case for $\mathcal{A}^\omega_m={\cal D}^2{\cal OEP}^{\omega/2}_{\lfloor(m+1)/2\rfloor}$. Therefore, from now on we only consider
\begin{equation} \label{eq:As}
	\mathcal{A}^\omega_m\;:=\;\left\{\begin{array}{cl}
		{\cal DEP}^\omega_{m/2}, & \text{if } m \text{ even}, \\ 
		{\cal D}^2{\cal OEP}^{\omega/2}_{(m+1)/2}, & \text{if } m \text{ odd},
	\end{array}\right.
\end{equation}
for which $\text{dim}(\mathcal{A}^\omega_m)=m+1$. Starting from $\{a_k\}_{k=0}^3$ in $\mathcal{A}^\omega_m$, \eqref{eq:firstder_3d_Ig} defines univocally $x'(t)$, $y'(t)$ and $z'(t)$ that belong to $\mathcal{DEP}^\omega_m$ and finally, by integration, one can obtain the analytic expressions of $x(t)$, $y(t)$ and $z(t)$ in $\mathcal{EP}^\omega_m$. Then, for a fixed $m$, three spaces need to be considered. For each of these spaces a B-basis (see \cite{MP07}) is defined in what follows using the notation summarized in \cref{tab:spaces}.
\begin{table}[h!]
	\caption{Notation used for the exponential-polynomial spaces and their respective B-basis.} \label{tab:spaces}
	\begin{center}\begin{tabular}{|c|c|c|c|}
			\hline
			space	& $\mathcal{A}^\omega_m$ & $\mathcal{DEP}^\omega_m$ & $\mathcal{EP}^\omega_m$\\ 
			\hline
			dimension	& $m+1$ & $2m+1$ & $2(m+1)$\\ 
			\hline
			B-basis	& $\{\psi^\omega_{j,m}\}_{j=0}^m$ & $\{\varphi^\omega_{j,m}\}_{j=0}^{2m}$ & $\{\phi^\omega_{j,m}\}_{j=0}^{2m+1}$\\ 
			\hline
	\end{tabular}\end{center}
\end{table}
According to \cref{def:spatial_EPH}, \cref{rem:com_root}, \cref{prop:As} and \eqref{eq:As}, four functions $a_0, a_1, a_2, a_3$ in $\mathcal{A}^\omega_m$ having no common roots define a PH curve $\mathbf{r}(t)=\left(x(t),y(t),z(t)\right)$ in $\mathcal{EP}^\omega_m$ as in \eqref{eq:firstder_3d_Ig}. Such a curve can thus be associated to a function from the interval $[0,1]$ to the quaternions in a natural way as follows.
\begin{definition} \label{defn:preimage}
	The function
	$$
	\mathbf{A}(t)\;:=\;
	a_0(t) \;+\; a_1(t) \mathbf{i} \;+\; a_2(t) \mathbf{j} \;+\; a_3(t) \mathbf{k},
	$$
	where $\mathbf{i}, \mathbf{j}, \mathbf{k}$ denote the so-called fundamental quaternion units (see, e.g., \cite{Fbook}, Section 5.3), is called the preimage of ${\bf r}(t)$.
\end{definition}
Expanding the coefficients of the preimage with respect to a B-basis of ${\cal A}^{\omega}_{m}$ as
$
a_k(t)=   \sum_{j=0}^{m} a_{k,j} \psi^{\omega}_{j,m}(t)$, $k\in\{0,\dots,3\},
$
for some $a_{k,j}\in\mathbb{R}$, we can rewrite 
\begin{equation}\label{eq:A0A1_spatialg}
	\mathbf{A}(t)\;=\;  \sum_{j=0}^{m} \mathbf{A}_j \psi^{\omega}_{j,m}(t)\qquad\text{where}\qquad\mathbf{A}_j\;=\; a_{0,j} \;+\; a_{1,j} \mathbf{i} \;+\; a_{2,j}  \mathbf{j} \;+\; a_{3,j} \mathbf{k}.
\end{equation}
Moreover, we can compactly write the hodograph $\mathbf{r}'(t)=(x'(t), y'(t), z'(t))$ of $\mathbf{r}(t)$ with the pure vector quaternion
\begin{equation}\label{eq:hodograph_quaternionic_formg}
	\mathbf{r}'(t)\;=\;
	\mathbf{A}(t) \mathbf{i} \mathbf{A}^*(t).
\end{equation}
Here and in the following, with an abuse of notation, we identify vectors in $\mathbb{R}^3$ and pure vector quaternions via the natural bijection
$
(x,y,z)\;\longleftrightarrow\; x\mathbf{i}+y\mathbf{j}+z\mathbf{k}.
$
Accordingly, in view of \eqref{eq:hodograph_quaternionic_formg}, the parametric speed of $\mathbf{r}(t)$ has the quaternionic representation
\begin{equation}\label{eq:speed_quaternionic_formg}
	\sigma(t)\;=\;|\mathbf{r'}(t)|\;=\;|\mathbf{A}(t) \mathbf{i} \mathbf{A}^*(t)|\;=\;|\mathbf{A}(t)|^2\;=\;\mathbf{A}(t) \mathbf{A}^*(t).
\end{equation}

\section{PH curves in ${\cal EP}^{\omega}_1$}\label{sec3}

\subsection{The normalized B-basis of the space ${\cal EP}^{\omega}_1$}
On the interval $[0,1]$ the non-negative exponential functions
\begin{equation}
	\begin{array}{l}
		\psi_{0,1}^{\omega}(t)\;=\;  \frac{e^{\frac{\omega}{2}(1-t)}-e^{-\frac{\omega}{2}(1-t)}}{e^\frac{\omega}{2}-e^{-\frac{\omega}{2}}}
		\;=\;\frac{ \sinh\left( \frac{\omega}{2}-\frac{\omega}{2}t \right)}{\sinh\left(\frac{\omega}{2}\right)}, \\ 
		\psi_{1,1}^{\omega}(t)\;=\;  \frac{e^{\frac{\omega}{2}t}-e^{-\frac{\omega}{2} t}}{e^\frac{\omega}{2}-e^{-\frac{\omega}{2}}}
		\;=\;\frac{\sinh\left(\frac{\omega}{2}t\right)}{\sinh\left(\frac{\omega}{2}\right)}
	\end{array}
\end{equation}
define a B-basis of the extended Chebyshev space \scalebox{0.97}{${\cal A}^{\omega}_1=\mathcal{D}^2\mathcal{OEP}^{\omega/2}_1=span\{ e^{\frac{\omega}{2}t}, e^{-\frac{\omega}{2}t} \}$}.
However, note that $\{\psi_{0,1}^{\omega}(t), \psi_{1,1}^{\omega}(t)\}$ is not normalized since $\psi_{0,1}^{\omega}(t)+\psi_{1,1}^{\omega}(t) \neq 1$ for $t \in (0,1)$. By squaring an arbitrary function $f \in {\cal A}^{\omega}_1$, we obtain a function $f^2$ that belongs to the exponential space
${\cal DEP}^{\omega}_1=span\{ 1, e^{\omega t}, e^{-\omega t} \}$.
Since we assume $\omega \in \RR^+$, ${\cal DEP}^{\omega}_1$ is an extended Chebyshev space that, on the interval $[0,1]$, admits a normalized B-basis of the form
\begin{equation}\label{eq:U2basis}
	\resizebox{0.915\textwidth}{!}{$\begin{array}{l}
			\varphi_{0,1}^{\omega}(t)\;=\;  \frac{(e^{\omega(1-t)} - 1)^2 \, e^{\omega t}}{(e^{\omega} - 1)^2}\;=\;
			\frac{\cosh(\omega-\omega t)-1}{\cosh(\omega)-1}, \\ 
			\varphi_{1,1}^{\omega}(t)\;=\;  \frac{(e^{-\omega t} - 1) \, (e^{\omega t} - e^{\omega}) \, (e^{\omega} + 1)}{(e^{\omega} - 1)^2}\;=\;
			\frac{\cosh(\omega) -\cosh(\omega t) -\cosh(\omega-\omega t) + 1}{\cosh(\omega)-1}, \\ 
			\varphi_{2,1}^{\omega}(t)\;=\;  \frac{(e^{\omega t}-1)^2 \, e^{\omega(1-t)}}{(e^{\omega} - 1)^2}\;=\;
			\frac{\cosh(\omega t)-1}{\cosh(\omega)-1}.
		\end{array}$}
\end{equation}
The exponential functions $\{\varphi_{0,1}^{\omega}(t),\varphi_{1,1}^{\omega}(t),\varphi_{2,1}^{\omega}(t)\}$ satisfy the following relationships with the exponential functions $\{\psi_{0,1}^{\omega}(t),\psi_{1,1}^{\omega}(t)\}$:
\begin{equation}\label{equivalenze1}
	\begin{array}{c}
		\resizebox{0.88\textwidth}{!}{$\left( \psi_{0,1}^{\omega}(t) \right)^2 = \varphi_{0,1}^{\omega}(t), \quad
			\psi_{0,1}^{\omega}(t)  \psi_{1,1}^{\omega}(t) = \frac{1}{2} { c_1(\omega)}  \varphi_{1,1}^{\omega}(t),  \quad
			\left( \psi_{1,1}^{\omega}(t) \right)^2 = \varphi_{2,1}^{\omega}(t),$}
		\\  \\
		\quad \hbox{with}
		\quad
		{c_1(\omega)}\;=\;   \frac{2}{e^{\frac{\omega}{2}}+e^{-\frac{\omega}{2}}} \;=\;\frac{1}{\cosh\left(\frac{\omega}{2}\right)}.
	\end{array}
\end{equation}
The antiderivative of $f^2 \in {\cal DEP}^{\omega}_1$ is an exponential-polynomial function that belongs to the order-4 exponential-polynomial space ${\cal EP}^{\omega}_1=span \{ 1, t, e^{\omega t}, e^{-\omega t} \}$.
The exponential-polynomial functions
\begin{equation}\label{eq:U3basis}
	\resizebox{0.915\textwidth}{!}{$\begin{array}{lll}
			\phi_{0,1}^{\omega}(t)&=&  \frac{e^{\omega(1-t)}-e^{-\omega(1-t)}-2\omega(1-t)}{e^{\omega}-e^{-\omega}-2\omega} \;=\; \frac{\sinh(\omega-\omega t)-\omega(1-t)}{\sinh(\omega)-\omega}, \\ 
			\phi_{1,1}^{\omega}(t)&=& \frac{(e^{-\omega}-1) \, \Big( (\omega+1-e^{\omega}) e^{\omega t}+((\omega-1)e^{\omega}+1) e^{\omega(1-t)} \Big)+
				(e^{\omega}-1) \, \Big( \omega \left(2t+(e^{-\omega}+e^{\omega})(1-t) \right)+e^{-\omega}-e^{\omega} \Big)}{(\omega+2+e^{\omega}(\omega-2)) \, (e^{\omega}-e^{-\omega}-2\omega)} \\
			&=&  \frac{-\omega t -\omega(1-t) \cosh(\omega) + \omega \cosh(\omega-\omega t) + \sinh(\omega) - \sinh(\omega t) -\sinh(\omega-\omega t)}{(\omega \coth\left(\frac{\omega}{2} \right)-2) (\omega-\sinh(\omega))}, \\
			\phi_{2,1}^{\omega}(t)&=& \frac{(e^{-\omega}-1) \, \Big( (\omega+1-e^{\omega}) e^{\omega(1-t)}+((\omega-1)e^{\omega}+ 1) e^{\omega t} \Big)+
				(e^{\omega}-1) \, \Big( \omega \left(2(1-t)+(e^{-\omega}+e^{\omega})t\right)+e^{-\omega}-e^{\omega} \Big)}{(\omega+2+e^{\omega}(\omega-2)) \, (e^{\omega}-e^{-\omega}-2\omega)}  \\
			&=& 
			\frac{-\omega(1-t) -\omega t \cosh(\omega) + \omega \cosh(\omega t) + \sinh(\omega) -\sinh(\omega - \omega t) -\sinh(\omega t)}{(\omega  \coth\left(\frac{\omega}{2} \right)-2) (\omega - \sinh(\omega))}, \\
			\phi_{3,1}^{\omega}(t)&=&  \frac{e^{\omega t}-e^{-\omega t}-2\omega t}{e^{\omega}-e^{-\omega}-2\omega}\; =\;
			\frac{\sinh(\omega t)-\omega t}{\sinh(\omega)-\omega},
		\end{array}$}
\end{equation}
define a normalized B-basis of the extended Chebyshev space ${\cal EP}^{\omega}_1$ on $[0,1]$. For later use we observe that for the antiderivatives of the basis functions of ${\cal DEP}^{\omega}_1$ we can write
\begin{equation}\label{integrali1}
	\begin{array}{c}
		\int_0^t{\varphi_{0,1}^{\omega}(x)  dx} \;=\;
		{		c_2(\omega) \;\sum_{i=1}^3\;\phi_{i,1}^{\omega}(t)},\quad 
		\int_0^t{\varphi_{1,1}^{\omega}(x)  dx} \;=\; 
		{	\frac{c_3(\omega)}{c_1(\omega)} \; \sum_{i=2}^3\;\phi_{i,1}^{\omega}(t)},\\  \\
		\int_0^t{\varphi_{2,1}^{\omega}(x)  dx} \;=\; c_2(\omega) \; \phi_{3,1}^{\omega}(t),
	\end{array}
\end{equation}
with
\begin{equation}\label{eq:cs}
	\begin{array}{c}
		c_2(\omega) {\;=\; \int_0^1\;\varphi^\omega_{0,1}(t)\;dt} \;=\;   \frac{\sinh(\omega)-\omega}{\omega  (\cosh(\omega)-1)},\\ \\
		c_3(\omega) {\;=\; c_1(\omega)\;\int_0^1\;\varphi^\omega_{1,1}(t)\;dt} \;=\; \frac{\frac{\omega}{2}  \coth\left( \frac{\omega}{2} \right)-1}{\frac{\omega}{2}  \sinh\left( \frac{\omega}{2} \right)}.
	\end{array}
\end{equation}

{
	\begin{remark}
		For all $\omega \in \mathbb{R}^+$, we always have $c_1(\omega), c_2(\omega), c_3(\omega) \neq 0$.
	\end{remark}
}

\subsection{Geometric properties of B\'ezier-like curves in ${\cal EP}^{\omega}_1$}

\begin{definition}[B\'ezier-like curves in ${\cal EP}^{\omega}_1$]\label{def:UEcurve}
	Given a control polygon with vertices ${\bf r}_i \in \RR^d$, $i=0,\dots,3$, the associated B\'ezier-like curve in ${\cal EP}^{\omega}_1$ is defined as
	\begin{equation} \label{eq:cub_Bez}
		{\bf r}(t)=\sum_{i=0}^3 \, {\bf r}_i \, \phi_{i,1}^{\omega}(t), \qquad t\in[0,1].
	\end{equation}
\end{definition}

\begin{proposition}[Properties of B\'ezier-like curves in ${\cal EP}^{\omega}_1$]\label{prop:EP4} \newline
	The B\'ezier-like curve in \eqref{eq:cub_Bez} has the following properties:
	\begin{itemize}
		\item[(a)] \emph{Convex hull property and geometric invariance property}. The entire curve lies inside the convex hull of its control points and its shape is independent of the coordinate system, i.e., it is scale and translation invariant.
		\item[(b)] \emph{Symmetry}. The control points ${\bf r}_0, \, {\bf r}_1, \, {\bf r}_2, \, {\bf r}_3$ and ${\bf r}_3, \, {\bf r}_{2}, \, {\bf r}_1, \, {\bf r}_0$ define the same curve with respect to different parameterizations, i.e., \[\sum_{i=0}^3 \, {\bf r}_i \phi_{i,1}^{\omega}(t)\;=\;\sum_{i=0}^3 \, {\bf r}_{3-i} \phi_{i,1}^{\omega}(1-t), \qquad t \in [0,1].\]
		\item[(c)]  \emph{Derivative formula}. 
		$$
		\frac{d}{dt}{\bf r}(t)\;=\;\sum_{i=0}^{2} \, 
		{\frac{\Delta{\bf r}_i}{\int_{0}^1\;\varphi_{i,1}^\omega(x)\;dx} } \, \varphi_{i,1}^{\omega}(t), \qquad t \in[0,1],
		$$
		where, for all $i=0,1,2$, $\Delta{\bf r}_i={\bf r}_{i+1}-{\bf r}_i$.
		\item[(d)] \emph{Endpoint conditions.}
		$$
		\begin{array}{cc}
			{\bf r}(0)\;=\;{\bf r}_0,\qquad  & {\bf r}'(0)\;=\;
			\frac{1}{c_2(\omega)} \, ({\bf r}_1-{\bf r}_0),\\ \\
			{\bf r}(1)\;=\;{\bf r}_3, \qquad & {\bf r}'(1)\;=\;
			\frac{1}{c_2(\omega)} \, ({\bf r}_{3}-{\bf r}_{2}).
		\end{array}
		$$
	\end{itemize}
\end{proposition}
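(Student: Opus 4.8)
The plan is to obtain properties (a)--(d) directly from the closed forms in \eqref{eq:U3basis}, from the fact that $\{\phi_{i,1}^{\omega}\}_{i=0}^{3}$ is a \emph{normalized} B-basis of ${\cal EP}^{\omega}_1$ on $[0,1]$ (hence $\phi_{i,1}^{\omega}\ge 0$ on $[0,1]$ and $\sum_{i=0}^{3}\phi_{i,1}^{\omega}\equiv 1$), and from the integral identities \eqref{integrali1}--\eqref{eq:cs} linking it to the B-basis $\{\varphi_{i,1}^{\omega}\}_{i=0}^{2}$ of ${\cal DEP}^{\omega}_1$. For (a), the two structural facts just recalled say that, at each $t$, ${\bf r}(t)$ in \eqref{eq:cub_Bez} is a convex combination of ${\bf r}_0,\dots,{\bf r}_3$, which is the convex-hull property; scale invariance is immediate from the linearity of \eqref{eq:cub_Bez} in the control points, and translation invariance follows because replacing each ${\bf r}_i$ by ${\bf r}_i+{\bf v}$ turns ${\bf r}(t)$ into ${\bf r}(t)+{\bf v}\sum_i\phi_{i,1}^{\omega}(t)={\bf r}(t)+{\bf v}$. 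If one prefers the partition-of-unity identity independently of the B-basis terminology, it can be checked by adding the four expressions in \eqref{eq:U3basis}.

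For (b), I would first establish the basis symmetry $\phi_{i,1}^{\omega}(t)=\phi_{3-i,1}^{\omega}(1-t)$ for $i=0,1,2,3$ by the substitution $t\mapsto 1-t$ in \eqref{eq:U3basis}: for $i=0$ the numerator $\sinh(\omega-\omega(1-t))-\omega t$ collapses to $\sinh(\omega t)-\omega t$, giving $\phi_{0,1}^{\omega}(1-t)=\phi_{3,1}^{\omega}(t)$, and in the second (hyperbolic) form of $\phi_{1,1}^{\omega}$ the same substitution carries the numerator onto that of $\phi_{2,1}^{\omega}$ while fixing the denominator, giving $\phi_{1,1}^{\omega}(1-t)=\phi_{2,1}^{\omega}(t)$; the remaining two identities are these read in the opposite direction. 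Multiplying by ${\bf r}_i$, summing over $i$, and re-indexing $i\mapsto 3-i$ then produces the asserted equality of curves.

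For (c) --- the only part needing more than inspection --- I would differentiate the three relations in \eqref{integrali1} to get $\varphi_{0,1}^{\omega}=c_2(\phi_{1,1}^{\omega}+\phi_{2,1}^{\omega}+\phi_{3,1}^{\omega})'$, $\varphi_{1,1}^{\omega}=\frac{c_3}{c_1}(\phi_{2,1}^{\omega}+\phi_{3,1}^{\omega})'$ and $\varphi_{2,1}^{\omega}=c_2(\phi_{3,1}^{\omega})'$, and combine them with $(\phi_{0,1}^{\omega})'=-(\phi_{1,1}^{\omega}+\phi_{2,1}^{\omega}+\phi_{3,1}^{\omega})'$, which comes from $\sum_i\phi_{i,1}^{\omega}\equiv 1$. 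These telescope to $(\phi_{0,1}^{\omega})'=-\frac1{c_2}\varphi_{0,1}^{\omega}$, $(\phi_{1,1}^{\omega})'=\frac1{c_2}\varphi_{0,1}^{\omega}-\frac{c_1}{c_3}\varphi_{1,1}^{\omega}$, $(\phi_{2,1}^{\omega})'=\frac{c_1}{c_3}\varphi_{1,1}^{\omega}-\frac1{c_2}\varphi_{2,1}^{\omega}$, $(\phi_{3,1}^{\omega})'=\frac1{c_2}\varphi_{2,1}^{\omega}$; substituting into ${\bf r}'(t)=\sum_i{\bf r}_i(\phi_{i,1}^{\omega})'(t)$ and regrouping by $\varphi_{i,1}^{\omega}$ gives $\frac{\Delta{\bf r}_0}{c_2}\varphi_{0,1}^{\omega}+\frac{c_1}{c_3}\Delta{\bf r}_1\,\varphi_{1,1}^{\omega}+\frac{\Delta{\bf r}_2}{c_2}\varphi_{2,1}^{\omega}$. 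It then remains to recognize the coefficients as $1/\!\int_0^1\varphi_{i,1}^{\omega}(x)\,dx$: by \eqref{eq:cs} one has $\int_0^1\varphi_{0,1}^{\omega}=c_2$ and $\int_0^1\varphi_{1,1}^{\omega}=c_3/c_1$, and since $\varphi_{2,1}^{\omega}(t)=\varphi_{0,1}^{\omega}(1-t)$ by \eqref{eq:U2basis} also $\int_0^1\varphi_{2,1}^{\omega}=c_2$. I expect the only friction here to be the bookkeeping of the constants $c_1,c_2,c_3$ coming from \eqref{equivalenze1} and \eqref{eq:cs}, which I would keep under control by systematically reducing everything to $c_2$ and $c_3/c_1$.

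For (d), evaluating \eqref{eq:U3basis} at $t=0$ gives $\phi_{0,1}^{\omega}(0)=1$ and $\phi_{1,1}^{\omega}(0)=\phi_{2,1}^{\omega}(0)=\phi_{3,1}^{\omega}(0)=0$, so ${\bf r}(0)={\bf r}_0$, and ${\bf r}(1)={\bf r}_3$ follows from (b) or from the analogous evaluation at $t=1$. For the endpoint derivatives I would plug into the formula of (c): from \eqref{eq:U2basis}, $\varphi_{0,1}^{\omega}(0)=1$ and $\varphi_{1,1}^{\omega}(0)=\varphi_{2,1}^{\omega}(0)=0$, whence ${\bf r}'(0)=\Delta{\bf r}_0/\!\int_0^1\varphi_{0,1}^{\omega}=({\bf r}_1-{\bf r}_0)/c_2(\omega)$, and similarly $\varphi_{2,1}^{\omega}(1)=1$, $\varphi_{0,1}^{\omega}(1)=\varphi_{1,1}^{\omega}(1)=0$, whence ${\bf r}'(1)=({\bf r}_3-{\bf r}_2)/c_2(\omega)$. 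The argument is thus a chain of elementary verifications, with the derivative formula (c) the only place carrying real content.
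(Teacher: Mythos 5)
Your proposal is correct and follows essentially the same route as the paper's proof: (a) from non-negativity and the partition of unity, (b) from the basis symmetry $\phi_{i,1}^{\omega}(t)=\phi_{3-i,1}^{\omega}(1-t)$, (c) from the derivative identities expressing $\frac{d}{dt}\phi_{i,1}^{\omega}$ as differences of $\varphi_{i,1}^{\omega}/\int_0^1\varphi_{i,1}^{\omega}$, and (d) as a consequence of (c) together with the endpoint values of the bases. The only difference is that you derive those derivative identities from \eqref{integrali1} and the partition of unity rather than simply asserting them, which is a welcome amount of extra detail, and your bookkeeping of $c_1,c_2,c_3$ is accurate.
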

\begin{proof}
	\Cref{SM:prop:EP4}.
\end{proof}

\subsection{Control polygons of PH curves in ${\cal EP}^{\omega}_1$}

To construct a PH curve in ${\cal EP}^{\omega}_1$, the functions  $a_0,a_1,a_2,a_3$ are chosen in ${\cal A}^{\omega}_1$ and thus
\[
a_k(t)\;=\;a_{k,0}\psi_{0,1}^\omega(t)\;+\; a_{k,1} \psi_{1,1}^{\omega}(t),\qquad k\in\{0,\dots,3\},
\]
for some $a_{k,0}$, $a_{k,1}\in\mathbb{R}$.
Consequently, the associated preimage is
\begin{equation} \label{eq:preimage_ep4}
	\begin{array}{l}
		\mathbf{A}(t)\;=\;\mathbf{A}_0 \psi_{0,1}^{\omega}(t) \;+\; \mathbf{A}_1 \psi_{1,1}^{\omega}(t)
	\end{array}
\end{equation}
where
\begin{equation}\label{eq:Aj_spatial_cubic}
	\mathbf{A}_j\;=\; a_{0,j}\; +\; a_{1,j} \mathbf{i} \;+\; a_{2,j} \mathbf{j} \;+\; a_{3,j} \mathbf{k}, 
	\qquad j\;=\;0,\;1.
\end{equation}

\begin{proposition} \label{prop:Bez_EP1}
	A PH curve $\mathbf{r}(t)$ in ${\cal EP}^{\omega}_1$ can be expressed in the B\'ezier-like form
	$
	\mathbf{r}(t)=\sum_{i=0}^3 \mathbf{r}_i \phi_{i,1}^{\omega}(t),
	$
	with B\'ezier-like control points $\mathbf{r}_i$, $i=1,\ldots,3$ given in terms of the freely chosen integration constant $\mathbf{r}_0$, of the numbers in \eqref{eq:cs} and of the coefficients of the preimage $\mathbf{A}(t)$ in \eqref{eq:preimage_ep4} and \eqref{eq:Aj_spatial_cubic} by
	\begin{equation}\label{eq:cps3d}
		\begin{array}{c}
			\mathbf{r}_1 \;=\; \mathbf{r}_0 \;+\; c_2(\omega) \, \mathbf{A}_0\mathbf{i}\mathbf{A}_0^*,  \qquad
			\mathbf{r}_2 \;=\; \mathbf{r}_1 \;+\; c_3(\omega) \, \frac{1}{2} \, (\mathbf{A}_0\mathbf{i}\mathbf{A}_1^*+\mathbf{A}_1\mathbf{i}\mathbf{A}_0^*),  \\ \\
			\mathbf{r}_3 \;=\; \mathbf{r}_2 \;+\; c_2(\omega) \,  \mathbf{A}_1\mathbf{i}\mathbf{A}_1^*.
		\end{array}
	\end{equation}
\end{proposition}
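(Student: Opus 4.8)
The plan is to compute the hodograph directly from the B-form of the preimage, expand it in the normalized B-basis $\{\varphi^\omega_{j,1}\}_{j=0}^{2}$ of $\mathcal{DEP}^\omega_1$, integrate term by term, and then read off the control points. First I would substitute \eqref{eq:preimage_ep4} into \eqref{eq:hodograph_quaternionic_formg} and use bilinearity of the quaternion product to get
\[
\mathbf{r}'(t)=\mathbf{A}_0\mathbf{i}\mathbf{A}_0^*\,\bigl(\psi^\omega_{0,1}(t)\bigr)^2+\bigl(\mathbf{A}_0\mathbf{i}\mathbf{A}_1^*+\mathbf{A}_1\mathbf{i}\mathbf{A}_0^*\bigr)\,\psi^\omega_{0,1}(t)\psi^\omega_{1,1}(t)+\mathbf{A}_1\mathbf{i}\mathbf{A}_1^*\,\bigl(\psi^\omega_{1,1}(t)\bigr)^2 .
\]
Applying the quadratic identities \eqref{equivalenze1} turns this into
\[
\mathbf{r}'(t)=\mathbf{A}_0\mathbf{i}\mathbf{A}_0^*\,\varphi^\omega_{0,1}(t)+\frac{1}{2}\,c_1(\omega)\bigl(\mathbf{A}_0\mathbf{i}\mathbf{A}_1^*+\mathbf{A}_1\mathbf{i}\mathbf{A}_0^*\bigr)\,\varphi^\omega_{1,1}(t)+\mathbf{A}_1\mathbf{i}\mathbf{A}_1^*\,\varphi^\omega_{2,1}(t),
\]
which in particular re-confirms (cf.\ \cref{prop:As}) that $\mathbf{r}'\in\mathcal{DEP}^\omega_1$, hence $\mathbf{r}\in\mathcal{EP}^\omega_1$ and the B\'ezier-like representation $\mathbf{r}(t)=\sum_{i=0}^3\mathbf{r}_i\phi^\omega_{i,1}(t)$ is well posed.

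Next I would integrate, writing $\mathbf{r}(t)=\mathbf{r}(0)+\int_0^t\mathbf{r}'(x)\,dx$ and invoking the antiderivative formulas \eqref{integrali1}. Since $\phi^\omega_{0,1}(0)=1$ and $\phi^\omega_{i,1}(0)=0$ for $i\ge1$ (endpoint property of the B-basis), the integration constant is $\mathbf{r}(0)=\mathbf{r}_0$; moreover the $c_1(\omega)$ in the denominator of the factor $c_3(\omega)/c_1(\omega)$ in \eqref{integrali1} cancels the $c_1(\omega)$ multiplying the $\varphi^\omega_{1,1}$ term, so that
\[
\mathbf{r}(t)=\mathbf{r}_0+c_2(\omega)\,\mathbf{A}_0\mathbf{i}\mathbf{A}_0^*\sum_{i=1}^3\phi^\omega_{i,1}(t)+\frac{1}{2}\,c_3(\omega)\bigl(\mathbf{A}_0\mathbf{i}\mathbf{A}_1^*+\mathbf{A}_1\mathbf{i}\mathbf{A}_0^*\bigr)\sum_{i=2}^3\phi^\omega_{i,1}(t)+c_2(\omega)\,\mathbf{A}_1\mathbf{i}\mathbf{A}_1^*\,\phi^\omega_{3,1}(t).
\]

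Finally, using the partition-of-unity property $\sum_{i=0}^3\phi^\omega_{i,1}(t)\equiv1$ of the normalized B-basis to rewrite $\mathbf{r}_0=\mathbf{r}_0\sum_{i=0}^3\phi^\omega_{i,1}(t)$, I would collect the coefficient of each $\phi^\omega_{i,1}$; since the $\phi^\omega_{i,1}$ are linearly independent these coefficients are uniquely determined, and they come out precisely as $\mathbf{r}_0$, $\mathbf{r}_0+c_2(\omega)\mathbf{A}_0\mathbf{i}\mathbf{A}_0^*$, $\mathbf{r}_1+\frac{1}{2}c_3(\omega)(\mathbf{A}_0\mathbf{i}\mathbf{A}_1^*+\mathbf{A}_1\mathbf{i}\mathbf{A}_0^*)$ and $\mathbf{r}_2+c_2(\omega)\mathbf{A}_1\mathbf{i}\mathbf{A}_1^*$, which is exactly \eqref{eq:cps3d}.

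I do not expect a genuine obstacle here: the argument is essentially bookkeeping, and the only steps requiring care are keeping track of the scalars $c_1,c_2,c_3$ (in particular the cancellation above and the values $\int_0^1\varphi^\omega_{0,1}=\int_0^1\varphi^\omega_{2,1}=c_2(\omega)$ implicit in \eqref{integrali1}), and carrying out the telescoping rewriting of $\mathbf{r}_0$ and of the partial sums $\sum_{i=1}^3$, $\sum_{i=2}^3$ consistently so that the control points emerge in the stated incremental form $\mathbf{r}_i=\mathbf{r}_{i-1}+\cdots$. As an alternative that avoids the explicit antiderivatives altogether, one may instead match the $\{\varphi^\omega_{j,1}\}$-expansion of $\mathbf{r}'(t)$ displayed above with the derivative formula of \cref{prop:EP4}(c): this identifies $\Delta\mathbf{r}_i\big/\!\int_0^1\varphi^\omega_{i,1}(x)\,dx$ with the three quaternion coefficients, and then \eqref{eq:cs} (which gives $\int_0^1\varphi^\omega_{0,1}=c_2(\omega)$ and $c_1(\omega)\int_0^1\varphi^\omega_{1,1}=c_3(\omega)$) yields \eqref{eq:cps3d} directly.
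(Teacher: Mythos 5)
Your argument is correct and follows exactly the paper's own route: substitute the B-form of the preimage into the quaternionic hodograph formula, convert $\psi$-products to the $\varphi$-basis via \eqref{equivalenze1}, integrate using \eqref{integrali1} (with the $c_1(\omega)$ cancellation you note), and collect coefficients. The only difference is that you spell out the telescoping/collection step that the paper leaves implicit; no discrepancy or gap.
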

\begin{proof}
	\Cref{SM:prop:Bez_EP1}.
\end{proof}

\begin{remark}
	Since, from 
	\eqref{eq:cs}, $\lim_{\omega \rightarrow 0}c_2(\omega)=\lim_{\omega \rightarrow 0}c_3(\omega)=1/3$, 
	\eqref{eq:cps3d} 
	recover the well-known results of the cubic polynomial case when $\omega \rightarrow 0$, see \cite{Fbook}.
\end{remark}

\subsection{Parametric speed and arc length in ${\cal EP}^{\omega}_1$}

\begin{proposition} \label{prop:par_speed_EP1}
	The parametric speed of $\mathbf{r}(t)$ is a function in ${\cal DEP}^{\omega}_1$ with the explicit expression
	$
	\sigma(t)= \sum_{i=0}^2 \sigma_i  \varphi_{i,1}^{\omega}(t),
	$
	where
	\begin{equation}\label{sigmacoeff_2}
		\begin{array}{c}
			\sigma_0\;=\;|\mathbf{A}_0|^2,\qquad
			\sigma_1\;=\; { c_1(\omega)} \, \frac{1}{2}  \, (\mathbf{A}_1 \mathbf{A}_0^* + \mathbf{A}_0 \mathbf{A}_1^*),\qquad
			\sigma_2\;=\;|\mathbf{A}_1|^2.
		\end{array}
	\end{equation}
\end{proposition}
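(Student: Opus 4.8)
The plan is to start from the quaternionic representation of the parametric speed established in \eqref{eq:speed_quaternionic_formg}, namely $\sigma(t)=|\mathbf{A}(t)|^2=\mathbf{A}(t)\mathbf{A}^*(t)$, and to substitute into it the B-basis expansion \eqref{eq:preimage_ep4}--\eqref{eq:Aj_spatial_cubic} of the preimage, $\mathbf{A}(t)=\mathbf{A}_0\psi_{0,1}^{\omega}(t)+\mathbf{A}_1\psi_{1,1}^{\omega}(t)$. Since the basis functions $\psi_{0,1}^{\omega}$, $\psi_{1,1}^{\omega}$ are real-valued scalars, they commute with quaternions and may be factored out of quaternion products, so that
\[
\mathbf{A}(t)\mathbf{A}^*(t)\;=\;\mathbf{A}_0\mathbf{A}_0^*\,\bigl(\psi_{0,1}^{\omega}(t)\bigr)^2\;+\;\bigl(\mathbf{A}_0\mathbf{A}_1^*+\mathbf{A}_1\mathbf{A}_0^*\bigr)\,\psi_{0,1}^{\omega}(t)\,\psi_{1,1}^{\omega}(t)\;+\;\mathbf{A}_1\mathbf{A}_1^*\,\bigl(\psi_{1,1}^{\omega}(t)\bigr)^2.
\]

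Next I would observe that every coefficient appearing above is a real scalar: $\mathbf{A}_j\mathbf{A}_j^*=|\mathbf{A}_j|^2$ for $j=0,1$, and $\mathbf{A}_0\mathbf{A}_1^*+\mathbf{A}_1\mathbf{A}_0^*=\mathbf{A}_0\mathbf{A}_1^*+(\mathbf{A}_0\mathbf{A}_1^*)^*=2\,\mathrm{Re}(\mathbf{A}_0\mathbf{A}_1^*)\in\RR$. This confirms a priori that $\sigma(t)$ is a genuine real-valued function, as it must be, and matches the symmetric form of $\sigma_1$ stated in \eqref{sigmacoeff_2}.

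Finally, I would invoke the product identities \eqref{equivalenze1}, i.e. $(\psi_{0,1}^{\omega})^2=\varphi_{0,1}^{\omega}$, $\ \psi_{0,1}^{\omega}\psi_{1,1}^{\omega}=\tfrac12\,c_1(\omega)\,\varphi_{1,1}^{\omega}$ and $(\psi_{1,1}^{\omega})^2=\varphi_{2,1}^{\omega}$, to re-express the three terms in the normalized B-basis $\{\varphi_{i,1}^{\omega}\}_{i=0}^{2}$ of ${\cal DEP}^{\omega}_1$, obtaining $\sigma(t)=\sum_{i=0}^2\sigma_i\,\varphi_{i,1}^{\omega}(t)$ with $\sigma_0=|\mathbf{A}_0|^2$, $\sigma_1=\tfrac12\,c_1(\omega)\,(\mathbf{A}_1\mathbf{A}_0^*+\mathbf{A}_0\mathbf{A}_1^*)$, $\sigma_2=|\mathbf{A}_1|^2$, exactly as claimed. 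Since $\varphi_{0,1}^{\omega},\varphi_{1,1}^{\omega},\varphi_{2,1}^{\omega}\in{\cal DEP}^{\omega}_1$, it follows that $\sigma\in{\cal DEP}^{\omega}_1$. There is no genuine obstacle in this argument: the only point requiring care is the bookkeeping of noncommutativity in the quaternion products, which is harmless precisely because the $\psi$'s are scalars; everything else is a direct substitution using \eqref{eq:speed_quaternionic_formg} and \eqref{equivalenze1}.
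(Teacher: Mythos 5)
Your proposal is correct and follows exactly the paper's own argument: substitute the expansion \eqref{eq:preimage_ep4} into $\sigma(t)=\mathbf{A}(t)\mathbf{A}^*(t)$ from \eqref{eq:speed_quaternionic_formg}, expand the quaternion product (the scalar basis functions commute), and convert $(\psi_{0,1}^{\omega})^2$, $\psi_{0,1}^{\omega}\psi_{1,1}^{\omega}$, $(\psi_{1,1}^{\omega})^2$ into the $\varphi_{i,1}^{\omega}$ basis via \eqref{equivalenze1}. The added observation that $\mathbf{A}_0\mathbf{A}_1^*+\mathbf{A}_1\mathbf{A}_0^*=2\,\mathrm{Re}(\mathbf{A}_0\mathbf{A}_1^*)$ is real is a harmless (and correct) sanity check not spelled out in the paper.
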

\begin{proof}
	\Cref{SM:prop:par_speed_EP1}.
\end{proof}

\begin{proposition} \label{prop:arc_EP1}
	The arc length function of $\mathbf{r}(t)$ is a function in ${\cal EP}^{\omega}_1$ having the expression
	$
	s(t)= \sum_{i=0}^3 s_i  \phi_{i,1}^{\omega}(t),
	$
	where
	$$
	{
		s_0\;=\; 0},
	\qquad
	s_1\;=\; s_0 + \sigma_0 c_2(\omega), \qquad
	s_2\;=\; s_1 +  \sigma_1 \frac{c_3(\omega)}{c_1(\omega)}, \qquad
	s_3\;=\; s_2 + \sigma_2 c_2(\omega).
	$$
\end{proposition}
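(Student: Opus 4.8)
The plan is to obtain $s(t)$ by straightforward integration of the parametric speed, $s(t)=\int_0^t\sigma(x)\,dx$, using the explicit B-basis representation of $\sigma$ in ${\cal DEP}^\omega_1$ from \cref{prop:par_speed_EP1} together with the antiderivative identities \eqref{integrali1}. Concretely, I would start from $\sigma(t)=\sum_{i=0}^2\sigma_i\varphi_{i,1}^\omega(t)$, integrate term by term to get $s(t)=\sum_{i=0}^2\sigma_i\int_0^t\varphi_{i,1}^\omega(x)\,dx$, and substitute the three formulas of \eqref{integrali1}. Since those formulas are written with the definite integral from $0$, the resulting $s(t)$ automatically vanishes at $t=0$, which fixes the integration constant and immediately gives $s_0=0$; moreover, because the $\phi_{i,1}^\omega$ span ${\cal EP}^\omega_1$, the resulting expression exhibits $s$ as an element of ${\cal EP}^\omega_1$, as claimed.

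The only remaining task is to collect the coefficient of each $\phi_{i,1}^\omega(t)$. The term $\sigma_0c_2(\omega)\sum_{i=1}^3\phi_{i,1}^\omega$ contributes to $\phi_{1,1}^\omega,\phi_{2,1}^\omega,\phi_{3,1}^\omega$; the term $\sigma_1\tfrac{c_3(\omega)}{c_1(\omega)}\sum_{i=2}^3\phi_{i,1}^\omega$ contributes to $\phi_{2,1}^\omega,\phi_{3,1}^\omega$; and $\sigma_2c_2(\omega)\phi_{3,1}^\omega$ contributes only to $\phi_{3,1}^\omega$. Grouping like terms gives coefficient $0$ for $\phi_{0,1}^\omega$, coefficient $\sigma_0c_2(\omega)$ for $\phi_{1,1}^\omega$, coefficient $\sigma_0c_2(\omega)+\sigma_1\tfrac{c_3(\omega)}{c_1(\omega)}$ for $\phi_{2,1}^\omega$, and coefficient $\sigma_0c_2(\omega)+\sigma_1\tfrac{c_3(\omega)}{c_1(\omega)}+\sigma_2c_2(\omega)$ for $\phi_{3,1}^\omega$; rewriting these in the telescoping form $s_1=s_0+\sigma_0c_2(\omega)$, $s_2=s_1+\sigma_1\tfrac{c_3(\omega)}{c_1(\omega)}$, $s_3=s_2+\sigma_2c_2(\omega)$ yields exactly the stated expressions.

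There is no genuine obstacle here: the result is a bookkeeping computation, and the only points deserving a little care are applying the identities \eqref{integrali1} with the correct constants $c_1,c_2,c_3$ and tracking the truncated sums $\sum_{i=1}^3$ and $\sum_{i=2}^3$ when collecting terms. As an alternative (and a useful cross-check), one could instead verify the claim via \cref{prop:EP4}(c): differentiating the candidate $s(t)=\sum_{i=0}^3 s_i\phi_{i,1}^\omega(t)$ gives $s'(t)=\sum_{i=0}^2 \tfrac{\Delta s_i}{\int_0^1\varphi_{i,1}^\omega(x)\,dx}\varphi_{i,1}^\omega(t)$, and matching this with $\sigma(t)=\sum_{i=0}^2\sigma_i\varphi_{i,1}^\omega(t)$ forces $\Delta s_i=\sigma_i\int_0^1\varphi_{i,1}^\omega(x)\,dx$, where $\int_0^1\varphi_{0,1}^\omega=\int_0^1\varphi_{2,1}^\omega=c_2(\omega)$ and $\int_0^1\varphi_{1,1}^\omega=c_3(\omega)/c_1(\omega)$ by \eqref{eq:cs} and the symmetry $\varphi_{2,1}^\omega(t)=\varphi_{0,1}^\omega(1-t)$; together with $s(0)=s_0=0$ from the endpoint conditions, this reproduces the recursion, and uniqueness of the B-basis representation completes the argument.
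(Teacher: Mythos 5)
Your argument is correct and is essentially identical to the paper's proof: integrate $\sigma(t)=\sum_{i=0}^2\sigma_i\varphi_{i,1}^{\omega}(t)$ term by term, apply the antiderivative identities \eqref{integrali1}, and collect the coefficients of each $\phi_{i,1}^{\omega}$ to obtain the telescoping form with $s_0=0$. The alternative verification via the derivative formula of \cref{prop:EP4}(c) is a sound cross-check but not needed.
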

\begin{proof}
	\Cref{SM:prop:arc_EP1}.
\end{proof}

\begin{corollary}
	The total arc length  of $\mathbf{r}(t)$ is
	\begin{equation}\label{totarclength}
		\begin{array}{rcl}
			{ s(1)}&=&{s_3}\;=\;(\sigma_0+\sigma_2) \, c_2(\omega) \;+\; \sigma_1 \, \frac{c_3(\omega)}{c_1(\omega)} \\ \\
			&=& c_2(\omega) \, \left( |\mathbf{A}_0|^2 + |\mathbf{A}_1|^2 \right)
			\;+\; c_3(\omega) \, \frac{1}{2} \, (\mathbf{A}_1 \mathbf{A}_0^* + \mathbf{A}_0 \mathbf{A}_1^*).
		\end{array}
	\end{equation}
\end{corollary}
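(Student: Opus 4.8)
The plan is to derive this directly from \cref{prop:arc_EP1,prop:par_speed_EP1}, since the Corollary is little more than an evaluation-plus-substitution exercise. First I would evaluate the arc length function $s(t)=\sum_{i=0}^3 s_i\,\phi_{i,1}^{\omega}(t)$ from \cref{prop:arc_EP1} at $t=1$. Because $\{\phi_{i,1}^{\omega}\}_{i=0}^3$ is a normalized B-basis of ${\cal EP}^{\omega}_1$ and, by the endpoint conditions in \cref{prop:EP4}(d) (equivalently, by the closed forms in \eqref{eq:U3basis}), one has $\phi_{i,1}^{\omega}(1)=0$ for $i=0,1,2$ and $\phi_{3,1}^{\omega}(1)=1$, it follows that $s(1)=s_3$. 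This justifies the first equality $s(1)=s_3$.

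Next I would unfold the telescoping recursion for the $s_i$ given in \cref{prop:arc_EP1}. Starting from $s_0=0$ and adding successively $\sigma_0 c_2(\omega)$, $\sigma_1\,c_3(\omega)/c_1(\omega)$, and $\sigma_2 c_2(\omega)$, one obtains
\[
s_3\;=\;s_0+\sigma_0 c_2(\omega)+\sigma_1\frac{c_3(\omega)}{c_1(\omega)}+\sigma_2 c_2(\omega)\;=\;(\sigma_0+\sigma_2)\,c_2(\omega)+\sigma_1\,\frac{c_3(\omega)}{c_1(\omega)},
\]
which is the second expression in \eqref{totarclength}.

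Finally I would substitute the explicit parametric-speed coefficients from \cref{prop:par_speed_EP1}, namely $\sigma_0=|\mathbf{A}_0|^2$, $\sigma_2=|\mathbf{A}_1|^2$, and $\sigma_1=c_1(\omega)\,\tfrac12(\mathbf{A}_1\mathbf{A}_0^*+\mathbf{A}_0\mathbf{A}_1^*)$. The first two give $(\sigma_0+\sigma_2)c_2(\omega)=c_2(\omega)\bigl(|\mathbf{A}_0|^2+|\mathbf{A}_1|^2\bigr)$, while in the middle term the factor $c_1(\omega)$ in $\sigma_1$ cancels against the $1/c_1(\omega)$, leaving $\sigma_1\,c_3(\omega)/c_1(\omega)=c_3(\omega)\,\tfrac12(\mathbf{A}_1\mathbf{A}_0^*+\mathbf{A}_0\mathbf{A}_1^*)$. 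Adding the two pieces yields the last line of \eqref{totarclength}.

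There is essentially no obstacle here: the only point that is not purely mechanical is the evaluation $\phi_{i,1}^{\omega}(1)=\delta_{i,3}$, which is immediate from the normalized B-basis property together with the endpoint interpolation already recorded in \cref{prop:EP4}(d) (or can be read off directly from \eqref{eq:U3basis}). Everything else is a telescoping sum and a substitution in which the $c_1(\omega)$ factors cancel; the assumption $c_1(\omega)\neq 0$ for $\omega\in\RR^+$, noted in the Remark following \eqref{eq:cs}, guarantees this step is legitimate.
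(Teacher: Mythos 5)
Your proposal is correct and follows exactly the route the paper intends: the corollary is an immediate consequence of \cref{prop:arc_EP1,prop:par_speed_EP1}, obtained by evaluating $s(1)=s_3$ via $\phi_{i,1}^{\omega}(1)=\delta_{i,3}$ (equivalently, \cref{prop:EP4}(d)), telescoping the recursion for the $s_i$, and substituting the $\sigma_i$ so that the $c_1(\omega)$ factors cancel. Nothing is missing.
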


\begin{remark}
	When $\omega \rightarrow 0$, the total arc length formula in \eqref{totarclength} yields $s(1)= \left( |\mathbf{A}_0|^2 + |\mathbf{A}_1|^2 \right)/3 +\, (\mathbf{A}_1 \mathbf{A}_0^* + \mathbf{A}_0 \mathbf{A}_1^*)/6$, thus recovering the well-known result of the cubic polynomial case \cite{Fbook}.
\end{remark}


\section{PH curves in ${\cal EP}^{\omega}_2$}\label{sec4}

\subsection{The normalized B-basis of the space ${\cal EP}^{\omega}_2$}

By squaring an arbitrary function $f \in {\cal A}^{\omega}_2=span \{ 1, e^{\omega t}, e^{-\omega t} \}$, we obtain a function $f^2$ that belongs to the exponential space ${\cal DEP}^{\omega}_2=span \{ 1, e^{\omega t}, e^{-\omega t}, e^{2\omega t}, e^{-2\omega t} \}$. Since ${\cal A}^{\omega}_2=\mathcal{DEP}^\omega_1$ we can choose $\psi^\omega_{j,2}(t)=\varphi^\omega_{j,1}(t)$, $j\in\{0,1,2\}$ as in \eqref{eq:U2basis}.
Then, ${\cal DEP}^{\omega}_2$ is an extended Chebyshev space that, on the interval $[0,1]$, admits a normalized B-basis of the form
$$
\begin{array}{c}
	\varphi_{0,2}^{\omega}(t)\;=\;\left( \psi_{0,2}^{\omega}(t) \right)^2, \qquad
	\varphi_{1,2}^{\omega}(t)\;=\;2 \psi_{0,2}^{\omega}(t) \psi_{1,2}^{\omega}(t), \\ \\
	\varphi_{2,2}^{\omega}(t)\;=\; \left( \psi_{1,2}^{\omega}(t) \right)^2 \;+\; 2 \psi_{0,2}^{\omega}(t) \psi_{2,2}^{\omega}(t), \\ \\
	\varphi_{3,2}^{\omega}(t)\;=\;2 \psi_{1,2}^{\omega}(t) \psi_{2,2}^{\omega}(t), \qquad
	\varphi_{4,2}^{\omega}(t)\;=\; \left( \psi_{2,2}^{\omega}(t) \right)^2.
\end{array}
$$
The inverse relationship between
the exponential functions $\{\varphi^\omega_{j,2}(t)\}_{j=0}^{4}$ 
and the exponential functions $\{\psi^\omega_{j,2}(t)\}_{j=0}^{2}$  
is instead given by
\begin{equation}\label{equivalenze2}
	\begin{array}{c}
		\left( \psi_{0,2}^{\omega}(t) \right)^2\;=\; \varphi_{0,2}^{\omega}(t), \qquad
		\psi_{0,2}^{\omega}(t) \psi_{1,2}^{\omega}(t) \;=\; \frac{1}{2} \varphi_{1,2}^{\omega}(t), \\ \\
		\left( \psi_{1,2}^{\omega}(t) \right)^2\;=\; q_0(\omega)  \varphi_{2,2}^{\omega}(t), \qquad
		\psi_{0,2}^{\omega}(t) \psi_{2,2}^{\omega}(t) \;=\; \frac{1}{2} q_1(\omega)  \varphi_{2,2}^{\omega}(t), \\ \\
		\psi_{1,2}^{\omega}(t) \psi_{2,2}^{\omega}(t) \;=\; \frac{1}{2} \varphi_{3,2}^{\omega}(t), \qquad
		\left( \psi_{2,2}^{\omega}(t) \right)^2\;=\; \varphi_{4,2}^{\omega}(t), \\ \\
		\qquad \text{with} \qquad
		q_0(\omega)\;=\;\frac{\cosh(\omega) + 1}{\cosh(\omega) + 2}, \qquad q_1(\omega)\;=\;\frac{1}{\cosh(\omega) + 2}.
	\end{array}
\end{equation}
The antiderivative of a function in ${\cal DEP}^{\omega}_2$ is an exponential-polynomial function that belongs to the order-6 exponential-polynomial space ${\cal EP}^{\omega}_2$. 
The exponential-polynomial functions
\begin{equation}\label{eq:U5basis}
	\resizebox{0.915\textwidth}{!}{$\begin{array}{c}
			\phi_{0,2}^{\omega}(t)\;=\;\frac{g_0(\omega-\omega t)}{g_0(\omega)} , \qquad
			
			\phi_{1,2}^{\omega}(t)\;=\;g_1(\omega) \sinh\left(\frac{\omega}{2} \right) \, \left( \sinh^4 \left(\frac{\omega-\omega t}{2} \right) - \sinh^4 \left(\frac{\omega}{2} \right) \, \frac{g_0(\omega-\omega t)}{g_0(\omega)} \right), \\ \\
			
			\phi_{2,2}^{\omega}(t)\;=\;g_2(\omega)  \, \left( -16 \sinh^3 \left(\frac{\omega-\omega t}{2} \right) \, \sinh \left(\frac{\omega t}{2} \right) +
			g_1(\omega) g_0(\omega) \sinh^4 \left(\frac{\omega-\omega t}{2} \right) - g_1(\omega) \sinh^4 \left(\frac{\omega}{2} \right) \, g_0(\omega-\omega t) \right), \\ \\
			
			\phi_{3,2}^{\omega}(t)\;=\;g_2(\omega)  \, \left( -16 \sinh^3 \left(\frac{\omega t}{2} \right) \, \sinh \left(\frac{\omega-\omega t}{2} \right) +
			g_1(\omega) g_0(\omega) \sinh^4 \left(\frac{\omega t}{2} \right) - g_1(\omega) \sinh^4 \left(\frac{\omega}{2} \right) \, g_0(\omega t) \right), \\ \\
			
			\phi_{4,2}^{\omega}(t)\;=\;g_1(\omega) \sinh \left(\frac{\omega}{2} \right) \, \left( \sinh^4 \left(\frac{\omega t}{2} \right) -\sinh^4 \left(\frac{\omega}{2} \right) \, \frac{g_0(\omega t)}{g_0(\omega)}  \right), \qquad
			
			\phi_{5,2}^{\omega}(t)\;=\;\frac{g_0(\omega t)}{g_0(\omega)},
		\end{array}$}
\end{equation}
with
$$
\begin{array}{c}
	g_0(\omega)=3 \omega + \sinh(\omega)  (\cosh(\omega)-4), \qquad
	
	g_1(\omega)=  \frac{4}{ \sinh(\frac{\omega}{2}) \left( \cosh(\omega)- 3 \omega \coth(\frac{\omega}{2})+5 \right)  }, \\ \\
	
	g_2(\omega)\;=\; \frac{\sinh(\frac{\omega}{2})}{3 \big(3 \sinh(\omega) - \omega(\cosh(\omega)+2) \big)},
\end{array}
$$
define a normalized B-basis of the extended Chebyshev space ${\cal EP}^{\omega}_2$ on $[0,1]$. For later use we observe that for the antiderivatives of the basis functions of ${\cal DEP}^{\omega}_2$ we can write
\begin{equation}\label{integrali2}
	\begin{array}{c}
		\int_0^t \varphi_{0,2}^{\omega}(x) dx\;=\; {q_2(\omega) \; \sum_{i=1}^{5} \phi_{i,2}^{\omega}(t)},  \quad
		
		\int_0^t \varphi_{1,2}^{\omega}(x) dx\;=\; {q_3(\omega) \; \sum_{i=2}^{5} \phi_{i,2}^{\omega}(t)},  \\ \\
		
		\int_0^t \varphi_{2,2}^{\omega}(x) dx\;=\;  {\frac{q_4(\omega)}{q_1(\omega)} \; \sum_{i=3}^{5} \phi_{i,2}^{\omega}(t)},  \\  \\
		
		\int_0^t \varphi_{3,2}^{\omega}(x) dx\;=\; {q_3(\omega) \; \sum_{i=4}^{5} \phi_{i,2}^{\omega}(t)},\quad 
		
		\int_0^t \varphi_{4,2}^{\omega}(x) dx\;=\; q_2(\omega)\; \phi_{5,2}^{\omega}(t),
	\end{array}
\end{equation}
with
\begin{equation}\label{ghj}
	\begin{array}{c}
		q_2(\omega) {\;=\; \int_{0}^1\;\varphi^\omega_{0,2}(t)\;dt } \;=\; \frac{g_0(\omega)}{2 \omega (\cosh(\omega) - 1)^2}, \\ \\
		
		q_3(\omega) {\;=\; \int_{0}^1\;\varphi^\omega_{1,2}(t)\;dt }\;=\;   \frac{5 \sinh(\omega)-3\omega +(\sinh(\omega)-3\omega) \cosh(\omega)}{ \omega (\cosh(\omega) - 1)^2}, \\ \\
		
		q_4(\omega) {\;=\; q_1(\omega)\;\int_{0}^1\;\varphi^\omega_{2,2}(t)\;dt } \;=\; \frac{\omega (2+\cosh(\omega)) -3 \sinh(\omega)}{\omega (\cosh(\omega) - 1)^2}.
	\end{array}
\end{equation}

{
	\begin{remark}
		For all $\omega \in \mathbb{R}^+$, we always have $g_0(\omega), g_1(\omega), g_2(\omega) \neq 0$ as well as $q_0(\omega), q_1(\omega), q_2(\omega), q_3(\omega), q_4(\omega) \neq 0$.
	\end{remark}
}


\subsection{Geometric properties of B\'ezier-like curves in ${\cal EP}^{\omega}_2$}

\begin{definition}[B\'ezier-like curves in ${\cal EP}^{\omega}_2$]\label{def:UEcurveII}
	Given a control polygon with vertices ${\bf r}_i \in \RR^d$, $i=0,\dots,5$, the associated B\'ezier-like curve in ${\cal EP}^{\omega}_2$ is defined as
	\begin{equation} \label{eq:quint_Bez}
		{\bf r}(t)\;=\;\sum_{i=0}^5 \, {\bf r}_i \, \phi_{i,2}^{\omega}(t), \qquad t\in[0,1].
	\end{equation}
\end{definition}

\begin{proposition}[Properties of B\'ezier-like curves in ${\cal EP}^{\omega}_2$]\label{prop:EP6} \newline The B\'ezier-like curve in \eqref{eq:quint_Bez} has the following properties:
	\begin{itemize}
		\item[(a)] \emph{Convex hull property and geometric invariance property}. The entire curve lies inside the convex hull of its control points and its shape is independent of the coordinate system, i.e., it is scale and translation invariant.
		\item[(b)] \emph{Symmetry}. The control points ${\bf r}_0, \, {\bf r}_1, \, ..., \, {\bf r}_5$ and ${\bf r}_5, \, ..., \, {\bf r}_1, \, {\bf r}_0$ define the same curve with respect to different parameterizations, i.e., 
		\[
		\sum_{i=0}^5 \, {\bf r}_i \phi_{i,2}^{\omega}(t)\;=\;\sum_{i=0}^5 \, {\bf r}_{5-i} \phi_{i,2}^{\omega}(1-t), \qquad  t \in [0,1].
		\]
		\item[(c)] \emph{Derivative formula}.
		$$
		\frac{d}{dt}{\bf r}(t)\;=\;\sum_{i=0}^{4} \, 
		{\frac{\Delta{\bf r}_i}{\int_{0}^1\;\varphi^\omega_{i,2}(x)\;dx} } \, \varphi_{i,2}^{\omega}(t), \qquad  t \in[0,1],
		$$
		where, for all $i=0,\ldots,4$, $\Delta{\bf r}_i={\bf r}_{i+1}-{\bf r}_i$. 
		\item[(d)] \emph{Endpoint conditions.}
		$$
		\begin{array}{cc}
			{\bf r}(0)\;=\;{\bf r}_0, \qquad & {\bf r}'(0)\;=\; 
			\frac{1}{q_2(\omega)} ({\bf r}_1-{\bf r}_0),\\ \\
			{\bf r}(1)\;=\;{\bf r}_5, \qquad & {\bf r}'(1)\;=\;
			\frac{1}{q_2(\omega)} ({\bf r}_{5}-{\bf r}_{4}).
		\end{array}
		$$
	\end{itemize}
\end{proposition}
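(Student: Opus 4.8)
The plan is to follow, essentially verbatim, the pattern used for the order-four case in \cref{prop:EP4}, exploiting two facts that are already recorded in the excerpt: that $\{\phi^\omega_{i,2}\}_{i=0}^{5}$ is a \emph{normalized} B-basis of $\mathcal{EP}^\omega_2$ on $[0,1]$ (so each $\phi^\omega_{i,2}$ is non-negative on $[0,1]$ and $\sum_{i=0}^{5}\phi^\omega_{i,2}\equiv 1$), and the antiderivative identities \eqref{integrali2} together with the explicit values \eqref{ghj}. Part (a) is then immediate: non-negativity plus partition of unity give the convex-hull property, and partition of unity alone (a fortiori) forces ${\bf r}(t)$ to transform consistently under translations and scalings of the coordinate system.

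For part (b), the plan is to verify the pointwise identity $\phi^\omega_{i,2}(t)=\phi^\omega_{5-i,2}(1-t)$ for the three pairs $i\in\{0,1,2\}$ directly from the closed forms \eqref{eq:U5basis}: replacing $t$ by $1-t$ sends $\omega t\mapsto\omega-\omega t$ and $\omega-\omega t\mapsto\omega t$, so $g_0(\omega-\omega t)\leftrightarrow g_0(\omega t)$ and $\sinh^k(\tfrac{\omega t}{2})\leftrightarrow\sinh^k(\tfrac{\omega-\omega t}{2})$, which exchanges $\phi^\omega_{0,2}\leftrightarrow\phi^\omega_{5,2}$, $\phi^\omega_{1,2}\leftrightarrow\phi^\omega_{4,2}$ and $\phi^\omega_{2,2}\leftrightarrow\phi^\omega_{3,2}$. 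Substituting into $\sum_i{\bf r}_i\phi^\omega_{i,2}(t)$ and relabeling $i\mapsto 5-i$ then gives the claimed symmetry. The same computation applied to the $\varphi$-basis (or the identities in \eqref{equivalenze2}) shows $\varphi^\omega_{j,2}(t)=\varphi^\omega_{4-j,2}(1-t)$, hence $\int_0^1\varphi^\omega_{j,2}=\int_0^1\varphi^\omega_{4-j,2}$; this observation will be needed in part (c).

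For part (c), I would differentiate the five identities \eqref{integrali2}, obtaining each $\varphi^\omega_{j,2}$ as a fixed scalar multiple of a tail sum $\sum_{i\ge k}(\phi^\omega_{i,2})'$; combined with $\sum_{i=0}^{5}(\phi^\omega_{i,2})'\equiv 0$ (from partition of unity), this is a triangular system that I solve by taking successive differences, yielding $(\phi^\omega_{0,2})'=-\varphi^\omega_{0,2}/q_2$, $(\phi^\omega_{1,2})'=\varphi^\omega_{0,2}/q_2-\varphi^\omega_{1,2}/q_3$, $(\phi^\omega_{2,2})'=\varphi^\omega_{1,2}/q_3-q_1\varphi^\omega_{2,2}/q_4$, and the mirror images for $i=3,4,5$ by the symmetry of part (b). Substituting into ${\bf r}'(t)=\sum_{i=0}^{5}{\bf r}_i(\phi^\omega_{i,2})'(t)$ and reindexing (Abel summation) makes the coefficient of $\varphi^\omega_{j,2}$ equal to $\Delta{\bf r}_j$ times the relevant reciprocal constant ($1/q_2$, $1/q_3$, $q_1/q_4$, $1/q_3$, $1/q_2$ for $j=0,\dots,4$); finally \eqref{ghj} identifies $q_2=\int_0^1\varphi^\omega_{0,2}$, $q_3=\int_0^1\varphi^\omega_{1,2}$, $q_4/q_1=\int_0^1\varphi^\omega_{2,2}$, and the integral symmetry from (b) gives $\int_0^1\varphi^\omega_{3,2}=q_3$, $\int_0^1\varphi^\omega_{4,2}=q_2$, so all five coefficients collapse to $\Delta{\bf r}_j/\int_0^1\varphi^\omega_{j,2}(x)\,dx$, as claimed. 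Part (d) then follows by evaluation: $\phi^\omega_{i,2}(0)=\delta_{i0}$ and $\phi^\omega_{i,2}(1)=\delta_{i5}$ (from the endpoint property of a normalized B-basis, or by inspection of \eqref{eq:U5basis} using $g_0(0)=0$), giving ${\bf r}(0)={\bf r}_0$ and ${\bf r}(1)={\bf r}_5$, while plugging $t=0$ and $t=1$ into the derivative formula of part (c) and using $\varphi^\omega_{j,2}(0)=\delta_{j0}$, $\varphi^\omega_{j,2}(1)=\delta_{j4}$ yields ${\bf r}'(0)=\Delta{\bf r}_0/q_2(\omega)$ and ${\bf r}'(1)=\Delta{\bf r}_4/q_2(\omega)$. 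The only genuinely delicate point is the bookkeeping in part (c) — inverting \eqref{integrali2} and then checking that the normalizing scalars that emerge really are the integrals $\int_0^1\varphi^\omega_{j,2}$ for \emph{all} $j=0,\dots,4$ — and this is exactly what the integral-symmetry consequence of part (b) takes care of.
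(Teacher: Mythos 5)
Your proposal is correct and follows essentially the same route as the paper: (a) from non-negativity plus partition of unity, (b) from the pointwise symmetry $\phi^\omega_{i,2}(t)=\phi^\omega_{5-i,2}(1-t)$, (c) from the two-term telescoping expressions for $\frac{d}{dt}\phi^\omega_{i,2}$, and (d) by evaluating at the endpoints. The only difference is one of presentation: the paper simply states the derivative identities for the $\phi^\omega_{i,2}$, whereas you derive them by differentiating \eqref{integrali2} and using $\sum_{i}(\phi^\omega_{i,2})'\equiv 0$, together with the integral symmetry $\int_0^1\varphi^\omega_{j,2}=\int_0^1\varphi^\omega_{4-j,2}$ to identify all five normalizing constants — a worthwhile piece of bookkeeping that the paper leaves implicit.
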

\begin{proof}
	\Cref{SM:prop:EP6}.
\end{proof}


\subsection{Control polygons of PH curves in ${\cal EP}^{\omega}_2$}

To construct a spatial PH curve in ${\cal EP}^{\omega}_2$, the functions $a_0,a_1,a_2,a_3$ are chosen in ${\cal A}^{\omega}_2$ and thus
$
a_k(t)=\sum_{j=0}^2a_{k,j}\psi^\omega_{j,2}(t)$, $k\in\{0,\dots,3\},
$
for some $a_{k,j}\in\mathbb{R}$. Consequently, the associated preimage is
\begin{equation}\label{eq:At_spatial_quintic}
	\begin{array}{l}
		\mathbf{A}(t)\;=\;\sum_{j=0}^2\;\mathbf{A}_j\psi^\omega_{j,2}(t)
	\end{array}
\end{equation}
where
\begin{equation}\label{eq:Aj_spatial_quintic}
	\mathbf{A}_j\;=\; a_{0,j} \;+\; a_{1,j} \mathbf{i} \;+\; a_{2,j} \mathbf{j} \;+\; a_{3,j} \mathbf{k}, \qquad  j=0,1,2.
\end{equation}
\begin{proposition} \label{prop:Bez_EP2}
	A PH curve $\mathbf{r}(t)$ in ${\cal EP}^{\omega}_2$ can be expressed in the B\'ezier-like form
	$
	\mathbf{r}(t)=\sum_{i=0}^5 \mathbf{r}_i \phi_{i,2}^{\omega}(t),
	$
	with B\'ezier-like control points $\mathbf{r}_i$, $i=1,\dots,5$ given in terms of the freely chosen integration constant $\mathbf{r}_0$, of the numbers in \eqref{equivalenze2} and \eqref{ghj} and  of the coefficients of the preimage $\mathbf{A}(t)$ in \eqref{eq:At_spatial_quintic} and \eqref{eq:Aj_spatial_quintic} by
	\begin{equation}\label{eq:cps3dII}
		\begin{array}{c}
			\mathbf{r}_1 \;=\; \mathbf{r}_0 \;+\; q_2(\omega) \, \mathbf{A}_0\mathbf{i}\mathbf{A}_0^*, \qquad
			
			\mathbf{r}_2 \;=\; \mathbf{r}_1 \;+\; q_3(\omega) \, \frac{1}{2} \, (\mathbf{A}_0\mathbf{i}\mathbf{A}_1^*+\mathbf{A}_1\mathbf{i}\mathbf{A}_0^*), \\ \\
			
			\mathbf{r}_3 \;=\; \mathbf{r}_2 \;+\; q_4(\omega) \, \left( \frac{1}{2}(\mathbf{A}_0\mathbf{i}\mathbf{A}_2^*+\mathbf{A}_2\mathbf{i}\mathbf{A}_0^*)+
			\frac{q_0(\omega)}{q_1(\omega)} \,   \mathbf{A}_1\mathbf{i}\mathbf{A}_1^* \right),  \\ \\
			
			\mathbf{r}_4 \;=\; \mathbf{r}_3 \;+\; q_3(\omega) \, \frac{1}{2} \, (\mathbf{A}_1\mathbf{i}\mathbf{A}_2^*+\mathbf{A}_2\mathbf{i}\mathbf{A}_1^*),  \qquad
			
			\mathbf{r}_5 \;=\; \mathbf{r}_4 \;+\; q_2(\omega) \, \mathbf{A}_2\mathbf{i}\mathbf{A}_2^*.
		\end{array}
	\end{equation}
\end{proposition}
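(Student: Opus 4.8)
The plan is to follow the same route as in the proof of \cref{prop:Bez_EP1}: expand the quaternionic hodograph \eqref{eq:hodograph_quaternionic_formg} in the normalized B-basis $\{\varphi^\omega_{i,2}\}_{i=0}^4$ of ${\cal DEP}^\omega_2$, integrate termwise by means of the antiderivative identities \eqref{integrali2}, and then read off the coefficients of the resulting curve in the normalized B-basis $\{\phi^\omega_{i,2}\}_{i=0}^5$ of ${\cal EP}^\omega_2$. That $\mathbf{r}(t)$ indeed belongs to ${\cal EP}^\omega_2$, so that this last expansion makes sense, is guaranteed by \cref{prop:As} together with \eqref{eq:As}, since $a_0,a_1,a_2,a_3\in{\cal A}^\omega_2$.

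First I would substitute \eqref{eq:At_spatial_quintic} into $\mathbf{r}'(t)=\mathbf{A}(t)\,\mathbf{i}\,\mathbf{A}^*(t)$, obtaining
\[
\mathbf{r}'(t)\;=\;\sum_{j,k=0}^{2}\mathbf{A}_j\,\mathbf{i}\,\mathbf{A}_k^*\;\psi^\omega_{j,2}(t)\,\psi^\omega_{k,2}(t),
\]
and symmetrize the $(j,k)$ and $(k,j)$ contributions into the pure vector quaternions $\frac{1}{2}(\mathbf{A}_j\mathbf{i}\mathbf{A}_k^*+\mathbf{A}_k\mathbf{i}\mathbf{A}_j^*)$. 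Next I would use the product-to-basis identities \eqref{equivalenze2} to replace each product $\psi^\omega_{j,2}\psi^\omega_{k,2}$ by the corresponding combination of the $\varphi^\omega_{i,2}$; collecting terms yields $\mathbf{r}'(t)=\sum_{i=0}^{4}\mathbf{h}_i\,\varphi^\omega_{i,2}(t)$ with $\mathbf{h}_0=\mathbf{A}_0\mathbf{i}\mathbf{A}_0^*$, $\mathbf{h}_1=\frac{1}{2}(\mathbf{A}_0\mathbf{i}\mathbf{A}_1^*+\mathbf{A}_1\mathbf{i}\mathbf{A}_0^*)$, $\mathbf{h}_3=\frac{1}{2}(\mathbf{A}_1\mathbf{i}\mathbf{A}_2^*+\mathbf{A}_2\mathbf{i}\mathbf{A}_1^*)$, $\mathbf{h}_4=\mathbf{A}_2\mathbf{i}\mathbf{A}_2^*$, and the mixed term
\[
\mathbf{h}_2\;=\;q_0(\omega)\,\mathbf{A}_1\mathbf{i}\mathbf{A}_1^*\;+\;\tfrac{1}{2}q_1(\omega)\,(\mathbf{A}_0\mathbf{i}\mathbf{A}_2^*+\mathbf{A}_2\mathbf{i}\mathbf{A}_0^*)\;=\;q_1(\omega)\left(\frac{q_0(\omega)}{q_1(\omega)}\mathbf{A}_1\mathbf{i}\mathbf{A}_1^*+\frac{1}{2}(\mathbf{A}_0\mathbf{i}\mathbf{A}_2^*+\mathbf{A}_2\mathbf{i}\mathbf{A}_0^*)\right),
\]
where the factor $q_1(\omega)$ is deliberately pulled out because it will be absorbed by the $1/q_1(\omega)$ occurring in the antiderivative of $\varphi^\omega_{2,2}$ in \eqref{integrali2}.

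Then, integrating and fixing the integration constant so that $\mathbf{r}(0)=\mathbf{r}_0$, the identities \eqref{integrali2} give
\[
\mathbf{r}(t)=\mathbf{r}_0+q_2(\omega)\mathbf{h}_0\sum_{i=1}^{5}\phi^\omega_{i,2}(t)+q_3(\omega)\mathbf{h}_1\sum_{i=2}^{5}\phi^\omega_{i,2}(t)+\frac{q_4(\omega)}{q_1(\omega)}\mathbf{h}_2\sum_{i=3}^{5}\phi^\omega_{i,2}(t)+q_3(\omega)\mathbf{h}_3\sum_{i=4}^{5}\phi^\omega_{i,2}(t)+q_2(\omega)\mathbf{h}_4\phi^\omega_{5,2}(t).
\]
Writing $\mathbf{r}_0=\mathbf{r}_0\sum_{i=0}^{5}\phi^\omega_{i,2}(t)$ by the normalization (partition of unity) of the B-basis and equating the coefficient of each $\phi^\omega_{i,2}(t)$ with $\mathbf{r}_i$ — legitimate since $\{\phi^\omega_{i,2}\}_{i=0}^5$ is a basis of ${\cal EP}^\omega_2$ — the telescoping structure of the nested partial sums yields $\mathbf{r}_1=\mathbf{r}_0+q_2(\omega)\mathbf{h}_0$, $\mathbf{r}_2=\mathbf{r}_1+q_3(\omega)\mathbf{h}_1$, $\mathbf{r}_3=\mathbf{r}_2+\frac{q_4(\omega)}{q_1(\omega)}\mathbf{h}_2$, $\mathbf{r}_4=\mathbf{r}_3+q_3(\omega)\mathbf{h}_3$, $\mathbf{r}_5=\mathbf{r}_4+q_2(\omega)\mathbf{h}_4$, and substituting the expressions for the $\mathbf{h}_i$ (in particular $\frac{q_4(\omega)}{q_1(\omega)}\mathbf{h}_2=q_4(\omega)\big(\frac{1}{2}(\mathbf{A}_0\mathbf{i}\mathbf{A}_2^*+\mathbf{A}_2\mathbf{i}\mathbf{A}_0^*)+\frac{q_0(\omega)}{q_1(\omega)}\mathbf{A}_1\mathbf{i}\mathbf{A}_1^*\big)$) reproduces exactly \eqref{eq:cps3dII}.

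There is no genuine obstacle beyond careful bookkeeping; the points where I would be most careful are (i) the coefficient $\mathbf{h}_2$ of $\varphi^\omega_{2,2}$, which collects contributions from \emph{two} distinct lines of \eqref{equivalenze2}, namely $(\psi^\omega_{1,2})^2=q_0(\omega)\varphi^\omega_{2,2}$ and $\psi^\omega_{0,2}\psi^\omega_{2,2}=\frac{1}{2}q_1(\omega)\varphi^\omega_{2,2}$, and (ii) the matching of the scalar prefactors $q_0,\dots,q_4$, so that the $1/q_1(\omega)$ coming from \eqref{integrali2} cancels the $q_1(\omega)$ factored out of $\mathbf{h}_2$ and produces the clean $q_4(\omega)$-term in \eqref{eq:cps3dII}. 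Apart from these, the argument is the verbatim analogue of the proof of \cref{prop:Bez_EP1}, with $\{\varphi^\omega_{i,1}\}$, $\{\phi^\omega_{i,1}\}$ and $c_1,c_2,c_3$ replaced by $\{\varphi^\omega_{i,2}\}$, $\{\phi^\omega_{i,2}\}$ and $q_0,\dots,q_4$.
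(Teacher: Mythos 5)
Your argument is correct and coincides with the paper's own proof: substitute \eqref{eq:At_spatial_quintic} into \eqref{eq:hodograph_quaternionic_formg}, convert the products $\psi^\omega_{j,2}\psi^\omega_{k,2}$ via \eqref{equivalenze2} to obtain exactly the expansion \eqref{eq:hodograph_quaternionII}, and integrate termwise with \eqref{integrali2}. Your coefficients $\mathbf{h}_0,\dots,\mathbf{h}_4$ and the cancellation of $q_1(\omega)$ in the $\varphi^\omega_{2,2}$ term match the paper's computation verbatim, so nothing further is needed.
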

\begin{proof}
	\Cref{SM:prop:Bez_EP2}.
\end{proof}

\begin{remark}
	Since, from \eqref{equivalenze2} and \eqref{ghj}, 
	\[
	\lim_{\omega \rightarrow 0} \frac{q_0(\omega)}{q_1(\omega)}\;=\;2,\qquad\lim_{\omega \rightarrow 0} q_2(\omega)\;=\; \lim_{\omega \rightarrow 0} q_3(\omega)  \;=\frac{1}{5}\qquad \text{and}\qquad\lim_{\omega \rightarrow 0} q_4(\omega)\;=\;\frac{1}{15},
	\] 
	\eqref{eq:cps3dII} recover the well-known results of the quintic polynomial case when $\omega \rightarrow 0$, see \cite{Fbook}.
\end{remark}


\subsection{Parametric speed and arc length in ${\cal EP}^{\omega}_2$}

\begin{proposition} \label{prop:par_speed_EP2}
	The parametric speed of $\mathbf{r}(t)$ is a function in ${\cal DEP}^{\omega}_2$ having the expression
	$
	\sigma(t)= \sum_{i=0}^4 \sigma_i  \varphi_{i,2}^{\omega}(t), 
	$
	where
	\begin{equation}\label{sigmacoeff_2II}
		\begin{array}{c}
			\sigma_0\;=\; |\mathbf{A}_0|^2,\qquad\sigma_1\;=\; \frac{1}{2}(\mathbf{A}_1 \mathbf{A}_0^* + \mathbf{A}_0 \mathbf{A}_1^*),\\ \\
			\sigma_2\;=\; q_0(\omega) \, |\mathbf{A}_1|^2 \;+\; q_1(\omega) \, \frac{1}{2} \, (\mathbf{A}_2 \mathbf{A}_0^* + \mathbf{A}_0 \mathbf{A}_2^*),\\ \\
			\sigma_3\;=\; \frac{1}{2}(\mathbf{A}_1 \mathbf{A}_2^* + \mathbf{A}_2 \mathbf{A}_1^*),\qquad\sigma_4\;=\; |\mathbf{A}_2|^2.
		\end{array}
	\end{equation}
\end{proposition}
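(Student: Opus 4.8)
The plan is to start from the quaternionic expression \eqref{eq:speed_quaternionic_formg} for the parametric speed, namely $\sigma(t)=\mathbf{A}(t)\mathbf{A}^*(t)$, to insert the B-basis expansion \eqref{eq:At_spatial_quintic}--\eqref{eq:Aj_spatial_quintic} of the preimage, and then to rewrite the result in the normalized B-basis $\{\varphi^{\omega}_{i,2}\}_{i=0}^{4}$ of ${\cal DEP}^{\omega}_{2}$ by means of the conversion identities \eqref{equivalenze2}. Since $\sigma=|\mathbf{A}|^2$ is a sum of squares of functions in ${\cal A}^{\omega}_{2}={\cal DEP}^{\omega}_{1}$, membership of $\sigma$ in ${\cal DEP}^{\omega}_{2}$ is guaranteed a priori by \cref{prop:As}; the substance of the statement is the explicit form of the coefficients.

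First I would observe that the functions $\psi^{\omega}_{j,2}(t)$ are real scalars, hence they commute with the quaternion coefficients, so that
\[
\sigma(t)\;=\;\Big(\sum_{j=0}^{2}\mathbf{A}_j\,\psi^{\omega}_{j,2}(t)\Big)\Big(\sum_{k=0}^{2}\mathbf{A}_k^*\,\psi^{\omega}_{k,2}(t)\Big)\;=\;\sum_{j,k=0}^{2}\mathbf{A}_j\mathbf{A}_k^*\,\psi^{\omega}_{j,2}(t)\,\psi^{\omega}_{k,2}(t).
\]
Grouping the symmetric pairs $(j,k)$ and $(k,j)$ and using $\mathbf{A}_j\mathbf{A}_j^*=|\mathbf{A}_j|^2\in\RR$ together with $\mathbf{A}_j\mathbf{A}_k^*+\mathbf{A}_k\mathbf{A}_j^*=\mathbf{A}_j\mathbf{A}_k^*+(\mathbf{A}_j\mathbf{A}_k^*)^*\in\RR$ (twice the scalar part of $\mathbf{A}_j\mathbf{A}_k^*$), this reduces $\sigma(t)$ to a genuine real-coefficient combination of the six scalar products $(\psi^{\omega}_{0,2})^2$, $\psi^{\omega}_{0,2}\psi^{\omega}_{1,2}$, $(\psi^{\omega}_{1,2})^2$, $\psi^{\omega}_{0,2}\psi^{\omega}_{2,2}$, $\psi^{\omega}_{1,2}\psi^{\omega}_{2,2}$, $(\psi^{\omega}_{2,2})^2$, with respective coefficients $|\mathbf{A}_0|^2$, $\mathbf{A}_0\mathbf{A}_1^*+\mathbf{A}_1\mathbf{A}_0^*$, $|\mathbf{A}_1|^2$, $\mathbf{A}_0\mathbf{A}_2^*+\mathbf{A}_2\mathbf{A}_0^*$, $\mathbf{A}_1\mathbf{A}_2^*+\mathbf{A}_2\mathbf{A}_1^*$, $|\mathbf{A}_2|^2$.

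Next I would substitute the identities \eqref{equivalenze2}. The only point requiring attention is that both $(\psi^{\omega}_{1,2})^2$ and $\psi^{\omega}_{0,2}\psi^{\omega}_{2,2}$ are proportional to $\varphi^{\omega}_{2,2}$, so the coefficient of $\varphi^{\omega}_{2,2}$ collects the two contributions $q_0(\omega)\,|\mathbf{A}_1|^2$ and $\tfrac12 q_1(\omega)(\mathbf{A}_2\mathbf{A}_0^*+\mathbf{A}_0\mathbf{A}_2^*)$, while the remaining four basis functions $\varphi^{\omega}_{0,2},\varphi^{\omega}_{1,2},\varphi^{\omega}_{3,2},\varphi^{\omega}_{4,2}$ each receive a single term (with the factors $\tfrac12$ coming from $\psi^{\omega}_{0,2}\psi^{\omega}_{1,2}=\tfrac12\varphi^{\omega}_{1,2}$ and $\psi^{\omega}_{1,2}\psi^{\omega}_{2,2}=\tfrac12\varphi^{\omega}_{3,2}$). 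Collecting the coefficients of $\varphi^{\omega}_{i,2}$, $i=0,\dots,4$, yields precisely \eqref{sigmacoeff_2II}, which completes the argument. The computation is otherwise entirely routine; the main (mild) obstacle is the double bookkeeping for the $\varphi^{\omega}_{2,2}$ coefficient and making explicit that every cross term $\mathbf{A}_j\mathbf{A}_k^*+\mathbf{A}_k\mathbf{A}_j^*$ is a real scalar, so that the final expression is a legitimate real linear combination of B-basis functions.
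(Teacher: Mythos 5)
Your proposal is correct and follows essentially the same route as the paper's proof: expand $\sigma(t)=\mathbf{A}(t)\mathbf{A}^*(t)$ via the B-basis representation of the preimage, group symmetric cross terms, and convert the products $\psi^{\omega}_{j,2}\psi^{\omega}_{k,2}$ into the $\varphi^{\omega}_{i,2}$ basis using \eqref{equivalenze2}. The extra observations you make (reality of the cross terms and the merging of the two contributions into the coefficient of $\varphi^{\omega}_{2,2}$) are exactly the details the paper leaves implicit.
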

\begin{proof}
	\Cref{SM:prop:par_speed_EP2}.
\end{proof}

\begin{proposition} \label{prop:arc_EP2}
	The arc length function of $\mathbf{r}(t)$ is a function in ${\cal EP}^{\omega}_2$ having the expression
	$
	s(t)= \sum_{i=0}^5 s_i \phi_{i,2}^{\omega}(t),
	$
	where
	$$
	{
		s_0\;=\; 0,}\qquad
	s_1\;=\; s_0 \;+\;  \sigma_0 q_2(\omega), \qquad
	s_2\;=\; s_1 \;+\; \sigma_1 q_3(\omega), 
	$$
	$$
	s_3\;=\; s_2 \;+\; \sigma_2 \frac{q_4(\omega)}{q_1(\omega)}, \qquad
	s_4\;=\; s_3 \;+\; \sigma_3 q_3(\omega), \qquad
	s_5\;=\; s_4 \;+\; \sigma_4 q_2(\omega).
	$$
\end{proposition}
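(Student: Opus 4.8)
The plan is to obtain $s(t)$ by integrating the parametric-speed expansion term by term. By Proposition~\ref{prop:par_speed_EP2} we have $\sigma(t)=\sum_{i=0}^4\sigma_i\varphi_{i,2}^{\omega}(t)$ with the $\sigma_i$ given in \eqref{sigmacoeff_2II}, and since $\sigma\in{\cal DEP}^{\omega}_2$ its antiderivative belongs to ${\cal EP}^{\omega}_2$, as observed just before \eqref{eq:U5basis}. Hence the arc length function $s(t):=\int_0^t\sigma(\tau)\,d\tau$ is an element of ${\cal EP}^{\omega}_2$ and can be written uniquely as $s(t)=\sum_{i=0}^5 s_i\phi_{i,2}^{\omega}(t)$ for suitable $s_i\in\RR$. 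The first step is simply to use linearity of the integral, $s(t)=\sum_{i=0}^4\sigma_i\int_0^t\varphi_{i,2}^{\omega}(\tau)\,d\tau$.

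Next I would substitute the five antiderivative identities \eqref{integrali2}, each of which expresses $\int_0^t\varphi_{i,2}^{\omega}$ as a tail sum $\sum_{k=j(i)}^{5}\phi_{k,2}^{\omega}(t)$ scaled by one of $q_2(\omega)$, $q_3(\omega)$, $q_4(\omega)/q_1(\omega)$. Collecting, for each fixed $k$, the coefficient of $\phi_{k,2}^{\omega}$ produces a cumulative (telescoping) pattern: $\phi_{0,2}^{\omega}$ never occurs, so $s_0=0$; the coefficient of $\phi_{1,2}^{\omega}$ receives only the $\sigma_0$ term, giving $s_1=s_0+\sigma_0 q_2(\omega)$; passing from $\phi_{k,2}^{\omega}$ to $\phi_{k+1,2}^{\omega}$ adds exactly one further contribution $\sigma_{k}$ times the relevant scalar, giving $s_2=s_1+\sigma_1 q_3(\omega)$, $s_3=s_2+\sigma_2\,q_4(\omega)/q_1(\omega)$, $s_4=s_3+\sigma_3 q_3(\omega)$, and finally $s_5=s_4+\sigma_4 q_2(\omega)$, since $\phi_{5,2}^{\omega}$ collects a contribution from all five integrals. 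This is exactly the asserted recursion.

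There is no genuine obstacle here: the argument is a routine computation once \eqref{integrali2} and \eqref{sigmacoeff_2II} are in hand, and the only point requiring care is bookkeeping the cumulative sums so that the five antiderivative formulas are combined in the correct order. As a consistency check one notes that each antiderivative in \eqref{integrali2} vanishes at $t=0$, and since the $\phi_{0,2}^{\omega}$-coefficient of $s$ is zero while $\phi_{k,2}^{\omega}(0)=0$ for $k\geq 1$ by the normalized B-basis property, we recover $s(0)=0$ in agreement with $s_0=0$. In the same style as the preceding propositions, the detailed verification can be placed in the supplementary material.
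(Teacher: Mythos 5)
Your proposal is correct and follows essentially the same route as the paper's own proof: expand $\sigma(t)$ via \cref{prop:par_speed_EP2}, integrate term by term using the antiderivative identities \eqref{integrali2}, and collect the coefficients of each $\phi_{k,2}^{\omega}$ to obtain the telescoping recursion for the $s_i$. The consistency check at $t=0$ is a nice touch but adds nothing beyond what the paper does.
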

\begin{proof}
	\Cref{SM:prop:arc_EP2}.
\end{proof}

\begin{corollary}
	The total arc length of $\mathbf{r}(t)$ is
	\begin{equation}\label{totarclengthII}
		\begin{array}{l}
			{s(1)}\;=\;{s_5} \;=\;(\sigma_0+\sigma_4) q_2(\omega) \;+\; (\sigma_1+\sigma_3) q_3(\omega) \;+\; \sigma_2 \frac{q_4(\omega)}{q_1(\omega)}\\ \\
			\qquad\;\;= q_2(\omega) \big( |\mathbf{A}_0|^2 + |\mathbf{A}_2|^2 \big) + q_4(\omega) \frac{q_0(\omega)}{q_1(\omega)}   |\mathbf{A}_1|^2 +q_3(\omega) \frac{1}{2}(\mathbf{A}_1 \mathbf{A}_0^* + \mathbf{A}_0 \mathbf{A}_1^*) \\ \\
			\qquad\qquad\qquad+\; q_3(\omega) \frac{1}{2}(\mathbf{A}_1 \mathbf{A}_2^* + \mathbf{A}_2 \mathbf{A}_1^*)  +
			q_4(\omega)  \, \frac{1}{2} \, (\mathbf{A}_2 \mathbf{A}_0^* + \mathbf{A}_0 \mathbf{A}_2^*).
		\end{array}
	\end{equation}
\end{corollary}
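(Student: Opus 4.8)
The plan is to obtain this corollary as a direct bookkeeping consequence of Propositions~\ref{prop:arc_EP2} and \ref{prop:par_speed_EP2}. First I would evaluate the arc length function $s(t)=\sum_{i=0}^5 s_i\,\phi_{i,2}^{\omega}(t)$ of Proposition~\ref{prop:arc_EP2} at $t=1$. Because $\{\phi_{i,2}^{\omega}\}_{i=0}^5$ is the normalized B-basis of ${\cal EP}^{\omega}_2$ on $[0,1]$, its endpoint behaviour---already recorded in Proposition~\ref{prop:EP6}(d), and also plainly visible in the explicit formulas \eqref{eq:U5basis}, since $\phi_{5,2}^{\omega}(t)=g_0(\omega t)/g_0(\omega)$ is the only basis function that does not vanish at $t=1$---gives $\phi_{i,2}^{\omega}(1)=0$ for $i=0,\dots,4$ and $\phi_{5,2}^{\omega}(1)=1$. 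Hence $s(1)=s_5$.

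Next I would telescope the recursion for the $s_i$ given in Proposition~\ref{prop:arc_EP2}. Starting from $s_0=0$ and summing the five increments yields $s_5=\sigma_0\,q_2(\omega)+\sigma_1\,q_3(\omega)+\sigma_2\,\frac{q_4(\omega)}{q_1(\omega)}+\sigma_3\,q_3(\omega)+\sigma_4\,q_2(\omega)$, and grouping the terms that share a common coefficient reproduces exactly the first line of \eqref{totarclengthII}.

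Finally I would substitute the parametric speed coefficients \eqref{sigmacoeff_2II}. The pair $(\sigma_0+\sigma_4)\,q_2(\omega)$ gives $q_2(\omega)\big(|\mathbf{A}_0|^2+|\mathbf{A}_2|^2\big)$; the pair $(\sigma_1+\sigma_3)\,q_3(\omega)$ gives $q_3(\omega)\,\frac{1}{2}(\mathbf{A}_1\mathbf{A}_0^*+\mathbf{A}_0\mathbf{A}_1^*)+q_3(\omega)\,\frac{1}{2}(\mathbf{A}_1\mathbf{A}_2^*+\mathbf{A}_2\mathbf{A}_1^*)$; and $\sigma_2\,\frac{q_4(\omega)}{q_1(\omega)}$ splits into $\frac{q_0(\omega)q_4(\omega)}{q_1(\omega)}\,|\mathbf{A}_1|^2+q_4(\omega)\,\frac{1}{2}(\mathbf{A}_2\mathbf{A}_0^*+\mathbf{A}_0\mathbf{A}_2^*)$, the factor $q_1(\omega)$ cancelling in the second summand. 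Adding these contributions produces the second displayed expression of \eqref{totarclengthII}. There is no real obstacle here: every step is elementary once the two preceding propositions are available, the only point requiring a word of justification being the endpoint values $\phi_{5,2}^{\omega}(1)=1$ and $\phi_{i,2}^{\omega}(1)=0$ ($i<5$) used to pass from $s(t)$ to $s_5$.
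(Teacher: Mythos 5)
Your proof is correct and follows exactly the route the paper intends (the corollary is left unproved there precisely because it is this immediate bookkeeping consequence of Propositions~\ref{prop:arc_EP2} and \ref{prop:par_speed_EP2}): evaluate at $t=1$ using $\phi_{i,2}^{\omega}(1)=\delta_{i,5}$, telescope the recursion for the $s_i$, and substitute \eqref{sigmacoeff_2II}. Nothing to add.
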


\begin{remark}
	When $\omega \rightarrow 0$, the total arc length formula in \eqref{totarclengthII} yields
	\begin{multline*}
		s(1)\;=\;
		\frac{1}{5} \big( |\mathbf{A}_0|^2 + |\mathbf{A}_2|^2 \big) \;+\; \frac{2}{15} |\mathbf{A}_1|^2 \;+\; \frac{1}{10}(\mathbf{A}_1 \mathbf{A}_0^* + \mathbf{A}_0 \mathbf{A}_1^*) \\
		+\;
		\frac{1}{10} (\mathbf{A}_1 \mathbf{A}_2^* + \mathbf{A}_2 \mathbf{A}_1^*)  \;+\;
		\frac{1}{30} (\mathbf{A}_2 \mathbf{A}_0^* + \mathbf{A}_0 \mathbf{A}_2^*),
	\end{multline*}
	thus recovering the well-known result of the quintic polynomial case, see \cite{Fbook}.
\end{remark}


\section{First-order Hermite interpolation by EPH curves}\label{sec:Hermite}

{As in the polynomial case (see, e.g., \cite[Chapter 28.1]{Fbook}), PH curves in ${\cal EP}^{\omega}_1$ could offer the possibility to interpolate $G^1$ Hermite data (i.e., end points and associated unit tangent vectors) at most.
	PH curves in ${\cal EP}^{\omega}_2$ are thus the simplest EPH curves that one could use to match 
	$C^1$ Hermite data.}
The problem of interpolating $C^1$ Hermite data consists in constructing EPH curves that interpolate
prescribed end points $\mathbf{r}_0$, $\mathbf{r}_5$ and first derivatives at these end points, hereinafter denoted by $\mathbf{d}_i$, $\mathbf{d}_f$, respectively. For the sake of conciseness, we also introduce the following abbreviations that do not specify the dependence on $\omega$:
$$
I_0\;:=\;q_2(\omega), \qquad
I_1\;:=\;\frac{1}{2} q_3(\omega), \qquad
I_2\;:=\;\frac{1}{2} q_4(\omega), \qquad
I_3\;:=\; \frac{q_0(\omega)}{q_1(\omega)} \, q_4(\omega).
$$

\begin{proposition}\label{proposizione Pb Hermite EPH 3D}
	The PH curves ${\bf r}(t)$ in $\mathcal{EP}^\omega_2$ solving the first-order Hermite interpolation problem
	$
	\mathbf{r}(0) = \mathbf{r}_0, \; \mathbf{r'}(0) = \mathbf{d}_i, \;
	\mathbf{r}(1) = \mathbf{r}_5, \; \mathbf{r'}(1) = \mathbf{d}_f,
	$
	have control points given by \eqref{eq:cps3dII}
	with
	\begin{equation}\label{eq:A0A1A2_sol}
		\begin{array}{c}
			\mathbf{A}_0\;=\;\sqrt{|\mathbf{d}_i|} \, \frac{{\mathbf{i}}+{\bf w}_i}{|{\mathbf{i}}+{\bf w}_i|} \exp(\eta_0 {\mathbf{i}}), \qquad
			\mathbf{A}_2\;=\;\sqrt{|\mathbf{d}_f|} \, \frac{{\mathbf{i}}+{\bf w}_f}{|{\mathbf{i}}+{\bf w}_f|} \exp(\eta_2 {\mathbf{i}}), \\ \\
			\mathbf{A}_1\;=\;-\frac{I_1}{I_3}(\mathbf{A}_0+\mathbf{A}_2)\;+\;\frac{\sqrt{|{\bf c}|}}{I_3} \, \frac{{\mathbf{i}}+{\bf w}_c}{|{\mathbf{i}}+{\bf w}_c|} \exp(\eta_1 {\mathbf{i}}),
		\end{array}
	\end{equation}
	where
	\begin{equation}\label{eq:c}
		{\bf c}\;:=\;I_3(\mathbf{r}_5-\mathbf{r}_0) \;+\; (I_1^2-I_0 I_3) \, (\mathbf{d}_i+\mathbf{d}_f)\;+\; (I_1^2-I_2 I_3) \, (\mathbf{A}_0\mathbf{i}\mathbf{A}_2^*+\mathbf{A}_2\mathbf{i}\mathbf{A}_0^*),
	\end{equation}
	and
	\begin{itemize}
		\item[$\bullet$] $(\lambda_i,\mu_i,\nu_i),\;(\lambda_f,\mu_f,\nu_f),\;(\lambda_c,\mu_c,\nu_c)$ are the direction cosines of $\mathbf{d}_i$, $\mathbf{d}_f$ and $\mathbf{c}$, respectively;
		\item[$\bullet$] ${\bf w}_i=\lambda_i {\mathbf{i}} + \mu_i {\mathbf{j}} + \nu_i {\mathbf{k}},\;{\bf w}_f=\lambda_f {\mathbf{i}} + \mu_f {\mathbf{j}} + \nu_f {\mathbf{k}},\;
		{\bf w}_c\;=\;\lambda_c {\mathbf{i}} + \mu_c {\mathbf{j}} + \nu_c {\mathbf{k}}$ are unit vectors in the directions of $\mathbf{d}_i$, $\mathbf{d}_f$ and $\mathbf{c}$, respectively;
		\item[$\bullet$] $\eta_0,\;\eta_1,\;\eta_2$ are free angular variables in $\left[-\pi/2,\pi/2\right]$.
	\end{itemize}
\end{proposition}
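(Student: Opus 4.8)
The plan is to translate the $C^1$ Hermite interpolation conditions directly into equations on the quaternionic coefficients $\mathbf{A}_0,\mathbf{A}_1,\mathbf{A}_2$, using the B\'ezier-like control points formula \eqref{eq:cps3dII} together with the endpoint conditions of \cref{prop:EP6}(d), and then to solve that system. First I would use \cref{prop:EP6}(d): since $\mathbf{r}'(0)=\frac{1}{q_2(\omega)}(\mathbf{r}_1-\mathbf{r}_0)$ and, by \eqref{eq:cps3dII}, $\mathbf{r}_1-\mathbf{r}_0=q_2(\omega)\,\mathbf{A}_0\mathbf{i}\mathbf{A}_0^*$, the condition $\mathbf{r}'(0)=\mathbf{d}_i$ becomes simply $\mathbf{A}_0\mathbf{i}\mathbf{A}_0^*=\mathbf{d}_i$; likewise $\mathbf{A}_2\mathbf{i}\mathbf{A}_2^*=\mathbf{d}_f$. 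These are the classical ``quaternion square root of a vector'' equations, whose general solution is $\mathbf{A}_0=\sqrt{|\mathbf{d}_i|}\,\frac{\mathbf{i}+\mathbf{w}_i}{|\mathbf{i}+\mathbf{w}_i|}\exp(\eta_0\mathbf{i})$ with $\eta_0$ free, and similarly for $\mathbf{A}_2$ with $\eta_2$; this reproduces the first line of \eqref{eq:A0A1A2_sol}. (The form $\frac{\mathbf{i}+\mathbf{w}_i}{|\mathbf{i}+\mathbf{w}_i|}$ is the standard unit quaternion rotating $\mathbf{i}$ onto $\mathbf{w}_i$, and right-multiplication by $\exp(\eta_0\mathbf{i})$ exhausts the one-parameter stabilizer of $\mathbf{i}$ under the conjugation map.)

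Next I would impose the positional condition $\mathbf{r}_5=\mathbf{r}_0+\int_0^1\mathbf{r}'(t)\,dt$. Summing the telescoping increments in \eqref{eq:cps3dII} gives
\[
\mathbf{r}_5-\mathbf{r}_0 \;=\; I_0(\mathbf{A}_0\mathbf{i}\mathbf{A}_0^*+\mathbf{A}_2\mathbf{i}\mathbf{A}_2^*)\;+\;2I_1\cdot\tfrac12(\mathbf{A}_0\mathbf{i}\mathbf{A}_1^*+\mathbf{A}_1\mathbf{i}\mathbf{A}_0^*+\mathbf{A}_1\mathbf{i}\mathbf{A}_2^*+\mathbf{A}_2\mathbf{i}\mathbf{A}_1^*)\;+\;I_3\,\mathbf{A}_1\mathbf{i}\mathbf{A}_1^*\;+\;2I_2\cdot\tfrac12(\mathbf{A}_0\mathbf{i}\mathbf{A}_2^*+\mathbf{A}_2\mathbf{i}\mathbf{A}_0^*),
\]
where I have used $I_0=q_2$, $I_1=\tfrac12 q_3$, $I_2=\tfrac12 q_4$, $I_3=\tfrac{q_0}{q_1}q_4$ and the identity (from \eqref{eq:cps3dII}) that the $\mathbf{r}_3$-increment contributes $q_4(\omega)\big(\tfrac12(\mathbf{A}_0\mathbf{i}\mathbf{A}_2^*+\mathbf{A}_2\mathbf{i}\mathbf{A}_0^*)+\tfrac{q_0}{q_1}\mathbf{A}_1\mathbf{i}\mathbf{A}_1^*\big)$. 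Substituting $\mathbf{A}_0\mathbf{i}\mathbf{A}_0^*=\mathbf{d}_i$, $\mathbf{A}_2\mathbf{i}\mathbf{A}_2^*=\mathbf{d}_f$ and moving known terms to the left, the equation takes the shape $I_3\,\mathbf{A}_1\mathbf{i}\mathbf{A}_1^* + 2I_1\,\mathbf{A}_1\mathbf{i}(\mathbf{A}_0+\mathbf{A}_2)^*\,\big|_{\text{symmetrized}} = (\text{known vector})$. The key algebraic manoeuvre is to complete the square: one checks that
\[
I_3^2\,\mathbf{A}_1\mathbf{i}\mathbf{A}_1^* + I_1 I_3\big(\mathbf{A}_1\mathbf{i}(\mathbf{A}_0+\mathbf{A}_2)^* + (\mathbf{A}_0+\mathbf{A}_2)\mathbf{i}\mathbf{A}_1^*\big) + I_1^2(\mathbf{A}_0+\mathbf{A}_2)\mathbf{i}(\mathbf{A}_0+\mathbf{A}_2)^* \;=\; \big(I_3\mathbf{A}_1 + I_1(\mathbf{A}_0+\mathbf{A}_2)\big)\,\mathbf{i}\,\big(I_3\mathbf{A}_1 + I_1(\mathbf{A}_0+\mathbf{A}_2)\big)^*,
\]
so that defining $\mathbf{B}:=I_3\mathbf{A}_1+I_1(\mathbf{A}_0+\mathbf{A}_2)$ reduces the positional condition to $\mathbf{B}\,\mathbf{i}\,\mathbf{B}^*=\mathbf{c}$, where $\mathbf{c}$ collects all the remaining known terms. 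Expanding $(\mathbf{A}_0+\mathbf{A}_2)\mathbf{i}(\mathbf{A}_0+\mathbf{A}_2)^* = \mathbf{d}_i+\mathbf{d}_f+(\mathbf{A}_0\mathbf{i}\mathbf{A}_2^*+\mathbf{A}_2\mathbf{i}\mathbf{A}_0^*)$ and carefully tracking the coefficients $I_0,I_1,I_2,I_3$, one arrives precisely at the expression \eqref{eq:c} for $\mathbf{c}$. Note $\mathbf{c}$ depends on $\mathbf{A}_0,\mathbf{A}_2$ only through the combination $\mathbf{A}_0\mathbf{i}\mathbf{A}_2^*+\mathbf{A}_2\mathbf{i}\mathbf{A}_0^*$, hence only on the fixed data $\mathbf{d}_i,\mathbf{d}_f$ and the chosen angles $\eta_0,\eta_2$ — so $\mathbf{c}$ is known once those are fixed.

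Finally, the equation $\mathbf{B}\mathbf{i}\mathbf{B}^*=\mathbf{c}$ is again of quaternion-square-root type, with general solution $\mathbf{B}=\sqrt{|\mathbf{c}|}\,\frac{\mathbf{i}+\mathbf{w}_c}{|\mathbf{i}+\mathbf{w}_c|}\exp(\eta_1\mathbf{i})$, $\eta_1$ free; then $\mathbf{A}_1=\frac{1}{I_3}\big(\mathbf{B}-I_1(\mathbf{A}_0+\mathbf{A}_2)\big)=-\frac{I_1}{I_3}(\mathbf{A}_0+\mathbf{A}_2)+\frac{\sqrt{|\mathbf{c}|}}{I_3}\frac{\mathbf{i}+\mathbf{w}_c}{|\mathbf{i}+\mathbf{w}_c|}\exp(\eta_1\mathbf{i})$, which is the last line of \eqref{eq:A0A1A2_sol}. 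Division by $I_3$ is legitimate because $I_3=\frac{q_0(\omega)}{q_1(\omega)}q_4(\omega)\neq0$ for all $\omega\in\mathbb{R}^+$ by the remark following \eqref{ghj}. I expect the main obstacle to be purely bookkeeping: verifying that the completion-of-the-square constants and the symmetrized cross-terms combine to give exactly the stated coefficients $I_1^2-I_0I_3$ and $I_1^2-I_2I_3$ in \eqref{eq:c}, which requires substituting the definitions of $I_0,\ldots,I_3$ in terms of $q_0,\ldots,q_4$ and simplifying; there is no conceptual difficulty, only the risk of sign or index slips in the telescoping sum from \eqref{eq:cps3dII}. One should also remark that restricting $\eta_0,\eta_1,\eta_2$ to $[-\pi/2,\pi/2]$ rather than $[-\pi,\pi]$ loses no solutions, since $\mathbf{q}$ and $-\mathbf{q}$ yield the same conjugation $\mathbf{q}\mathbf{i}\mathbf{q}^*$.
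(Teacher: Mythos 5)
Your proposal is correct and follows essentially the same route as the paper: derive $\mathbf{A}_0\mathbf{i}\mathbf{A}_0^*=\mathbf{d}_i$, $\mathbf{A}_2\mathbf{i}\mathbf{A}_2^*=\mathbf{d}_f$ from the end-derivative conditions, solve them in the standard quaternion square-root form, and reduce the positional condition by completing the square with $\widehat{\mathbf{A}}=I_1\mathbf{A}_0+I_3\mathbf{A}_1+I_1\mathbf{A}_2$ (your $\mathbf{B}$) to a second equation of the same type with right-hand side \eqref{eq:c}. Your coefficient bookkeeping for $(I_1^2-I_0I_3)$ and $(I_1^2-I_2I_3)$ checks out against \eqref{eq:cps3dII}, so no gap remains.
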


\begin{proof}
	In view of \eqref{eq:hodograph_quaternionic_formg} and \eqref{eq:At_spatial_quintic},
	interpolation of the end-derivatives yields the equations
	\begin{equation}\label{equaz 2 dim prob Hermite ATPH 3D}
		\mathbf{A}_0\mathbf{i}\mathbf{A}_0^*\;=\;\mathbf{d}_i, \qquad \mathbf{A}_2\mathbf{i}\mathbf{A}_2^*\;=\;\mathbf{d}_f,
	\end{equation}
	for $\mathbf{A}_0$ and $\mathbf{A}_2$, where $\mathbf{d}_i$ and $\mathbf{d}_f$ are known pure vector quaternions.
	Moreover, interpolation of the end points $\mathbf{r}_0$ and $\mathbf{r}_5$ gives the condition
	\begin{equation}\label{equaz 3 dim prob Hermite ATPH 3D}
		\begin{array}{l}
			\int_0^{1}{\mathbf{A}(t)\mathbf{i}\mathbf{A^*}(t) \, dt}\;=\;\mathbf{r}_5-\mathbf{r}_0 \\ \\
			\qquad\qquad\qquad\quad\;\;=\; I_0 \mathbf{A}_0\mathbf{i}\mathbf{A}_0^* + I_1 (\mathbf{A}_0\mathbf{i}\mathbf{A}_1^*+\mathbf{A}_1\mathbf{i}\mathbf{A}_0^*) + I_2 (\mathbf{A}_0\mathbf{i}\mathbf{A}_2^* +\mathbf{A}_2\mathbf{i}\mathbf{A}_0^*)   \\ \\
			\qquad\qquad\qquad\qquad\qquad+\; I_3 \mathbf{A}_1\mathbf{i}\mathbf{A}_1^*+ I_1 (\mathbf{A}_1\mathbf{i}\mathbf{A}_2^*+\mathbf{A}_2\mathbf{i}\mathbf{A}_1^*) + I_0 \mathbf{A}_2\mathbf{i}\mathbf{A}_2^*.
		\end{array}
	\end{equation}
	Recalling the result in \cite[Chapter 28]{Fbook} and \cite[Section 3.2]{Faroukietal}, the quaternion equations \eqref{equaz 2 dim prob Hermite ATPH 3D}
	can be solved directly obtaining
	\begin{equation}\label{equaz 4 dim prob Hermite ATPH 3D}
		\begin{array}{c}
			\mathbf{A}_0 
			\;=\; \sqrt{|\mathbf{d}_i|} \, \frac{{\mathbf{i}}+{\bf w}_i}{|{\mathbf{i}}+{\bf w}_i|} \exp(\eta_0 {\mathbf{i}}) 
			\qquad\text{and}\qquad
			\mathbf{A}_2 
			\;=\; \sqrt{|\mathbf{d}_f|} \, \frac{{\mathbf{i}}+{\bf w}_f}{|{\mathbf{i}}+{\bf w}_f|} \exp(\eta_2 {\mathbf{i}}). 
		\end{array}
	\end{equation} 
	Knowing $\mathbf{A}_0$ and $\mathbf{A}_2$, the solution of \eqref{equaz 3 dim prob Hermite ATPH 3D} for $\mathbf{A}_1$
	may appear more difficult. However, by using \eqref{equaz 2 dim prob Hermite ATPH 3D} and making appropriate rearrangements, \eqref{equaz 3 dim prob Hermite ATPH 3D} can be rewritten as
	\begin{multline}\label{equaz 6 dim prob Hermite ATPH 3D}
		(I_1  \mathbf{A}_0 + I_3 \mathbf{A}_1 + I_1 \mathbf{A}_2)\mathbf{i}(I_1 \mathbf{A}_0 + I_3 \mathbf{A}_1 + I_1 \mathbf{A}_2)^*\;=\\
		\quad=\; I_3(\mathbf{r}_5-\mathbf{r}_0) \;+\; (I_1^2-I_0 I_3) \, (\mathbf{d}_i+\mathbf{d}_f)\;+\; (I_1^2-I_2 I_3) \, (\mathbf{A}_0\mathbf{i}\mathbf{A}_2^*+\mathbf{A}_2\mathbf{i}\mathbf{A}_0^*).
	\end{multline}
	Equation \eqref{equaz 6 dim prob Hermite ATPH 3D} is of the form $\widehat{\mathbf{A}} \mathbf{i} \widehat{\mathbf{A}}^*={\bf c}$
	(exactly as \eqref{equaz 2 dim prob Hermite ATPH 3D})
	where
	\begin{equation}\label{eq:Ahat}
		\widehat{\mathbf{A}}\;:=\;I_1 \mathbf{A}_0 \;+\; I_3 \mathbf{A}_1 \;+\; I_1 \mathbf{A}_2.
	\end{equation} 
	Note that ${\bf c}$ is a known pure vector quaternion.
	Exploiting \eqref{equaz 4 dim prob Hermite ATPH 3D} we can write
	$$
	\mathbf{A}_0\mathbf{i}\mathbf{A}_2^*\;+\;\mathbf{A}_2\mathbf{i}\mathbf{A}_0^*\;=\;
	\sqrt{(1+\lambda_i)|\mathbf{d}_i|(1+\lambda_f)|\mathbf{d}_f|}(a_x\mathbf{i}+a_y\mathbf{j}+a_z\mathbf{k}),
	$$
	where
	$$
	\begin{array}{rcl}
		a_x &=& \cos (\Delta \eta) - \frac{(\mu_i\mu_f+\nu_i\nu_f) \cos (\Delta \eta) +
			(\mu_i\nu_f-\mu_f\nu_i) \sin (\Delta \eta)}{(1+\lambda_i)(1+\lambda_f)},\\ 
		a_y &=& \frac{\mu_i \cos (\Delta \eta) -\nu_i \sin (\Delta \eta)}{1+\lambda_i} + \frac{\mu_f \cos (\Delta \eta) + \nu_f \sin (\Delta \eta)}{1+\lambda_f} ,\\ 
		a_z &=& \frac{\nu_i \cos (\Delta \eta) + \mu_i \sin (\Delta \eta)}{1+\lambda_i} + \frac{\nu_f \cos (\Delta \eta) - \mu_f \sin (\Delta \eta)}{1+\lambda_f},
	\end{array}
	$$
	with $\Delta \eta:=\eta_2-\eta_0$.
	Finally, writing $\mathbf{c}=c_x\mathbf{i}+c_y\mathbf{j}+c_z\mathbf{k}$, the solution of \eqref{equaz 6 dim prob Hermite ATPH 3D}  for $\mathbf{A}_1$ is
	$$
	\begin{array}{rcl}
		\mathbf{A}_1 
		&=& -\frac{I_1}{I_3}(\mathbf{A}_0+\mathbf{A}_2) \;+\; \frac{\sqrt{|{\bf c}|}}{I_3} \, \frac{{\mathbf{i}}+{\bf w}_c}{|{\mathbf{i}}+{\bf w}_c|} \exp(\eta_1 {\mathbf{i}}), 
	\end{array}
	$$
	which concludes the proof.  
\end{proof}

\begin{remark}
	When $\omega\rightarrow0$ the result of \cref{proposizione Pb Hermite EPH 3D} gets back the well-known result of the quintic polynomial case treated in \cite{Faroukietal}.
\end{remark}

\begin{remark} \label{rem:etas}
	{ 
		The three angular variables $\eta_0$, $\eta_1$, $\eta_2$, associated with the quaternions $\mathbf{A}_0$, $\mathbf{A}_1$,
		$\mathbf{A}_2$ respectively, do not identify independent degrees of freedom. Indeed, the control points of spatial EPH Hermite interpolants depend only on $\omega$ and the difference of the angles $\eta_0$, $\eta_1$, $\eta_2$. Thus, without loss of generality, we can assume $\eta_1$ to be fixed and, by introducing the notation $\Delta \eta=\eta_2-\eta_0$ and $\eta_m=(\eta_0+\eta_2)/2$, write $\eta_0=\eta_m- \Delta \eta/2$, $\eta_2=\eta_m+ \Delta \eta/2$. Moreover, while the choice $\eta_k\in\left[-\pi/2,\pi/2\right]$, $k\in\{0,1,2\}$, covers all possible different solutions of the Hermite interpolation problem, these are also recovered by the choice $\eta_k\in[0,\pi]$, $k\in\{0,1,2\}$. Indeed, if we substitute $\mathbf{A}_k$ with $\mathbf{A}_k\mathbf{i}$, $k\in\{0,1,2\}$, in \eqref{eq:cps3dII} we obtain exactly the same control points. However, doing so, \eqref{eq:A0A1A2_sol} has to be multiplied from the right by $\mathbf{i}$, which leads to $\exp((\eta_k+\pi/2)\mathbf{i})$ instead of $\exp(\eta_k\mathbf{i})$, $k\in\{0,1,2\}$.
	}
\end{remark}

{
	\begin{remark}
		As already observed, to recover the result for the planar case with $a_0(t)=a_3(t)=0$, we need to have $\mathbf{A}_0,\mathbf{A}_1,\mathbf{A}_2\;\in\;\text{span}\{\mathbf{i},\mathbf{j}\}$. Therefore we must have $\eta_0,\eta_1,\eta_2\in\{0,\pi\}$ (see Remark \ref{rem:etas}).
		Even if the possible combinations of $\eta_k$, $k \in \{0,1,2\}$ are eight, due to \eqref{eq:cps3dII} we obtain only four different curves. Indeed, if we reason on the signs of $\mathbf{A}_k$, $k \in \{0,1,2\}$, we get the results collected in Table \ref{tab:signs}. Thus, as stated in Remark \ref{rem:etas}, one can obtain the four different planar Hermite  interpolants by fixing $\eta_1=0$ and then choosing $\eta_0,\eta_2\in\{0,\pi\}$. Since fixing $\eta_1=0$ means taking $\mathbf{A}_1$ with the sign $+$, we refer to the four possible planar solutions with the notation $++$, $+-$, $-+$, $--$ to specify the four possible combinations of the signs of $\mathbf{A}_0$ and $\mathbf{A}_2$ that one could consider.
	\end{remark}
	\smallskip
	\begin{table}[h!]
		\caption{All possible sign combinations of $\mathbf{A}_0$, $\mathbf{A}_1$, $\mathbf{A}_2$ and their effects on the associated expressions.}
		\label{tab:signs}
		\begin{center}
			\begin{tabular}{c|c|c||c|c|c}
				$\mathbf{A}_0$ & $\mathbf{A}_1$ & $\mathbf{A}_2$ & $\mathbf{A}_0\mathbf{i}\mathbf{A}_1^*+\mathbf{A}_1\mathbf{i}\mathbf{A}_0^*$ &
				$\mathbf{A}_0\mathbf{i}\mathbf{A}_2^*+\mathbf{A}_2\mathbf{i}\mathbf{A}_0^*$ &
				$\mathbf{A}_1\mathbf{i}\mathbf{A}_2^*+\mathbf{A}_2\mathbf{i}\mathbf{A}_1^*$ \\
				\hline
				\hline
				+& +& +& +& +& +\\
				\hline
				+& +& -& +& -& -\\
				\hline
				+& -& +& -& +& -\\
				\hline
				+& -& -& -& -& +\\
				\hline
				-& +& +& -& -& +\\
				\hline
				-& +& -& -& +& -\\
				\hline
				-& -& +& +& -& -\\
				\hline
				-& -& -& +& +& +\\
			\end{tabular}
		\end{center}
	\end{table}
	\smallskip		
	\cref{fig:ATPH_EPH} shows, in the second row, an application of \cref{proposizione Pb Hermite EPH 3D} for planar Hermite data, while an application for spatial Hermite data is illustrated in \cref{fig:Hermite3d_EPH}.}

\begin{figure}[h!]
	\begin{center} 
		\hspace{-0.75cm}\includegraphics[width=0.38\textwidth]{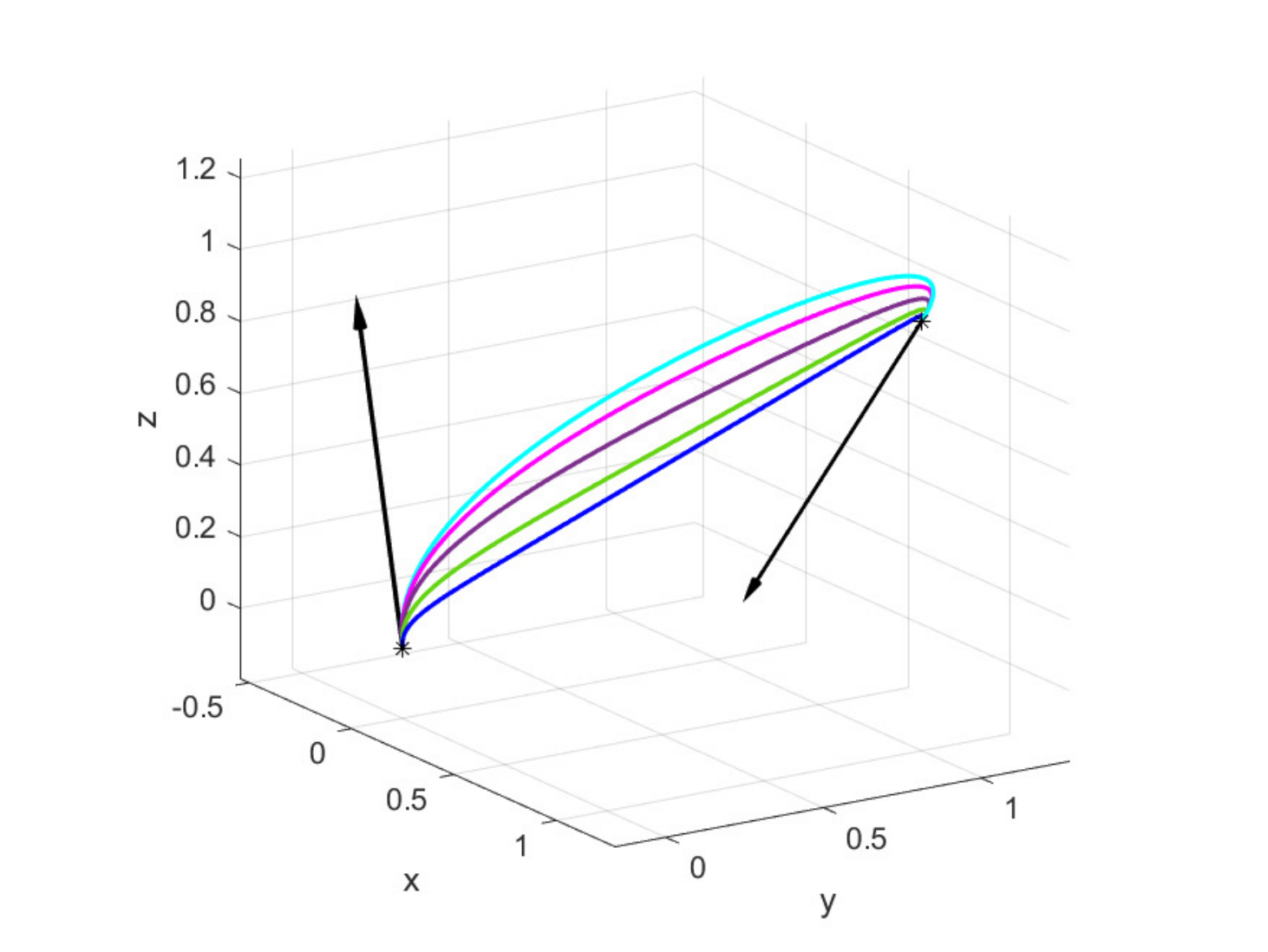}\hspace{-0.9cm}
		\includegraphics[width=0.38\textwidth]{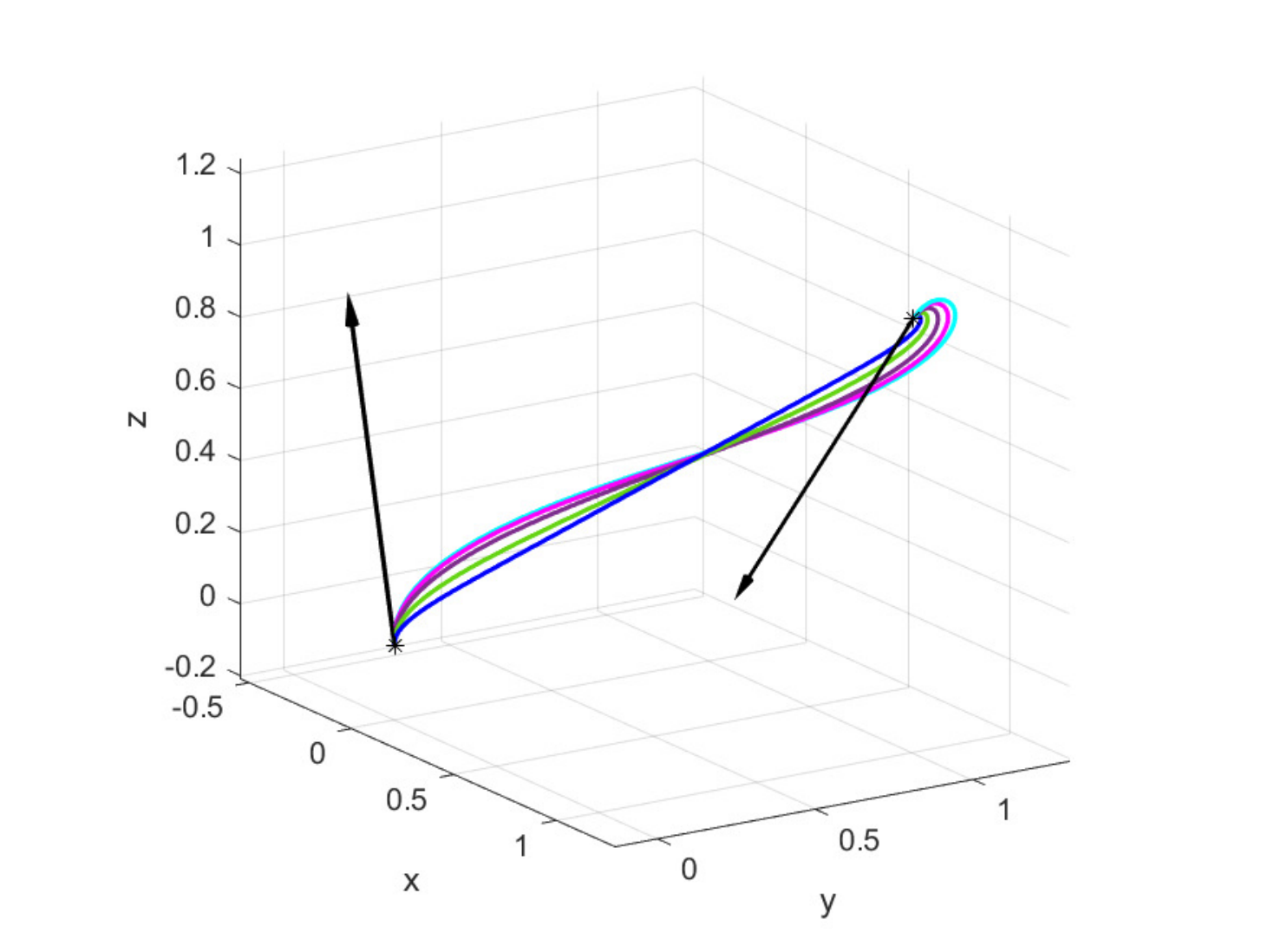}	\hspace{-0.9cm}
		\includegraphics[width=0.38\textwidth]{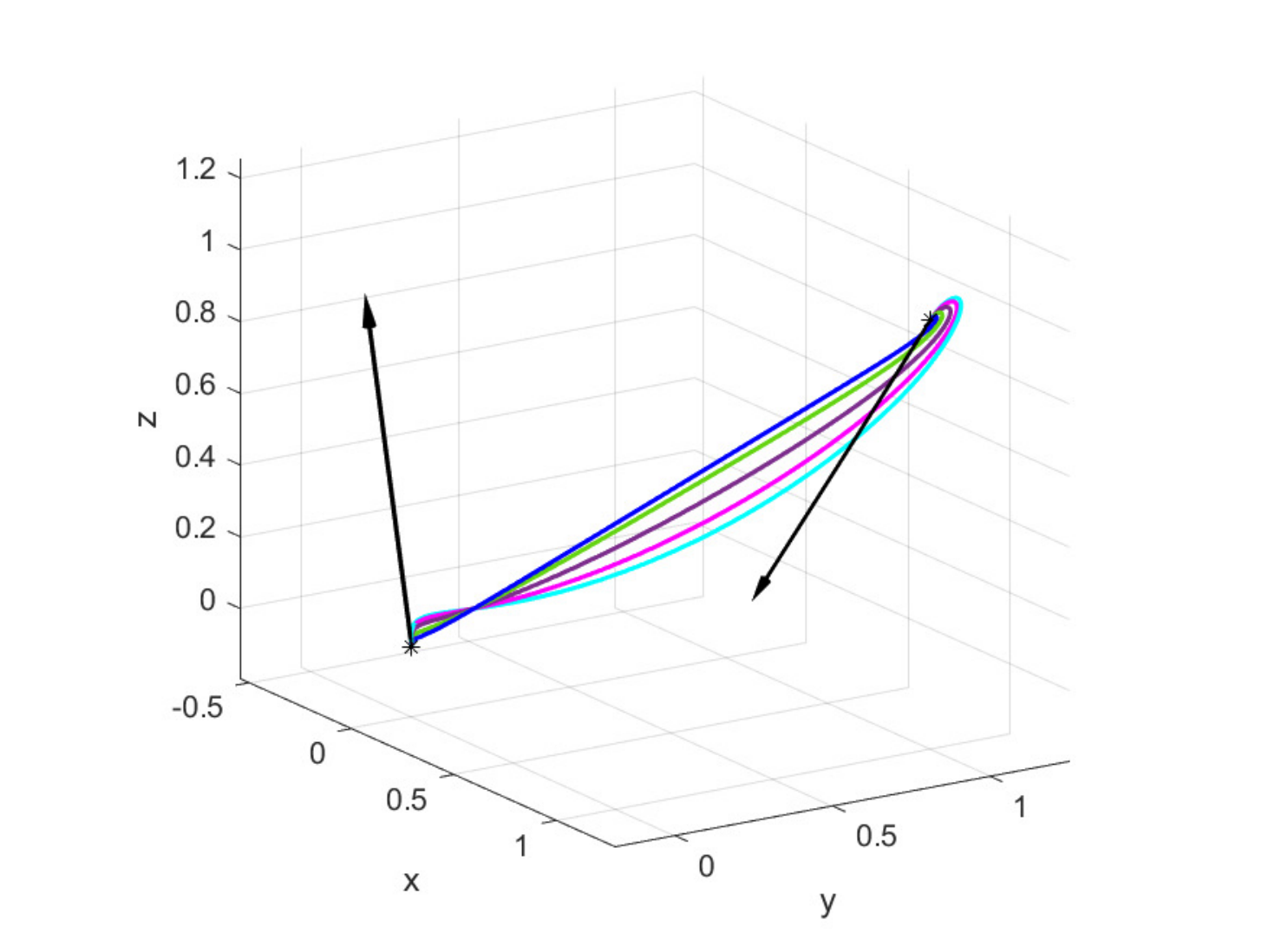}	
	\end{center}
	\caption{
		One-parameter families of spatial EPH Hermite interpolants to the data $\mathbf{r}_0 = (0, 0, 0)$, $\mathbf{r}_5 =
		(1, 1, 1)$, $\mathbf{d}_i = (-0.8, 0.3, 1.2)$, $\mathbf{d}_f = (0.5,-1.3,-1)$, defined by fixing $\eta_1=-\pi/2$, $\Delta \eta=\pi/3$ and $\eta_m=-\pi/2$ (left), $\eta_m=-\pi/10$ (center), $\eta_m=3\pi/10$ (right). The family members of each subfigure are obtained with $\omega \in  \{0.1, 3, 6, 12, 24\}$, where a  bigger value of $\omega$ results in a straighter curve connecting the two endpoints.
	}
	\label{fig:Hermite3d_EPH}
\end{figure}


\section{Evaluation of EPH curves}\label{sec:eval}

In order to evaluate EPH curves two considerations have to be done. On the one hand, looking at the expressions of the normalized B-basis \eqref{eq:U3basis} and \eqref{eq:U5basis}, it is clear that they are not suited for computations when $\omega$ is large. 
The strategy to avoid this problem is to express all the functions involved as a ratio of exponential polynomials, simplifying the dominant growth term. Unfortunately, the resulting expressions are very long. For this reason they are not presented here, but they can be found in \cref{SM:stable_phi}.

On the other hand, computational problems also arise for small values of $\omega$.
Unfortunately, this issue cannot be solved like the previous one with an analytic trick. A way to proceed in this case is to consider for each basis function $\phi^{\omega}_{i,m}(t)$ its corresponding Taylor expansion $T^\omega_{i,m}(t)$ at $\omega=0$ up to a certain order, and then to rely on an efficient algorithm for polynomial evaluation. This is a fair strategy, even from a theoretical point of view, since, for $\omega\rightarrow0$, the considered EPH spaces become exactly polynomial spaces. In our numerical computations we considered $5^{th}$ order Taylor expansions which, for completeness, can be found in \cref{SM:Taylor}.

Here we propose a new ad hoc point-wise evaluation algorithm and we compare it with the de Casteljau-like B-algorithm \cite{MP99} and the recent method proposed by Wo\'zny and Chudy in \cite{WC20}. For the sake of brevity, we only provide a sketch of these two algorithms in \cref{alg:deCast} and \cref{alg:WC}, where the auxiliary functions $\lambda^\omega_{i,j,m}$ and $h^\omega_{j,m}$ are constructed following the strategies detailed in \cite{MP99} and \cite{WC20}, respectively. In order to implement the methods, we recall that all the functions involved must be rewritten in a stable form as the basis functions in \cref{SM:stable_phi}.

Each of these methods has a different running time and a different behavior as $\omega$ approaches $0$. As it is shown in this section, the newly proposed algorithm yields the best results on both fronts and therefore we suggest it as the go-to evaluation algorithm for EPH curves.

\begin{algorithm}[h!]
	\caption{de Casteljau-like} \label{alg:deCast}
	\begin{algorithmic}
		\STATE{Acquire ${\bf r}_0^0:={\bf r}_0,\;{\bf r}_1^0:={\bf r}_1,\; \dots,\; {\bf r}_{2m+1}^0:={\bf r}_{2m+1},\;\hat{t}$}
		\FOR{$k=1,\dots,2m+1$}
		\FOR{$i=0,\dots,2m+1-k$}
		\STATE{${\bf r}_i^{k}\;\longleftarrow\;(1-\lambda^\omega_{i,2m+1-k,m}(\hat{t})) \, {\bf r}_i^{k-1}\;+\;\lambda^\omega_{i,2m+1-k,m}(\hat{t}){\bf r}_{i+1}^{k-1}$}
		\ENDFOR
		\ENDFOR
		\RETURN ${\bf r}_0^{2m+1}$
	\end{algorithmic}	
\end{algorithm}

\begin{algorithm}[h!]
	\caption{Wo\'zny-Chudy} \label{alg:WC}
	\begin{algorithmic}
		\STATE{Acquire $\mathbf{q}_0:={\bf r}_0,\; {\bf r}_1,\;\dots,\;{\bf r}_{2m+1},\;\hat{t}$}
		\FOR{$k=1,\dots,2m+1$}
		\STATE{${\bf q}_k\;\longleftarrow\;(1-h^{\omega}_{2m+1-k,m}(\hat{t})) \, {\bf q}_{k-1}\;+\;h^{\omega}_{2m+1-k,m}(\hat{t}){\bf r}_{k} $}
		\ENDFOR
		\RETURN ${\bf q}_{2m+1}$
	\end{algorithmic}	
\end{algorithm}

We recall that, fixed $d\in\{2,3\}$ and $m\in\{1,2\}$, our interest here is to evaluate the curve 
$
{\bf r}(t)=\sum_{i=0}^{2m+1}  {\bf r}_i  \phi^\omega_{i,m}(t),
$
at a given $\hat t\in[0,1]$, for a set of control points
${\bf r}_i \in \RR^d$, $i=0,\dots,2m+1$. The de Casteljau's algorithm finds the value of $\mathbf{r}(\hat t)$ computing recursively $2m+1$ new sets of points, $\{\mathbf{r}^k_{i}\}_{i=0}^{2m+1-k}$, $k=1,\dots,2m+1$, each having one fewer point than the previous one. At each level $k$, the new set of points is obtained as a convex combination of two consecutive points in the previous level. Instead of computing smaller and smaller sets of control points, Wo\'zny and Chudy's method consists in $2m+1$ convex combinations, each of them adding the contribution of one of the initial control points. A graphical layout of the algorithm can be seen in the first row of \cref{fig:WC_vs_new}. 

\begin{figure}[h!]
	\centering
	\includegraphics[width=0.75\textwidth]{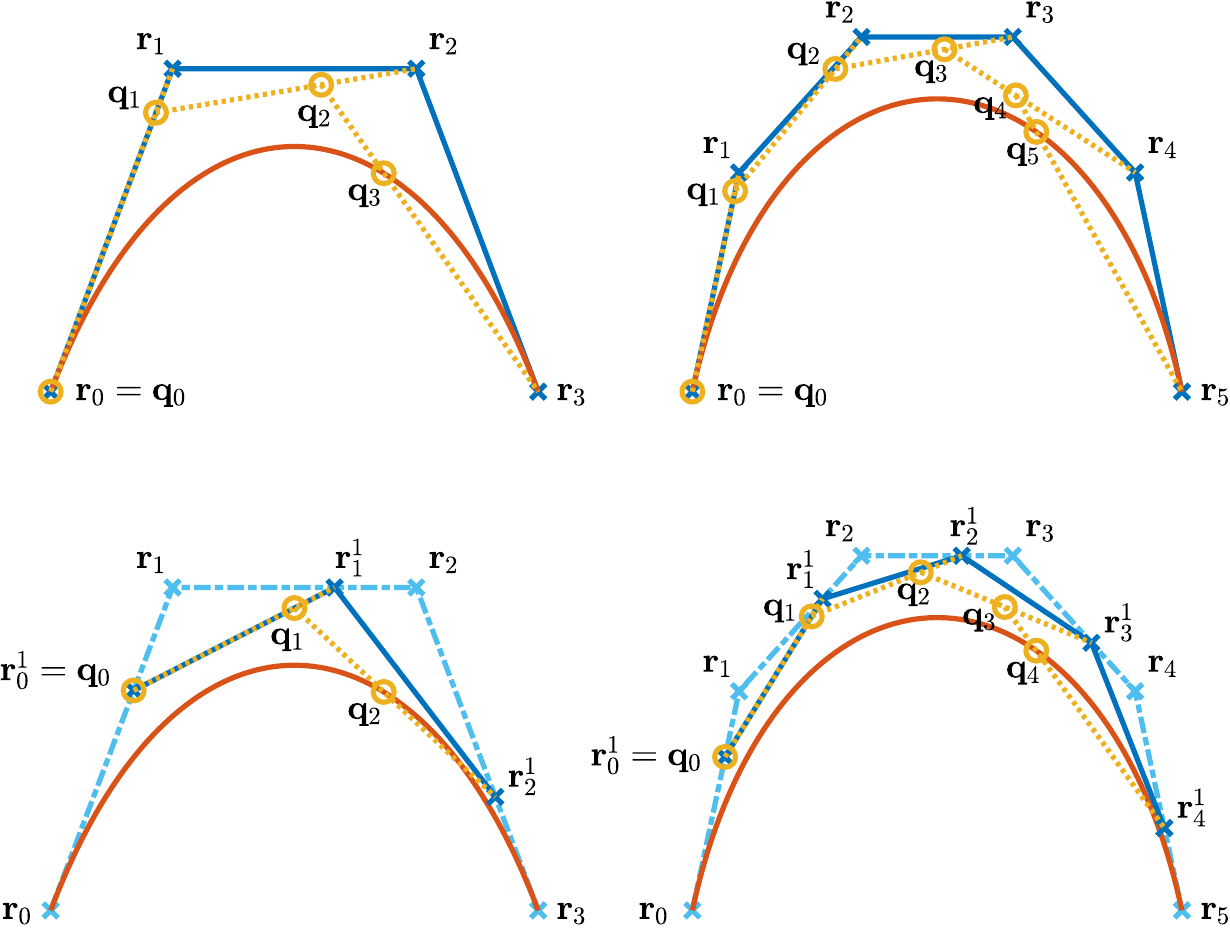}
	\caption{Geometrical comparison between Wo\'zny-Chudy's evaluation algorithm (first row) and the new proposed method (second row) when applied to a curve in ${\cal EP}^\omega_1$ (first column) and a curve in ${\cal EP}^{\omega}_{2}$ (second column).}
	\label{fig:WC_vs_new}
\end{figure}

The new algorithm proposed here fuses in a way both de Casteljau's and Wo\'zny-Chudy's approaches. The idea is to first compute a new set of control vertices, $\{\mathbf{r}^1_i\}_{i=0}^{2m}$, starting from the initial control points, similar to a de Casteljau's step. 
These new vertices are computed such that the associated polynomial B\'ezier curve of degree $2m$ has the same evaluation as ${\bf r}(t)$ at the desired point $\hat t\in(0,1)$, i.e.,
$
{\bf r}(\hat{t})=\sum_{i=0}^{2m}  {\bf r}_i^1  B_{i,2m}(\hat{t})
$
with $B_{i,2m}(\hat{t})=\binom{2m}{i}\, \hat{t}^i \, (1-\hat{t})^{2m-i}$,
where the right-hand side can be efficiently computed via Wo\'zny-Chudy's for polynomial curves, which is much faster than its specialized version for EPH curves. The detailed steps of the method are described in \cref{alg:new}, where, for $m=1$,
\[
\begin{array}{c} 
	\tau^\omega_{0,1}(t)\;=\; {\frac{\phi^\omega_{0,1}(t)}{(1-t)^2}}, \qquad
	
	\tau^\omega_{2,1}(t)\;=\; {1\;-\;\frac{\phi^\omega_{3,1}(t)}{t^2}}, \\ \\
	
	\tau^\omega_{1,1}(t)\;=\;
	{\frac{\phi^\omega_{0,1}(t)\;+\;\phi^\omega_{1,1}(t)\;-\;(1-t)^2}{2t(1-t)}\;=\;
		1\;-\;\frac{\phi^\omega_{2,1}(t)\;+\;\phi^\omega_{3,1}(t)\;-\;t^2}{2t(1-t)}},
\end{array}
\]
and, for $m=2$,
\[
\begin{array}{c} 
	\tau^\omega_{0,2}(t)\;=\; {\frac{\phi^\omega_{0,2}(t)}{(1-t)^4}},\qquad
	
	\tau^\omega_{1,2}(t)\;=\; { \frac{\phi^\omega_{0,2}(t)\;+\;\phi^\omega_{1,2}(t)\;-\;(1-t)^4}{4t(1-t)^3}},\\ \\
	
	\tau^\omega_{2,2}(t)\;=\; { \frac{\sum_{i=0}^2\phi^\omega_{i,2}(t)\;-\;(1-t)^4\;-\;4t(1-t)^3}{6t^2(1-t)^2}
		\;=\; 1\;-\;\frac{\sum_{i=3}^5\phi^\omega_{i,2}(t)\;-\;4t^3(1-t)\;-\;t^4}{6t^2(1-t)^2}},\\ \\
	
	\tau^\omega_{3,2}(t)\;=\; { 1\;-\;\frac{\phi^\omega_{4,2}(t)\;+\;\phi^\omega_{5,2}(t)\;-\;t^4}{4t^3(1-t)}},\qquad 
	
	\tau^\omega_{4,2}(t)\;=\; {1\;-\;\frac{\phi^\omega_{5,2}(t)}{t^4}}.
\end{array}
\]
As for the basis functions, the stable expressions for $\{\tau^\omega_{j,m}(t)\}_{j=0}^{2m}$ exploited in our implementation can be found in \cref{SM:stable_tau}. A graphical layout of the algorithm can be seen in the second row of \cref{fig:WC_vs_new}.
{
	\begin{remark}
		The functions $\tau^\omega_{j,m}(\hat{t})$ have removable discontinuities in $\hat{t}=0$ and $\hat{t}=1$. These are bypassed by the first two \enquote{if}s in Algorithm \ref{alg:new}. In theory, one should be careful to evaluate for $\hat{t}$ close to $0$ or $1$, e.g., approximating each $\tau^\omega_{j,m}(\hat{t})$ with its truncated Taylor expansion. In practice, while using MATLAB, problems occur only for values of $\hat{t}$ which are extremely close to $0$ and $1$. For instance, evaluation of PH curves in ${\cal EP}_2^{\omega}$ for values of $\hat{t}$ close to $0$ starts giving problems at $\hat{t}\approx 10^{-80}$. Since this limitation does not affect its practical use, for the sake of simplicity, Algorithm \ref{alg:new} does not include any modification to handle that situation.
	\end{remark}
}
\begin{algorithm}[h!]
	\caption{New proposal} \label{alg:new}
	\begin{algorithmic}
		\STATE{Acquire ${\bf r}_0,\; \dots,\;{\bf r}_{2m+1},\;\hat{t}$}
		{
			\IF{$\hat{t}=0$}
			\RETURN{$\mathbf{r}_0$}
			\ELSIF{$\hat{t}=1$}
			\RETURN{$\mathbf{r}_{2m+1}$}
			\ELSE
			\FOR{$j=0,\dots,2m$}
			\STATE{${\bf r}_j^1\;\longleftarrow\; \tau^\omega_{j,m}(\hat{t}) {\bf r}_j +  \big(1-\tau^\omega_{j,m}(\hat{t}) \big) {\bf r}_{j+1}$}
			\ENDFOR
			\IF{$\hat{t}\in[0.5,1)$}
			\STATE{$\mathbf{q}_0\;\longleftarrow\;\mathbf{r}_0^1,\quad h_0\;\longleftarrow\;1 \quad\text{and}\quad D\;\longleftarrow\;\frac{1-\hat{t}}{\hat{t}}$}
			\FOR{$k=1,\dots,2m$}
			\STATE{$h_k\;\longleftarrow\;\left( 1+ \frac{k D}{(2m+1-k) h_{k-1}}  \right)^{-1}\quad\text{and}\quad{\bf q}_k\;\longleftarrow\;(1-h_k) \, {\bf q}_{k-1}\;+\;h_k{\bf r}_{k}^1$}
			\ENDFOR
			\ELSIF{$\hat{t}\in(0,0.5)$}
			\STATE{$\mathbf{q}_0\;\longleftarrow\;\mathbf{r}_{2m}^1,\quad h_0\;\longleftarrow\;1 \quad\text{and}\quad D\;\longleftarrow\;\frac{\hat{t}}{1-\hat{t}}$}
			\FOR{$k=1,\dots,2m$}
			\STATE{$h_k\;\longleftarrow\;\left( 1+ \frac{k D}{(2m+1-k) h_{k-1}}  \right)^{-1}\quad\text{and}\quad{\bf q}_k\;\longleftarrow\;(1-h_k) \, {\bf q}_{k-1}\;+\;h_k{\bf r}_{2m-k}^1$}
			\ENDFOR
			\ENDIF
			\RETURN ${\bf q}_{2m}$
			\ENDIF
		}
	\end{algorithmic}	
\end{algorithm}

\subsection{Comparing the three evaluation methods}

We start comparing the behaviour of the three methods as $\omega$ goes to $0$. In order to do so, we computed, for $500$ equispaced values of $\omega\in(0,2]$, the maximum over $100$ curves with random control points uniformly distributed in $(0,1)^d$ of the relative error in the infinity norm committed by each method in approximating the $5^{th}$ order Taylor expansion of the curve at $\omega=0$. In other words, in \cref{fig:Taylor_2}, one can see, for $d=3$ and $m\in\{1,2\}$, the behaviour of the function
\begin{equation} \label{eq:epsilon}
	\rho_{d,m}(\omega)\;:=\;\max_{\{\mathbf{r}_i\}_{i=0}^{2m+1}\in\mathcal{R}}\;\frac{\left\|\;\left.\sum_{i=0}^{2m+1}\mathbf{r}_iT^\omega_{i,m}(t)\right|_{\mathbf{z}}\;-\;
		\left.\sum_{i=0}^{2m+1}\mathbf{r}_i\phi^\omega_{i,m}(t)\right|_{\mathbf{z}}\;\right\|_\infty}{\left\|\;\left.\sum_{i=0}^{2m+1}\mathbf{r}_iT^\omega_{i,m}(t)\right|_{\mathbf{z}}\;\right\|_\infty},
\end{equation}
where $\mathcal{R}$ is a collection of $100$ random sets of $2m+2$ control points in $(0,1)^d$,
$\mathbf{z}=[k/500]_{k=0}^{500}$ and $\sum_{i=0}^{2m+1}\mathbf{r}_i\phi^\omega_{i,m}(t)\big|_{\mathbf{z}}$ is computed with each of the three considered methods. From a theoretical point of view, as $\omega$ gets closer and closer to $0$, an exact evaluation of the curve should approach the evaluation of the polynomial curve obtained substituting each basis function $\phi^\omega_{i,m}$ with its corresponding Taylor polynomial, and thus we should get $\rho_{d,m}(\omega)\rightarrow0$ for $\omega\rightarrow0$. Since stability for small $\omega$ is not achievable, we have that, for each method, the value of $\rho_{d,m}(\omega)$ decreases until a certain threshold is met, under which $\rho_{d,m}(\omega)$ starts to increase and the method becomes unreliable. In particular, from \cref{fig:Taylor_2} it is possible to see how the newly proposed algorithm is the one that can get the closest to $0$ without having numerical issues. For the sake of completeness the points of minimum found for each graph are reported in \cref{tab:confront}. {Therefore, the proposed method is the one that allows exact evaluation for the largest subset of $\omega\in\mathbb{R}^+$.}
\begin{table}[h!]
	\caption{Estimated $\overline{\omega}=\underset{{\omega\in(0,2]}}{\text{argmin}} \,\rho_{d,m}(\omega)$ for $\rho_{d,m}(\omega)$ in \eqref{eq:epsilon} with $d=3$ and $m\in\{1,2\}$ for the three considered methods and the direct evaluation of \eqref{eq:U3basis} and \eqref{eq:U5basis} as a benchmark.} \label{tab:confront} 
	\begin{center}
		\begin{tabular}{c|c|c|c|c}
			\hline
			$m$ & de Casteljau-like & Wo\'zny-Chudy & New proposal & direct evaluation \\
			\hline
			$1$ & 0.2760& 0.2200& 0.0960 & 0.2120 \\
			\hline
			$2$ & 1.1160& 0.3800& 0.1840 & 0.3680 \\
			\hline
		\end{tabular}
	\end{center}
\end{table}

Concerning the running time of the three algorithms, fixed $d=3$ and $m\in\{1,2\}$, for each $\omega\in\{0.0960+2^k\}_{k=-50}^{50}$ we evaluated $10000$ random curves at $501$ equispaced points in $[0,1]$. The results are visible in \cref{fig:time}, where again the new proposed algorithm is the best performing one in both scenarios and for every value of $\omega$ we considered. We observe that the slope around $\omega=10^3$ is due to the fact that most of the exponential functions involved in the computations become very small and thus are set to $0$, speeding up most computations. All numerical experiments were done in MATLAB 2021b on a laptop equipped with an Intel Core i7-10870H CPU and 32 GB RAM.

\begin{figure}
	\centering
	\begin{minipage}{0.49\textwidth}
		\centering
		\includegraphics[width=0.75\textwidth]{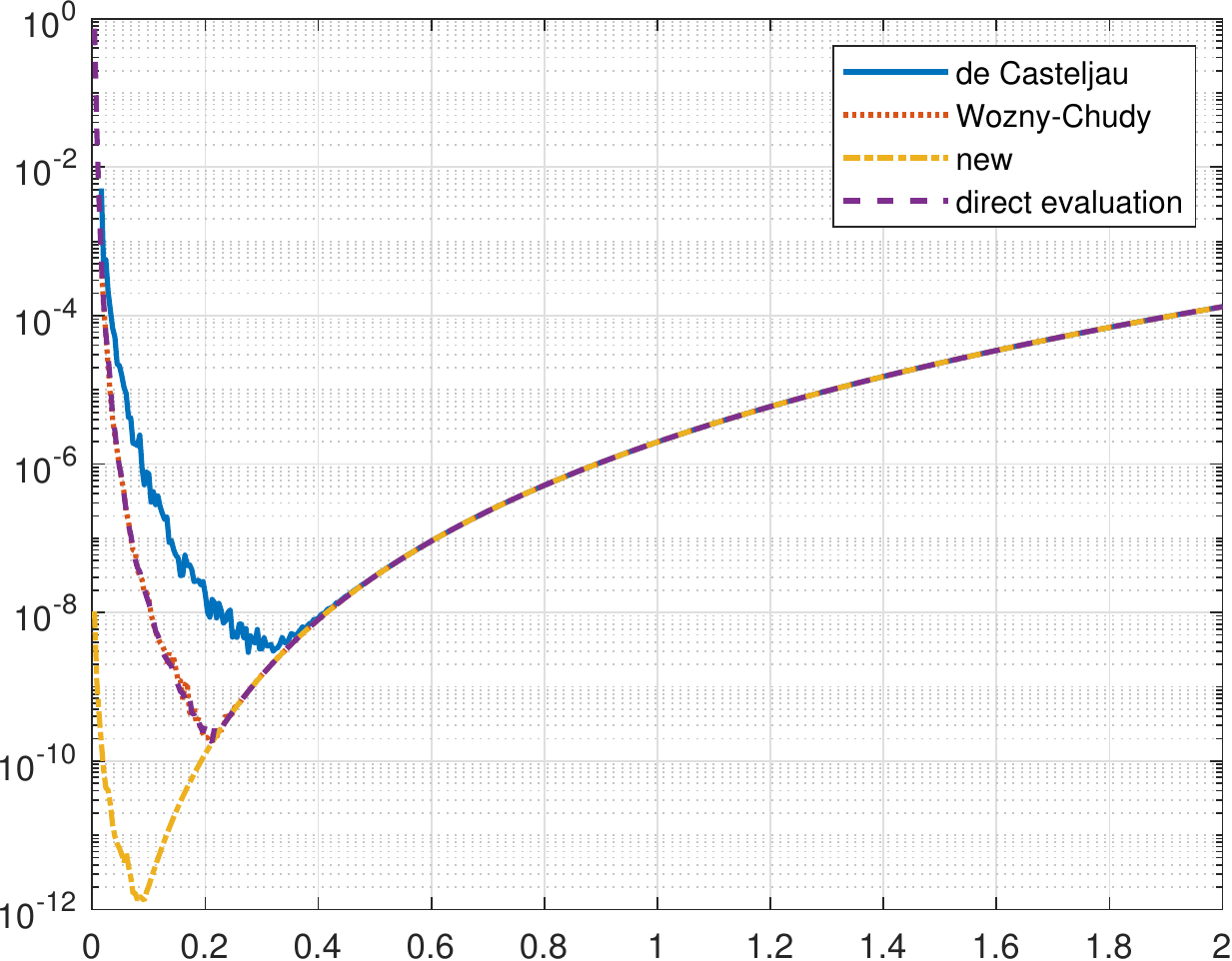}\\
		$m=1$
	\end{minipage}
	\begin{minipage}{0.49\textwidth}
		\centering
		\includegraphics[width=0.75\textwidth]{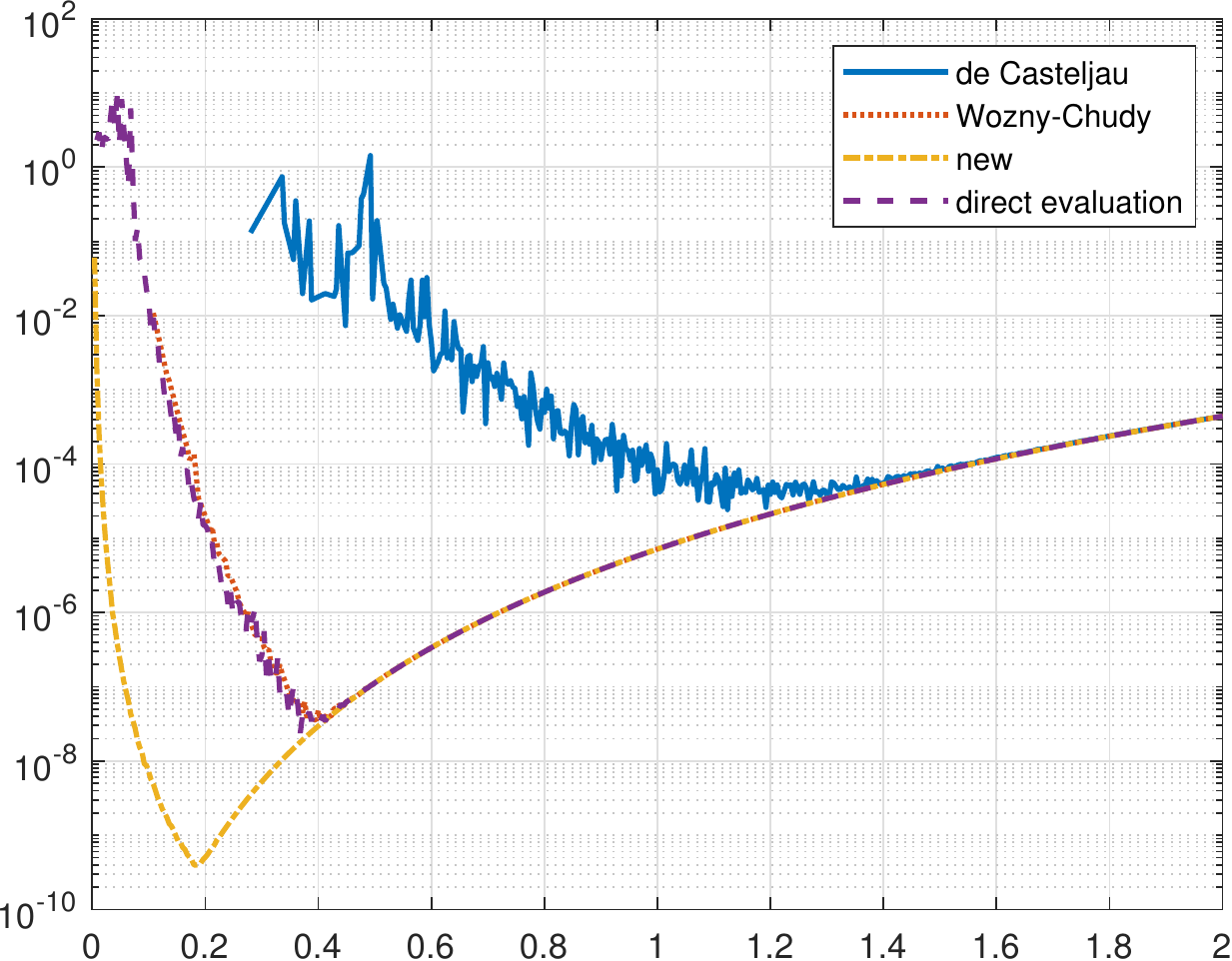}\\
		$m=2$
	\end{minipage}
	\caption{The function $\rho_{d,m}(\omega)$ in \eqref{eq:epsilon} for $d=3$ and $m\in\{1,2\}$ computed with three different methods (the de Casteljau-like B-algorithm, Wo\'zny-Chudy's algorithm and the new proposed method) and using the direct evaluation of \eqref{eq:U3basis} and \eqref{eq:U5basis} as a benchmark.}
	\label{fig:Taylor_2}
\end{figure}

\begin{figure}
	\centering
	\begin{minipage}{0.49\textwidth}
		\centering
		\includegraphics[width=0.75\textwidth]{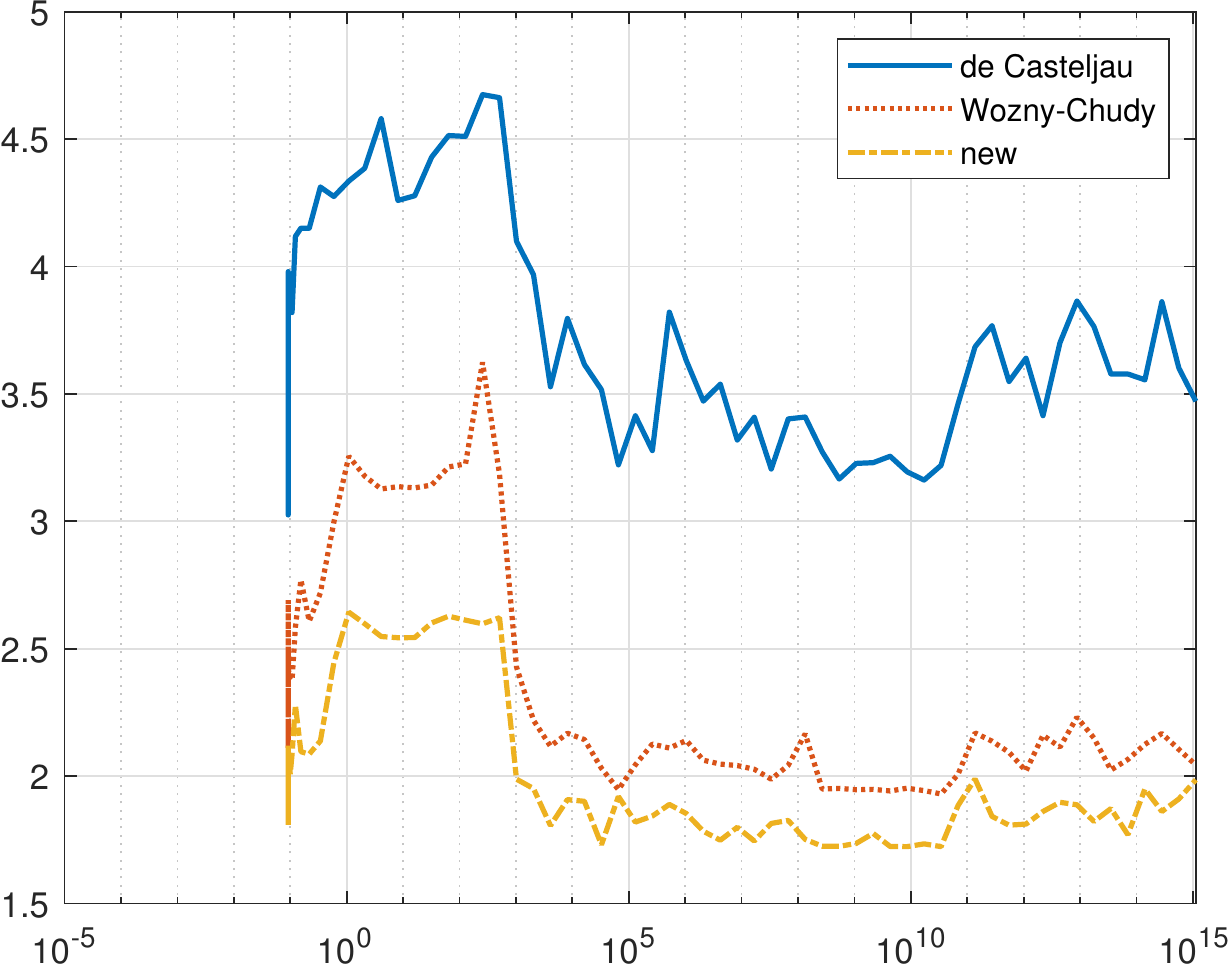}\\
		$m=1$
	\end{minipage}
	\begin{minipage}{0.49\textwidth}
		\centering
		\includegraphics[width=0.75\textwidth]{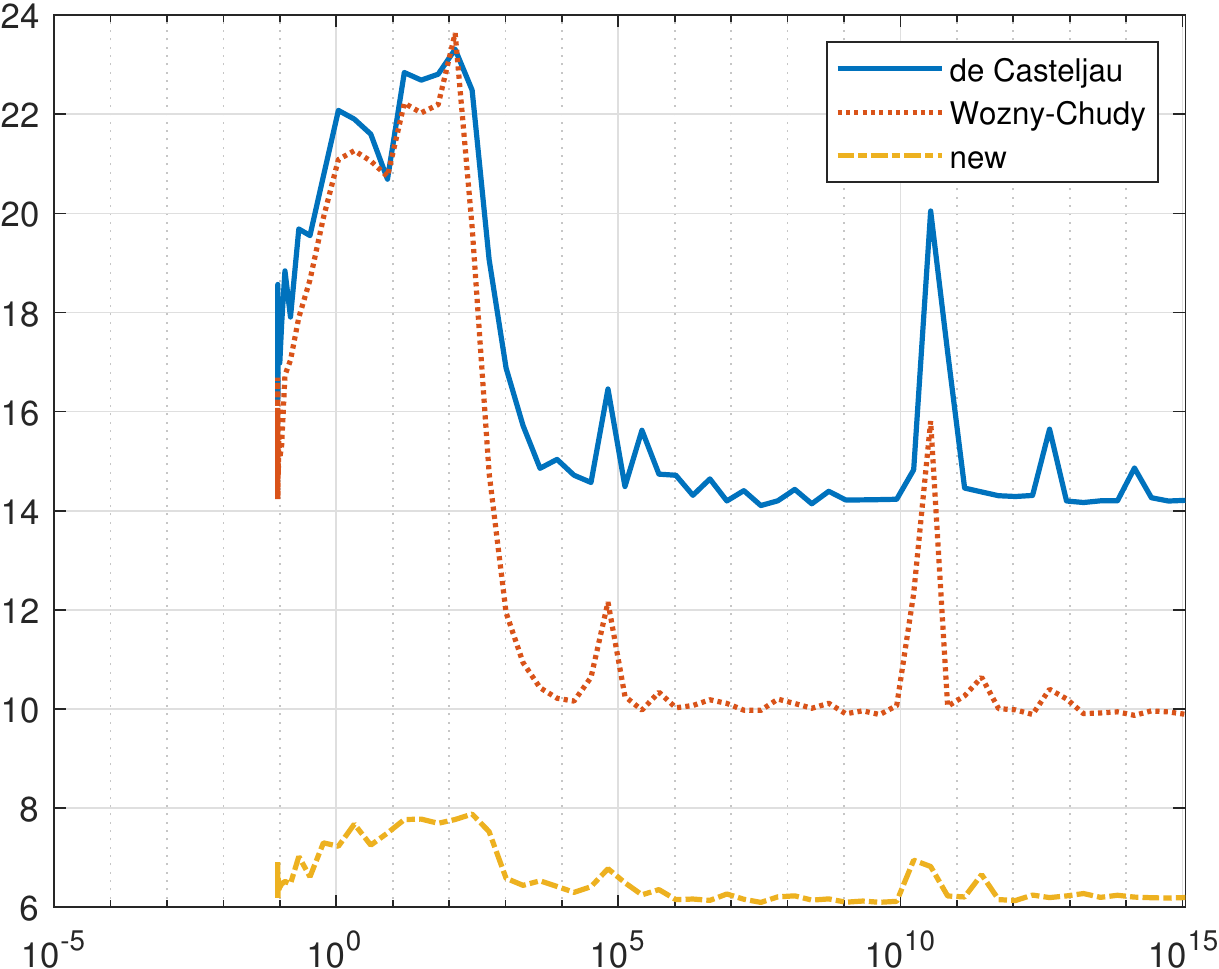}\\
		$m=2$
	\end{minipage}
	\caption{Running times in seconds of the three considered methods for the evaluation of $10000$ random curves, varying $\omega$ in $\{0.0920+2^k\}_{k=-50}^{50}$, for $d=3$ and $m\in\{1,2\}$.}
	\label{fig:time}
\end{figure}

\subsection{A note on a fourth algorithm}
We conclude this section with a short discussion about the dynamic evaluation algorithm presented in \cite{YH17,YH19} which, although can be specialized for EPH curves, presents stability issues for large values of $\omega$. To explain why this is the case, we begin with a brief review of the method. First, it must be that
$\det\left(\left[\mathbf{r}_{2m-d+2},\dots,\mathbf{r}_{2m+1}\right]\right)\neq0$.
Then the method evaluates $\mathbf{r}(t)$ in $k\in\mathbb{N}\setminus\{1\}$ equispaced points over $[0,1]$, finding
$
\mathbf{y}_i=\mathbf{r}(ih)\in\mathbb{R}^{d}
$, $i=0, \ldots, k-1$
where $h=1/(k-1)$. Once the matrices
$\mathbf{R}_1:=\bigl[ \mathbf{r}_0,\dots,\mathbf{r}_{2m-d+1} \bigr]\in\mathbb{R}^{d\times(2m-d+2)}$ and $\mathbf{R}_2:=\bigl[  \mathbf{r}_{2m-d+2},\dots,\mathbf{r}_{2m+1} \bigr]\in\mathbb{R}^{d\times d}$ are defined,
the problem is lifted to dimension $2m+2$, where the new control points are the columns of
$
\mathbf{R}:=\bigl[ \begin{smallmatrix} 
	\mathbf{R}_1,&\mathbf{R}_2\\
	\mathbf{I}_{2m-d+2},&\mathbf{O}_{(2m-d+2)\times d} 
\end{smallmatrix} \bigr]\in\mathbb{R}^{(2m+2)\times(2m+2)},
$
where $\mathbf{I}_{2m-d+2}$ is the $(2m-d+2)$-dimensional identity matrix and $\mathbf{O}_{(2m-d+2)\times d}$ is the  $(2m-d+2)\times d$ matrix of zeros. It is easy to see that $\mathbf{R}$ is invertible with
$
\mathbf{R}^{-1}=\bigl[ \begin{smallmatrix} 
	\mathbf{O}_{(2m-d+2)\times d},&\mathbf{I}_{2m-d+2}\\
	\mathbf{R}_2^{-1},&\mathbf{R}_2^{-1}\;\mathbf{R}_1
\end{smallmatrix} \bigr].
$
Now consider the following recursion:
\begin{equation} \label{eq:dyn_eval_rec}
	\begin{array}{c}
		\mathbf{z}_0\;=\;\mathbf{R}\;\mathbf{e}_1,
		\qquad\mathbf{z}_i\;=\;\mathbf{M}\;\mathbf{z}_{i-1}\;=\;\mathbf{M}^i\;\mathbf{z}_0,\qquad i=1,\dots,k-1,
	\end{array}
\end{equation}
where $\mathbf{e}_1=[\delta_{1,j}]_{j=1}^{2m+2}$, $\delta_{i,j}$ being the Kronecker delta,
$
\mathbf{M}=\mathbf{R}\mathbf{C}^\omega_{h,m}\mathbf{R}^{-1},
$
and $\mathbf{C}^\omega_{h,m}\in\mathbb{R}^{(2m+2)\times(2m+2)}$ is the unique matrix such that
\[
\begin{bmatrix}
	\phi^\omega_{0,m}(t+h)\\
	\vdots\\
	\phi^\omega_{2m+1,m}(t+h)
\end{bmatrix}\;=\;\mathbf{C}^\omega_{h,m}\;\begin{bmatrix}
	\phi^\omega_{0,m}(t)\\
	\vdots\\
	\phi^\omega_{2m+1,m}(t)
\end{bmatrix},\qquad 0\leq t+h\leq1,\; t\in[0,1].
\]
Then
$
\mathbf{y}_i=\begin{bmatrix}
	\mathbf{I}_d,& \mathbf{O}_{d\times(2m+2)}
\end{bmatrix}\mathbf{z}_i
$, $i=0,\dots,k-1$.
In other words, once we have the evaluations of the lifted curve $\{\mathbf{z}_i\}_{i=0}^{k-1}$, we only need to consider the first $d$ components to find the solution for the initial low-dimensional problem.

Let us now focus on the space ${\cal EP}^{\omega}_1$. To use the previous method we need to compute the matrix $\mathbf{C}^\omega_{h,1}$.
Since 
\[
{
	\mathbf{B}}\;\begin{bmatrix}
	\phi^\omega_{0,1}(t)\\
	\phi^\omega_{1,1}(t)\\
	\phi^\omega_{2,1}(t)\\
	\phi^\omega_{3,1}(t)\\
\end{bmatrix}\;=\;\begin{bmatrix}
	1 \\
	t \\
	e^{\omega t}\\
	e^{-\omega t}
\end{bmatrix}\quad\text{ with }\quad
{\mathbf{B}\;=\;
	\begin{bmatrix} 
		1 & 1 & 1 & 1\\ 
		0 & c_2(\omega) & 1-c_2(\omega) & 1\\ 
		1 & 1+\omega  c_2(\omega) & e^\omega(1-\omega  c_2(\omega)) & e^\omega\\ 
		1 & 1-\omega c_2(\omega) & e^{-\omega}(1+\omega  c_2(\omega)) & e^{-\omega}
	\end{bmatrix}	
}
\]
and
\[
\begin{bmatrix}
	1 \\
	t+h \\
	e^{\omega(t+ h)}\\
	e^{-\omega(t+ h)}
\end{bmatrix}\;=\;\widehat{\mathbf{C}}^\omega_{h,1}\;\begin{bmatrix}
	1 \\
	t \\
	e^{\omega t}\\
	e^{-\omega t}
\end{bmatrix},\qquad\text{ with }\qquad\widehat{\mathbf{C}}^\omega_{h,1}\;=\;\begin{bmatrix}
	1 & 0 & 0 & 0\\
	h& 1 & 0 & 0 \\
	0 & 0 & e^{\omega h} & 0 \\
	0 & 0 & 0 & e^{-\omega h}
\end{bmatrix},
\]
we have that
\[
\begin{bmatrix}
	\phi^\omega_{0,1}(t+h)\\
	\phi^\omega_{1,1}(t+h)\\
	\phi^\omega_{2,1}(t+h)\\
	\phi^\omega_{3,1}(t+h)\\
\end{bmatrix}\;=\;\mathbf{C}^\omega_{h,1}\;\begin{bmatrix}
	\phi^\omega_{0,1}(t)\\
	\phi^\omega_{1,1}(t)\\
	\phi^\omega_{2,1}(t)\\
	\phi^\omega_{3,1}(t)\\
\end{bmatrix},\qquad\text{ with }\qquad 
\mathbf{C}^\omega_{h,1}\;=\;
{
	\mathbf{B}^{-1}\;\widehat{\mathbf{C}}^\omega_{h,1}\;\mathbf{B}}.
\]
In particular, it can be shown that
\[\begin{array}{rcl}
	\mathbf{C}^\omega_{h,1}(4,4)&=&\frac{\sinh(\omega(1+h))\;-\;\omega(1+h)}{\sinh(\omega)\;-\;\omega}\;=\;\frac{e^{\omega(1+h)}\;-\;e^{-\omega(1+h)}\;-\;2\omega(1+h)}{e^{\omega}\;-\;e^{-\omega}\;-\;2\omega} \\ \\
	&=&e^{\omega h}\;\frac{1\;-\;e^{-2\omega(1+h)}\;-\;2\omega e^{-\omega(1+h)}}{1\;-\;e^{-2\omega}\;-\;2\omega e^{-\omega}}\;=\;\mathcal{O}(e^{\omega h})\qquad\text{ for }\qquad \omega\rightarrow+\infty,
\end{array}\]
since $h\in(0,1]$. This fact propagates to $\mathbf{M}$ and its powers during the recursion \eqref{eq:dyn_eval_rec} which ends with $\mathbf{M}^{k-1}$ having an element that is $\mathcal{O}(e^\omega)$, making the computations numerically unstable already for $\omega$ of order $10^1$.
In a similar way it is possible to check that the same happens for the space ${\cal EP}^{\omega}_2$. Thus, it is not advisable to use this evaluation method in the context here described.

\appendix

\section{Proof of \cref{prop:EP4}} \label{SM:prop:EP4}
\begin{proof}
	(a) is a consequence of the fact that $\phi_{i,1}^{\omega}(t) \geq 0$ for all $t\in[0,1]$ and $\sum_{i=0}^3 \phi_{i,1}^{\omega}(t)=1$ for all $t \in [0,1]$;\\
	(b) is due to the fact that $\phi_{i,1}^{\omega}(t)=\phi_{3-i,1}^{\omega}(1-t)$ for all $t\in[0,1]$, $i=0,\ldots,3$;\\
	(c) follows from the fact that
	$$
	\begin{array}{l}
		\frac{d}{dt} \phi_{0,1}^{\omega}(t)\;=\; -
		\frac{\varphi_{0,1}^{\omega}(t)}{\int_0^1\varphi^\omega_{0,1}(x)\;dx}, \\ \\
		\frac{d}{dt} \phi_{i,1}^{\omega}(t)\;=\; \frac{\varphi_{i-1,1}^{\omega}(t)}{ \int_0^1\varphi^\omega_{i-1,1}(x)\;dx}\;-\;\frac{\varphi_{i,1}^{\omega}(t)}{ \int_0^1\varphi^\omega_{i,1}(x)\;dx}
		\qquad i\;=\;1,\;2 \\ \\
		\frac{d}{dt} \phi_{3,1}^{\omega}(t)\;=\; \frac{\varphi_{2,1}^{\omega}(t)}{\int_0^1\varphi^\omega_{2,1}(x)\;dx};
	\end{array}
	$$
	(d) is a consequence of (c).
\end{proof}

\section{Proof of \cref{prop:Bez_EP1}} \label{SM:prop:Bez_EP1}
\begin{proof}
	By substituting \eqref{eq:preimage_ep4} into \eqref{eq:hodograph_quaternionic_formg}
	we obtain
	$$
	\begin{array}{lll}
		\mathbf{r}'(t) \hspace{-0.2cm} &=& \hspace{-0.2cm} \mathbf{A}_0 \mathbf{i} \mathbf{A}_0^* \, \left(\psi_{0,1}^{\omega}(t)\right)^2 \,+\,
		(\mathbf{A}_0 \mathbf{i} \mathbf{A}_1^*+\mathbf{A}_1 \mathbf{i} \mathbf{A}_0^*) \, \psi_{0,1}^{\omega}(t) \psi_{1,1}^{\omega}(t) \,+\,
		\mathbf{A}_1 \mathbf{i} \mathbf{A}_1^* \, \left(\psi_{1,1}^{\omega}(t)\right)^2
	\end{array}
	$$
	and thus, in light of \eqref{equivalenze1},
	\begin{equation}\label{eq:hodograph_quaternion}
		\mathbf{r}'(t)=\mathbf{A}_0 \mathbf{i} \mathbf{A}_0^*  \varphi_{0,1}^{\omega}(t) +
		(\mathbf{A}_0 \mathbf{i} \mathbf{A}_1^*+\mathbf{A}_1 \mathbf{i} \mathbf{A}_0^*)  \frac{1}{2} {c_1(\omega)}  \varphi_{1,1}^{\omega}(t) +
		\mathbf{A}_1 \mathbf{i} \mathbf{A}_1^*  \varphi_{2,1}^{\omega}(t).
	\end{equation}
	Since $\mathbf{r}(t)=\mathbf{r}_0+\int_0^t \mathbf{r}'(x) dx$, by integrating the expression in \eqref{eq:hodograph_quaternion} exploiting the formulae in \eqref{integrali1}, 
	we obtain the B\'ezier-like form of $\mathbf{r}(t)$ with control points in \eqref{eq:cps3d}.
\end{proof}

\section{Proof of \cref{prop:par_speed_EP1}} \label{SM:prop:par_speed_EP1}
\begin{proof}
	Since $\sigma(t)=\mathbf{A}(t) \mathbf{A}^*(t)$, in light of \eqref{eq:preimage_ep4} we can write
	$$
	\sigma(t)\;=\;|\mathbf{A}_0|^2 \ \left(\psi_{0,1}^{\omega}(t)\right)^2
	\;+\; (\mathbf{A}_1 \mathbf{A}_0^* + \mathbf{A}_0 \mathbf{A}_1^*) \  \psi_{0,1}^{\omega}(t) \psi_{1,1}^{\omega}(t)
	\;+\; |\mathbf{A}_1|^2 \ \left(\psi_{1,1}^{\omega}(t)\right)^2.
	$$
	Then, exploiting \eqref{equivalenze1}, the claimed result is obtained.
\end{proof}

\section{Proof of \cref{prop:arc_EP1}} \label{SM:prop:arc_EP1}
\begin{proof}
	Since
	$$
	s(t)\;=\;\int_0^t{\sigma(x) dx} \;=\;  \int_0^t \sum_{i=0}^2 \sigma_i \varphi_{i,1}^{\omega}(x) dx \;=\; \sum_{i=0}^2 \sigma_i \int_0^t \varphi_{i,1}^{\omega}(x) dx,
	$$
	then, recalling formulae \eqref{integrali1} we arrive at
	\[
	s(t)\;=\; {\sigma_2 \,  c_2(\omega) \phi_{3,1}^{\omega}(t)\;
		+\; \sigma_1 \, 
		\frac{c_3(\omega)}{c_1(\omega)} 
		\, \sum_{i=2}^3 \phi_{i,1}^{\omega}(t) \;
		+\;\sigma_0 \,  
		c_2(\omega) \, 
		\, \sum_{i=1}^3 \phi_{i,1}^{\omega}(t) \;}
	\]
	and, by collecting the coefficients of each basis function $\phi_{i,1}^{\omega}(t)$, $i=0,\ldots,3$, we get the claimed result.
\end{proof}

\section{Proof of \cref{prop:EP6}} \label{SM:prop:EP6}
\begin{proof}
	(a) is a consequence of the fact that $\phi_{i,2}^{\omega}(t) \geq 0$ for all $t\in[0,1]$ and $\sum_{i=0}^5 \phi_{i,2}^{\omega}(t)=1$ for all $t \in [0,1]$;\\
	(b) is due to the fact that $\phi_{i,2}^{\omega}(t)=\phi_{5-i,2}^{\omega}(1-t)$ for all $t\in[0,1]$, $i=0,\ldots,5$;\\
	(c) follows from the fact that
	$$
	\begin{array}{l}
		\frac{d}{dt} \phi_{0,2}^{\omega}(t)\;=\; -\frac{\varphi_{0,2}^{\omega}(t)}{\int_0^1\varphi^\omega_{0,2}(x)\;dx}, \\ \\
		
		\frac{d}{dt} \phi_{i,2}^{\omega}(t)\;=\;\frac{\varphi_{i-1,2}^{\omega}(t)}{\int_0^1\varphi^\omega_{i-1,2}(x)\;dx}\;-\;\frac{\varphi_{i,2}^{\omega}(t)}{\int_0^1\varphi^\omega_{i,2}(x)\;dx},
		\qquad i=1,\dots,4 \\ \\
		
		\frac{d}{dt} \phi_{5,2}^{\omega}(t)\;=\; \frac{\varphi_{4,2}^{\omega}(t)}{\int_0^1\varphi^\omega_{4,2}(x)\;dx};
	\end{array}
	$$
	(d) is a consequence of (c).
\end{proof}

\section{Proof of \cref{prop:Bez_EP2}} \label{SM:prop:Bez_EP2}
\begin{proof}
	By substituting \eqref{eq:At_spatial_quintic} into \eqref{eq:hodograph_quaternionic_formg} we obtain
	\begin{multline*}
		\mathbf{r}'(t)\;=\;
		\sum_{j=0}^2\;\mathbf{A}_j\mathbf{i}\mathbf{A}_j^* \left(\psi^\omega_{j,2}(t)\right)^2 \;+\; (\mathbf{A}_0 \mathbf{i} \mathbf{A}_1^*+\mathbf{A}_1 \mathbf{i} \mathbf{A}_0^*) \, \psi_{0,2}^{\omega}(t) \psi_{1,2}^{\omega}(t) \\ 
		\qquad+\;  (\mathbf{A}_1 \mathbf{i} \mathbf{A}_2^*+\mathbf{A}_2 \mathbf{i} \mathbf{A}_1^*) \, \psi_{1,2}^{\omega}(t) \psi_{2,2}^{\omega}(t) \;+\; (\mathbf{A}_0 \mathbf{i} \mathbf{A}_2^*+\mathbf{A}_2 \mathbf{i} \mathbf{A}_0^*) \, \psi_{0,2}^{\omega}(t) \psi_{2,2}^{\omega}(t)
	\end{multline*}
	and thus, in light of \eqref{equivalenze2},
	\begin{multline}\label{eq:hodograph_quaternionII}
		\mathbf{r}'(t)\;=\;
		\mathbf{A}_0 \mathbf{i} \mathbf{A}_0^* \, \varphi_{0,2}^{\omega}(t)
		\;+\; (\mathbf{A}_0 \mathbf{i} \mathbf{A}_1^*+\mathbf{A}_1 \mathbf{i} \mathbf{A}_0^*) \, \frac{1}{2} \varphi_{1,2}^{\omega}(t) \\ 
		\qquad\qquad\qquad+\; \left( \mathbf{A}_1 \mathbf{i} \mathbf{A}_1^* \, q_0(\omega)
		+ (\mathbf{A}_0 \mathbf{i} \mathbf{A}_2^*+\mathbf{A}_2 \mathbf{i} \mathbf{A}_0^*) \frac{1}{2} \, q_1(\omega) \, \right) \varphi_{2,2}^{\omega}(t)\\ 
		+\; (\mathbf{A}_1 \mathbf{i} \mathbf{A}_2^*+\mathbf{A}_2 \mathbf{i} \mathbf{A}_1^*) \, \frac{1}{2} \varphi_{3,2}^{\omega}(t)
		\;+\; \mathbf{A}_2 \mathbf{i} \mathbf{A}_2^* \, \varphi_{4,2}^{\omega}(t).
	\end{multline}
	Since $\mathbf{r}(t)=\mathbf{r}_0+\int_0^t \mathbf{r}'(x) dx$, by integrating the expression in \eqref{eq:hodograph_quaternionII} exploiting the formulae in \eqref{integrali2},
	we obtain the B\'ezier-like form of $\mathbf{r}(t)$ with control points in \eqref{eq:cps3dII}. 
\end{proof}

\section{Proof of \cref{prop:par_speed_EP2}} \label{SM:prop:par_speed_EP2}
\begin{proof}
	Since $\sigma(t)=\mathbf{A}(t) \mathbf{A}^*(t)$, in light of \eqref{eq:At_spatial_quintic} we can write
	\begin{multline*}
		\sigma(t)\;=\;\sum_{j=0}^2\;|\mathbf{A}_j|^2 \left(\psi_{j,2}^\omega(t)\right)^2
		\;+\;(\mathbf{A}_1 \mathbf{A}_0^* + \mathbf{A}_0 \mathbf{A}_1^*) \, \psi_{0,2}^{\omega}(t) \psi_{1,2}^{\omega}(t)  \\ 
		+\;(\mathbf{A}_1 \mathbf{A}_2^* + \mathbf{A}_2 \mathbf{A}_1^*) \, \psi_{1,2}^{\omega}(t) \psi_{2,2}^{\omega}(t)
		\;+\;(\mathbf{A}_2 \mathbf{A}_0^* + \mathbf{A}_0 \mathbf{A}_2^*) \, \psi_{0,2}^{\omega}(t) \psi_{2,2}^{\omega}(t).
	\end{multline*}
	Then, exploiting \eqref{equivalenze2}, 
	we obtain the claimed result.
\end{proof}

\section{Proof of \cref{prop:arc_EP2}} \label{SM:prop:arc_EP2}
\begin{proof}
	Since
	$$
	s(t)\;=\;\int_0^t{\sigma(x) dx} \;=\;  \int_0^t \sum_{i=0}^4 \sigma_i \varphi_{i,2}^{\omega}(x) dx \;=\; \sum_{i=0}^4 \sigma_i \int_0^t \varphi_{i,2}^{\omega}(x) dx,
	$$
	recalling formulae \eqref{integrali2}, we then arrive at
	\begin{multline*}
		s(t)\;=\; {
			\sigma_0 \, 
			q_2(\omega) \, \sum_{i=1}^{5} \phi_{i,2}^{\omega}(t)
			\;+\; \sigma_1 \, 
			q_3(\omega) \,\sum_{i=2}^{5} \phi_{i,2}^{\omega}(t)  
			\;+\; \sigma_2 \, 
			\frac{q_4(\omega)}{q_1(\omega)} \, \sum_{i=3}^{5} \phi_{i,2}^{\omega}(t)  } \\
		\;+\;  \sigma_3 \, 
		q_3(\omega) \, \sum_{i=4}^{5} \phi_{i,2}^{\omega}(t) 
		\;+\;  \sigma_4 \,  q_2(\omega) \, \phi_{5,2}^{\omega}(t).
	\end{multline*}
	By collecting the coefficients of each basis function $\phi_{i,2}^{\omega}(t)$, $i=0,\dots,5$, we get the claimed result.
\end{proof}

\section{$5^{th}$ order Taylor expansions at $\omega=0$ of $\{\phi^\omega_{j,m}(t)\}_{j=0}^{2m+1}$, $m\in\{1,2\}$} \label{SM:Taylor}
For $\{\phi^\omega_{j,1}(t)\}_{j=0}^{3}$,
\[\begin{array}{c}
	T^\omega_{0,1}(t)\;=\;T^\omega_{3,1}(1-t),
	\qquad
	T^\omega_{1,1}(t)\;=\;T^\omega_{2,1}(1-t),
	\\ \\
	T^\omega_{2,1}(t)\;=\; 3t^2\left(1-t\right)\;\frac{
		(30t^4-40t^3+23t^2-12t-3)\omega^4
		\;+\;420(3t^2-2t+1)\omega^2
		\;+\;25200
	}{25200},\\ \\
	T^\omega_{3,1}(t)\;=\;t^3\;\frac{
		(10t^4-21t^2+11)\omega^4
		\;+\;420(t^2-1)\omega^2
		\;+\;8400
	}{8400},
\end{array}\]
and, for $\{\phi^\omega_{j,2}(t)\}_{j=0}^{5}$,
\[\begin{array}{c}
	T^\omega_{0,2}(t)\;=\; T^\omega_{5,2}(1-t),
	\qquad
	T^\omega_{1,2}(t)\;=\; T^\omega_{4,2}(1-t),
	\qquad
	T^\omega_{2,2}(t)\;=\; T^\omega_{3,2}(1-t),
	\\ \\
	T^\omega_{3,2}(t)\;=\;
	10t^3{\left(1-t\right)}^2\;\frac{
		(245t^4-392t^3+253t^2-82t+3)\omega^4
		\;+\;420(10t^2-8t+3)\omega^2
		\;+\;35280
	}{35280},\\ \\
	T^\omega_{4,2}(t)\;=\;
	5t^4\left(1-t\right)\; \frac{
		(245t^4-196t^3-96t^2+44t-1)\omega^4
		\;+\;840(5t^2-2t-1)\omega^2
		\;+\;35280
	}{35280},\\ \\
	T^\omega_{5,2}(t)\;=\;
	t^5\;\frac{
		(49t^4-100t^2+51)\omega^4
		\;+\;840(t^2-1)\omega^2
		\;+\;7056
	}{7056}.
\end{array}\]

\section{Stable expressions of $\{\phi^\omega_{j,m}(t)\}_{j=0}^{2m+1}$, $m\in\{1,2\}$, for large $\omega$} \label{SM:stable_phi}
$ $\\ For $m=1$,
\[
\phi^\omega_{0,1}(t)\;=\;\phi^\omega_{3,1}(1-t),
\qquad
\phi^\omega_{1,1}(t)\;=\;\phi^{\omega}_{2,1}(1-t),
\]
\[
\phi^\omega_{2,1}(t)\;=\;\frac{N\phi^\omega_{2,1}(t)}{D\phi^\omega_{2,1}},
\qquad
\phi^\omega_{3,1}(t)\;=\;\frac{e^{-2\omega t}\;+\;2\omega t e^{-\omega t}\;-\;1}{e^{-2\omega }\;+\;2\omega e^{-\omega}\;-\;1}\;e^{\omega(t-1)},
\]
where
\begin{multline*}
	N\phi^\omega_{2,1}(t)\;=\;
	\left(\frac{1}{\omega}+ t\right){e}^{-3\omega}
	\;-\;\frac{1}{\omega}{e}^{\omega\left(t-3\right)}
	\;-\;\left(\frac{1}{\omega}+1\right){e}^{-\omega\left(t+2\right)}
	\;-\;\left(\frac{1}{\omega}+3t-2\right){e}^{-2\omega}\\  \\
	\;+\;\left(\frac{2}{\omega}-1\right){e}^{\omega\left(t-2\right)}
	\;+\;\left(\frac{2}{\omega}+1\right){e}^{-\omega\left(t+1\right)} \\ \\
	\;-\;\left(\frac{1}{\omega}-3t+2\right){e}^{-\omega}	 
	\;-\;\left(\frac{1}{\omega}-1\right){e}^{\omega\left(t-1\right)}
	\;-\;\frac{1}{\omega}{e}^{-t\omega}	
	\;+\;\frac{1}{\omega}- t,
\end{multline*}
\begin{multline*}
	D\phi^\omega_{2,1}\;=\;\left(\frac{2}{\omega}+1\right)e^{-3\omega}\;-\;\left(\frac{2}{\omega}-5-2\omega\right)e^{-2\omega}\;-\;\left(\frac{2}{\omega}+5-2\omega\right)e^{-\omega}\;+\;\frac{2}{\omega}-1.
\end{multline*}
For $m=2$,
\[
\phi^\omega_{0,2}(t)\;=\;\phi^\omega_{5,2}(1-t),
\qquad
\phi^\omega_{1,2}(t)\;=\;\phi^\omega_{4,2}(1-t),
\qquad
\phi^\omega_{2,2}(t)\;=\;\phi^\omega_{3,2}(1-t),
\]
\medskip
\[
\phi^\omega_{3,2}(t)\;=\;\frac{N\phi^\omega_{3,2}(t)}{D\phi^\omega_{3,2}},
\qquad
\phi^\omega_{4,2}(t)\;=\; \frac{N\phi^\omega_{4,2}(t)}{D\phi^\omega_{4,2}}
\;{e}^{\omega\left(t-1\right)},
\]
\medskip
\[
\phi^\omega_{5,2}(t)\;=\;\frac{{e}^{-4\omega t}-8{e}^{-3\omega t}-12\omega t{e}^{-2\omega t}+8{e}^{-\omega t}-1}{{e}^{-4\omega}-8{e}^{-3\omega}-12\omega{e}^{-2\omega}+8{e}^{-\omega}-1}\;{e}^{2\omega\left(t-1\right)},
\]
where
\begin{multline*}
	N\phi^{\omega}_{3,2}(t)\;=\;	
	\left(\frac{3}{\omega}+2t\right){e}^{-5\omega}
	\;-\;\frac{4}{\omega}{e}^{\omega\left(t-5\right)}
	\;+\;\frac{1}{\omega}{e}^{\omega\left(2t-5\right)}
	\;-\;4\left(\frac{2}{\omega}+1\right){e}^{-\omega\left(t+4\right)} \\ \\
	\;+\;\left(\frac{9}{\omega}-2(5t-6)\right){e}^{-4\omega}
	\;-\;4\left(\frac{1}{\omega}+3\right){e}^{\omega\left(t-4\right)}
	\;+\;\left(\frac{3}{\omega}+4\right){e}^{2\omega\left(t-2\right)}\\ \\
	\;+\;\left(\frac{5}{\omega}+2\right){e}^{-\omega\left(2t+3\right)}
	\;+\;4\left(\frac{1}{\omega}-1\right){e}^{-\omega\left(t+3\right)}	
	\;-\;4\left(\frac{3}{\omega}-5t\right){e}^{-3\omega}\\ \\
	+4\left(\frac{3}{\omega}+1\right){e}^{\omega\left(t-3\right)}
	-\left(\frac{9}{\omega}+2\right){e}^{\omega\left(2t-3\right)} 
	-\left(\frac{9}{\omega}-2\right){e}^{-2\omega\left(t+1\right)}
	+4\left(\frac{3}{\omega}-1\right){e}^{-\omega\left(t+2\right)}	\\ \\
	\;-\;4\left(\frac{3}{\omega}+5t\right){e}^{-2\omega}	
	\;+\;4\left(\frac{1}{\omega}+1\right){e}^{\omega\left(t-2\right)} 
	\;+\;\left(\frac{5}{\omega}-2\right){e}^{2\omega\left(t-1\right)} \\ \\
	\;+\;\left(\frac{3}{\omega}-4\right){e}^{-\omega\left(2t+1\right)}	
	\;-\;4\left(\frac{1}{\omega}-3\right){e}^{-\omega\left(t+1\right)}
	\;+\;\left(\frac{9}{\omega}+2(5t-6)\right){e}^{-\omega}\\ \\
	\;-\;4\left(\frac{2}{\omega}-1\right){e}^{\omega\left(t-1\right)}
	\;+\;\frac{1}{\omega}{e}^{-2\omega t}
	\;-\;\frac{4}{\omega}{e}^{-\omega t}
	\;+\;\frac{3}{\omega}-2 t,	
\end{multline*}
\begin{multline*}
	D\phi^{\omega}_{3,2}\;=\;
	2\left[
	\left(\frac{3}{\omega}+1\right){e}^{-5\omega}
	+\left(\frac{27}{\omega}+31+6\omega\right){e}^{-4\omega} 
	-2\left(\frac{15}{\omega}-23-15\omega\right){e}^{-3\omega}\right. \\ \\
	\left.\qquad-2\left(\frac{15}{\omega}+23-15\omega\right){e}^{-2\omega}	
	+\left(\frac{27}{\omega}-31+6\omega\right){e}^{-\omega}
	+\frac{3}{\omega}-1
	\right],
\end{multline*}
\begin{multline*}
	N\phi^{\omega}_{4,2}(t)\;=\; 2{e}^{-\omega\left(2t+5\right)}
	\;+\;3(1+2\omega t){e}^{-\omega\left(t+5\right)}
	\;-\;6{e}^{-5\omega}
	\;+\;{e}^{\omega\left(t-5\right)} 
	\;-\;2{e}^{-\omega\left(3t+4\right)}\\ 
	\;-\;2{e}^{-2\omega\left(t+2\right)}
	\;-\;3(9+10\omega t){e}^{-\omega\left(t+4\right)}
	\;+\;38{e}^{-4\omega} 
	\;-\;7{e}^{\omega\left(t-4\right)}	\\ 
	\;-\;(1+6\omega){e}^{-3\omega\left(t+1\right)}
	\;+\;24(1+\omega){e}^{-\omega\left(2t+3\right)} 
	\;+\;12(2 +\omega(5t-3)){e}^{-\omega\left(t+3\right)}\\ 
	\;-\;8(7-3\omega){e}^{-3\omega}
	\;+\;3(3-2\omega){e}^{\omega\left(t-3\right)} 
	\;+\;3(3+2\omega){e}^{-\omega\left(3t+2\right)}
	\;-\;8(7+3\omega){e}^{-2\omega\left(t+1\right)}\\ 
	\;+\;12(2-\omega(5t-3)){e}^{-\omega\left(t+2\right)} 
	\;+\;24(1-\omega){e}^{-2\omega}	
	\;-\;(1-6\omega){e}^{\omega\left(t-2\right)}\\ 
	\;-\;7{e}^{-\omega\left(3t+1\right)}
	\;+\;38{e}^{-\omega\left(2t+1\right)}
	\;-\;3(9-10\omega t){e}^{-\omega\left(t+1\right)}	
	\;-\;2{e}^{-\omega}\\ 
	\;-\;2{e}^{\omega\left(t-1\right)} 
	\;+\;{e}^{-3\omega t}
	\;-6{e}^{-2\omega t}
	\;+\;3(1-2\omega t){e}^{-\omega t}
	\;+\;2,
\end{multline*}
\begin{multline*}
	D\phi^{\omega}_{4,2}\;=\;
	{e}^{-7\omega}
	\;+\;(1+6\omega){e}^{-6\omega}	
	\;-\;27(3+2\omega){e}^{-5\omega}
	\;+\;(79-156\omega-72\omega^2){e}^{-4\omega} \\ 
	\;+\;(79+156\omega-72\omega^2){e}^{-3\omega}
	\;-\;27(3-2\omega){e}^{-2\omega}
	\;+\;(1-6\omega){e}^{-\omega}
	\;+\;1.
\end{multline*}

\section{Stable expressions of $\{\tau^\omega_{j,m}(t)\}_{j=0}^{2m}$, $m\in\{1,2\}$, for large $\omega$} \label{SM:stable_tau}
$ $\\For $m=1$,
\[
\tau^{\omega}_{0,1}(t)\;=\; 	
\frac{{e}^{\omega \,\left(t-2\right)}-2\,\left(t-1\right)\,\omega \,{e}^{-\omega}-{e}^{-t\,\omega }}{{\left(t-1\right)}^2\,\left({e}^{-2\,\omega }+2\,\omega \,{e}^{-\omega }-1\right)} ,\qquad \tau^\omega_{1,1}(t)\;=\;\frac{N\tau^\omega_{1,1}(t)}{D\tau^\omega_{1,1}(t)},
\]
\[
\tau^{\omega}_{2,1}(t)\;=\; 	
\frac{
	t^2\,{e}^{-2\,\omega }-{e}^{-\omega \,\left(t+1\right)}+2\,t\,\left(t-1\right)\,\omega \,{e}^{-\omega }+{e}^{\omega \,\left(t-1\right)}-t^2
}{
	t^2\,\left({e}^{-2\,\omega }+2\,\omega \,{e}^{-\omega }-1\right)
} ,
\]
where
\begin{multline*}
	N\tau^{\omega}_{1,1}(t)\;=\;
	\left(\;\left(\frac{2}{\omega }+1\right)\,t^2-\left(\frac{4}{\omega }+1\right)\,t+\frac{1}{\omega }\;\right)\,{\mathrm{e}}^{-2\,\omega } 
	\;-\;\frac{1}{\omega }\,{\mathrm{e}}^{\omega \,\left(t-1\right)} \\ \\
	\;-\;\frac{1}{\omega }\,{\mathrm{e}}^{\omega \,\left(t-2\right)} 
	\;+\;2t(t-1)\,{\mathrm{e}}^{-\omega }
	\;+\;\frac{1}{\omega }\,{\mathrm{e}}^{-t\,\omega }\\ \\
	\;+\;\frac{1}{\omega }\,{\mathrm{e}}^{-\omega \,\left(t+1\right)} 
	\;-\;\left(\frac{2}{\omega }-1\right)\,t^2+\left(\frac{4}{\omega }-1\right)\,t-\frac{1}{\omega },
\end{multline*}
\[
D\tau^{\omega}_{1,1}(t)\;=\; 
2\,t\,\left(t-1\right)\,\left({e}^{-\omega}+1\right)\,\left(\left(\frac{2}{\omega}+1\right){e}^{-\omega }-\frac{2}{\omega}+1\right).
\]
For $m=2$, $\tau^\omega_{j,2}(t)=N\tau^\omega_{j,2}(t)/D\tau^\omega_{j,2}(t)$, $j=0,\dots,4$, where
\[
N\tau^{\omega}_{0,2}(t)\;=\; 	
-\;{e}^{2\,\omega \,\left(t-2\right)}
\;+\;8\,{e}^{\omega \,\left(t-3\right)}
\;-\;12\,\left(t-1\right)\,\omega \,{e}^{-2\,\omega }
\;-\;8\,{e}^{-\omega \,\left(t+1\right)}
\;+\;{e}^{-2\,t\,\omega },
\]
\[
D\tau^{\omega}_{0,2}(t)\;=\; 	
{\left(t-1\right)}^4\left(\;
\;-\;{e}^{-4\,\omega }
\;+\;8\,{e}^{-3\,\omega }
\;+\;12\,\omega \,{e}^{-2\,\omega }
\;-\;8\,{e}^{-\omega } 
\;+\;1
\;\right),
\]

\begin{multline*}
	N\tau^{\omega}_{1,2}(t)\;=\; 	
	{\left(t-1\right)}^4\,{e}^{-3\,\omega }
	\;-\;2\,{e}^{\omega \,\left(t-3\right)}
	\;+\;{e}^{\omega \,\left(2\,t-3\right)} \\ 
	\;+\;\,3\left(\;3t^4-12\,t^3+18\,t^2-12\,t+2
	+2t\omega\left(t^3-4\,t^2+6\,t-3\right)\;\right)\,{e}^{-2\,\omega } \\ 
	\;-\;6\,{e}^{\omega \,\left(t-2\right)}
	\;+\;6\,{e}^{-\omega \,\left(t+1\right)} \\ 
	\;-\;3\left(\;3\,t^4-12\,t^3+18\,t^2-12\,t+2
	-2t\omega\left(t^3-4\,t^2+6\,t-3\right)\;\right)\,{e}^{-\omega } \\ 	
	\;-\;{e}^{-2\,t\,\omega }
	\;+\;2\,{e}^{-t\,\omega }
	\;-\;{\left(t-1\right)}^4 
	,
\end{multline*}
\[
D\tau^{\omega}_{1,2}(t)\;=\; 	
4t{\left(t-1\right)}^3\left(\;{e}^{-3\omega }
\;+\;3\left(2\omega +3\right){e}^{-2\omega }
\;+\;3\left(2\omega -3\right) {e}^{-\omega }
\;-\;1
\;\right) ,
\]
\begin{multline*}
	N\tau^{\omega}_{2,2}(t)\;=\; 	
	-\;\left(\;\frac{3(6\,t^4-16\,t^3+12\,t^2-1)}{\omega }\;+\;2t(3\,t^3-8\,t^2+6\,t-1)\;\right){e}^{-2\,\omega }\\ \\
	\;+\;\frac{4}{\omega }{e}^{\omega \,\left(t-2\right)}
	\;-\;\frac{1}{\omega }{e}^{2\,\omega \,\left(t-1\right)}
	\;-\;\frac{4}{\omega }{e}^{-\omega \,\left(t+1\right)} \\ \\
	\;+\;8\,t\,\left(3\,t^3-8\,t^2+6\,t-1\right)\,{e}^{-\omega } \\ \\
	\;+\;\frac{4}{\omega }{e}^{\omega \,\left(t-1\right)}
	\;+\;\frac{1}{\omega }{e}^{-2\,t\,\omega }
	\;-\;\frac{4}{\omega }{e}^{-t\,\omega } \\ \\
	\;-\;\frac{3(6\,t^4-16\,t^3+12\,t^2-1)}{\omega }
	\;+\;2t(3\,t^3-8\,t^2+6\,t-1),
\end{multline*}
\[
D\tau^{\omega}_{2,2}(t)\;=\; 12\,t^2\,{\left(t-1\right)}^2\,	\left(\;\left(\frac{3}{\omega}+1\right){e}^{-2\,\omega }
\;+\;4\,{e}^{-\omega }\;-\;\frac{3}{\omega }\;+\;1\; \right) ,
\]

\begin{multline*}
	N\tau^{\omega}_{3,2}(t)\;=\; 	
	t^3\,\left(3\,t-4\right)\,{e}^{-3\,\omega } 
	\;+\;2\,{e}^{-\omega \,\left(t+2\right)} \\ 
	\;+\;3\left(2\,t\,\omega -8\,t^3\,\omega +6\,t^4\,\omega -12\,t^3+9\,t^4+1\right){e}^{-2\,\omega }\\ 
	\;-\;6\,{e}^{\omega \,\left(t-2\right)} 
	\;+\;{e}^{2\,\omega \,\left(t-1\right)} 
	\;-\;{e}^{-\omega \,\left(2\,t+1\right)} 
	\;+\;6\,{e}^{-\omega \,\left(t+1\right)} \\ 
	\;+\;3\left(2\,t\,\omega -8\,t^3\,\omega +6\,t^4\,\omega +12\,t^3-9\,t^4-1\right){e}^{-\omega } \\ 
	\;-\;2\,{e}^{\omega \,\left(t-1\right)} 
	\;-\;t^3\,\left(3\,t-4\right),
\end{multline*}
\[
D\tau^{\omega}_{3,2}(t)\;=\; 	
4\,t^3\,\left(t-1\right)\,\left(\;{e}^{-3\,\omega }\;+\;3\left(2\,\omega +3\right){e}^{-2\,\omega }\;+\;3\left(2\,\omega -3\right){e}^{-\omega }\;-\;1\;\right) ,
\]

\begin{multline*}
	N\tau^{\omega}_{4,2}(t)\;=\; 	
	-\;t^4\,{e}^{-4\,\omega }
	\;+\;8\,t^4\,{e}^{-3\,\omega }
	\;+\;{e}^{-2\,\omega \,\left(t+1\right)}
	\;-\;8\,{e}^{-\omega \,\left(t+2\right)}\\ 
	\;+\;12\,t\,\left(t^3-1\right)\,\omega \,{e}^{-2\,\omega } 
	\;+\;8\,{e}^{\omega \,\left(t-2\right)}
	\;-\;{e}^{2\,\omega \,\left(t-1\right)}
	\;-\;8\,t^4\,{e}^{-\omega }
	\;+\;t^4,
\end{multline*}
\[
D\tau^{\omega}_{4,2}(t)\;=\; 	
t^4\left(\;-{e}^{-4\,\omega }\;+\;8\,{e}^{-3\,\omega }\;+\;12\,\omega \,{e}^{-2\,\omega }\;-\;8\,{e}^{-\omega }\;+\;1\;\right) .
\]


\section*{Acknowledgments}
This work has been accomplished within the \enquote{Research ITalian network on Approximation} (RITA) and the TAA-UMI group.

\bibliographystyle{siamplain}

\end{document}